\newtheorem{theorem}{Theorem}[section]
\newtheorem{lemma}[theorem]{Lemma}
\newtheorem{proposition}[theorem]{\revision{P}roposition}
\theoremstyle{definition}
\newtheorem{definition}[theorem]{Definition}
\theoremstyle{remark}
\newtheorem{remark}[theorem]{Remark}
\numberwithin{equation}{section}
\newcommand{\SlopeTriangle}[6]
{

    \pgfplotsextra
    {
        \pgfkeysgetvalue{/pgfplots/xmin}{\xmin}
        \pgfkeysgetvalue{/pgfplots/xmax}{\xmax}
        \pgfkeysgetvalue{/pgfplots/ymin}{\ymin}
        \pgfkeysgetvalue{/pgfplots/ymax}{\ymax}

        \pgfmathsetmacro{\xArel}{#1}
        \pgfmathsetmacro{\yArel}{#3}
        \pgfmathsetmacro{\xBrel}{#1-#2}
        \pgfmathsetmacro{\yBrel}{\yArel}
        \pgfmathsetmacro{\xCrel}{\xArel}

        \pgfmathsetmacro{\lnxB}{\xmin*(1-(#1-#2))+\xmax*(#1-#2)} 
        \pgfmathsetmacro{\lnxA}{\xmin*(1-#1)+\xmax*#1} 
        \pgfmathsetmacro{\lnyA}{\ymin*(1-#3)+\ymax*#3} 
        \pgfmathsetmacro{\lnyC}{\lnyA+#4*(\lnxA-\lnxB)}
        \pgfmathsetmacro{\yCrel}{\lnyC-\ymin)/(\ymax-\ymin)} 

        \coordinate (A) at (rel axis cs:\xArel,\yArel);
        \coordinate (B) at (rel axis cs:\xBrel,\yBrel);
        \coordinate (C) at (rel axis cs:\xCrel,\yCrel);

        \draw[#6]   (A)-- node[anchor=north] {#5}
                    (B)--
                    (C)--
                    cycle;
    }
}
\newcommand{\ee}{\boldsymbol A}
\newcommand{\hh}{\boldsymbol h}
\newcommand{\bzero}{\boldsymbol 0}
\newcommand{\jj}{\boldsymbol j}
\newcommand{\sig}{\boldsymbol \sigma}
\newcommand{\ch}{\boldsymbol \chi}
\newcommand{\vv}{\boldsymbol v}
\newcommand{\tvv}{\widetilde{\vv}}
\newcommand{\ww}{\boldsymbol w}
\newcommand{\bvf}{\boldsymbol \varphi}
\newcommand{\VVh}{\VV_{{\!h}}}
\newcommand{\rr}{\boldsymbol r}
\newcommand{\trr}{\widetilde \rr}
\newcommand{\xxi}{\boldsymbol \xi}
\newcommand{\zzeta}{\boldsymbol \zeta}
\newcommand{\xx}{\boldsymbol x}
\newcommand{\yy}{\boldsymbol y}
\newcommand{\nn}{\boldsymbol n}
\newcommand{\ttau}{{\boldsymbol \tau}}
\newcommand{\uu}{\boldsymbol u}
\newcommand{\LL}{\boldsymbol L}
\newcommand{\HH}{\boldsymbol H}
\newcommand{\tU}{\widetilde U}
\newcommand{\tuu}{\widetilde \uu}
\newcommand{\tG}{\widetilde \Gamma}
\newcommand{\tq}{\widetilde q}
\newcommand{\ccurl}{\boldsymbol{\operatorname{curl}}}
\newcommand{\scurl}{\operatorname{curl}}
\newcommand{\ddiv}{\operatorname{div}}
\newcommand{\supp}{\operatorname{supp}}
\newcommand{\conv}{\operatorname{conv}}
\newcommand{\grad}{\boldsymbol \nabla}
\renewcommand{\div}{\grad \cdot}
\newcommand{\curl}{\grad \times}
\newcommand{\VV}{\boldsymbol V}
\newcommand{\TT}{\mathcal T}
\newcommand{\EE}{\mathcal E}
\newcommand{\FF}{\mathcal F}
\newcommand{\EED}{\EE^{\rm D}}
\newcommand{\EEN}{\EE^{\rm N}}
\newcommand{\FFe}{{\FF^\edge}}
\newcommand{\TTe}{{\TT^\edge}}
\newcommand{\FFei}{{\FF_{\rm int}^\edge}}
\newcommand{\FFee}{{\FF_{\rm ext}^\edge}}
\newcommand{\RR}{\boldsymbol{\mathcal R}}
\newcommand{\PP}{\boldsymbol{\mathcal P}}
\newcommand{\NN}{\boldsymbol{\mathcal N}}
\newcommand{\RT}{\boldsymbol{\mathcal{RT}}}
\newcommand{\ppsi}{\boldsymbol \psi}
\newcommand{\pphi}{\boldsymbol \phi}
\newcommand{\oome}{\boldsymbol \omega}
\newcommand{\tpphi}{\widetilde{\pphi}}
\newcommand{\Clift}{C_{\rm L}}
\newcommand{\CPe}{C_{{\rm P},\edge}}
\newcommand{\Cke}{C_{\kappa_\edge}}
\newcommand{\Cconte}{C_{{\rm cont},\edge}}
\newcommand{\Ccont}{C_{\rm cont}}
\newcommand{\CPVe}{C_{{\rm \revision{PFW}},\edge}}
\newcommand{\Cste}{C_{{\rm st},\edge}}
\newcommand{\edge}{e}
\newcommand{\aaa}{\boldsymbol a}
\newcommand{\ome}{{\omega_\edge}}
\newcommand{\ppi}{\boldsymbol \pi}
\newcommand{\osc}{\operatorname{osc}}
\newcommand{\GD}{{\Gamma_{\rm D}}}
\newcommand{\GN}{{\Gamma_{\rm N}}}
\newcommand{\GeD}{{\Gamma_{\rm D}^\edge}}
\newcommand{\GeN}{{\Gamma_{\rm N}^\edge}}
\newcommand{\gD}{{\gamma_{\rm D}}}
\newcommand{\gN}{{\gamma_{\rm N}}}
\newcommand{\LH}{\boldsymbol {\mathcal H}(\Omega,\GD)}
\newcommand{\KK}{\boldsymbol K}
\newcommand{\ttheta}{\boldsymbol \theta}
\newcommand{\vvphi}{\boldsymbol \varphi}
\newcommand{\TTT}{\boldsymbol T}
\newcommand{\JJJ}{\mathbb J}
\newcommand{\adown}{\aaa_{\rm d}}
\newcommand{\aup}{\aaa_{\rm u}}
\newcommand{\Fdown}{F^{\rm d}}
\newcommand{\Fup}{F^{\rm u}}
\newcommand{\Kin} {K_{\rm in}}
\newcommand{\Kout}{K_{\rm out}}
\newcommand{\jump}[1]{\llbracket #1 \rrbracket}
\newcommand{\argmin}[1]{\underset{\substack{#1}}{\operatorname{argmin}}}
\newcommand{\eq}{:=}
\newcommand{\sign}{\operatorname{sign}}
\newcommand{\pp}{{p'}}
\newcommand\ie{i.e.}
\newcommand\cf{cf.}
\newcommand\eg{e.g.}
\newcommand{\bse}{\begin{subequations}}
\newcommand{\ese}{\end{subequations}}
\newcommand{\revision}[1]{#1}
\begin{document}

\title%
[Stable broken {${\boldsymbol H}(\ccurl)$} polynomial extensions \&
$\MakeLowercase{p}$-robust \revision{broken} equilibration]%
{Stable broken {${\boldsymbol H}(\MakeLowercase{\ccurl})$} polynomial extensions\\
and $\MakeLowercase{p}$-robust a posteriori error estimates
\revision{by broken patchwise equilibration} for \revision{the curl--curl problem}$^\star$}

\author{T. Chaumont-Frelet$^{1,2}$}
\author{A. Ern$^{3,4}$}
\author{M. Vohral\'ik$^{4,3}$}
\address{\vspace{-.5cm}}
\address{\noindent \tiny \textup{$^\star$This project has received funding from the European Research Council (ERC) under the European Union’s Horizon 2020}}
\address{\noindent \tiny \textup{\hspace{.2cm}research and innovation program (grant agreement No 647134 GATIPOR).}}
\address{\noindent \tiny \textup{$^1$Inria, 2004 Route des Lucioles, 06902 Valbonne, France}}
\address{\noindent \tiny \textup{$^2$Laboratoire J.A. Dieudonn\'e, Parc Valrose, 28 Avenue Valrose, 06108 Nice Cedex 02, 06000 Nice, France}}
\address{\noindent \tiny \textup{$^3$Universit\'e Paris-Est, CERMICS (ENPC), 6 et 8 av. Blaise Pascal 77455 Marne la Vall\'ee cedex 2, France}}
\address{\noindent \tiny \textup{$^4$Inria, 2 rue Simone Iff, 75589 Paris, France}}

\date{}

\begin{abstract}
We study extensions of piecewise polynomial data prescribed in a patch of tetrahedra sharing
an edge. We show stability in the sense that the minimizers over piecewise polynomial spaces
with prescribed tangential component jumps across faces and prescribed piecewise curl in
elements are subordinate in the broken energy norm to the minimizers over the \revision{broken}
$\HH(\ccurl)$ space with the same prescriptions. Our proofs are constructive and
yield constants independent of the polynomial degree. We then detail the application of this
result to the a posteriori error analysis of \revision{the curl--curl problem} discretized with
N\'ed\'elec finite elements of arbitrary order. The resulting estimators are \revision{reliable,}
locally efficient, polynomial-degree-robust, and inexpensive. \revision{They are constructed by a
\revision{broken patchwise} equilibration which, in particular, does not produce a globally
$\HH(\ccurl)$-conforming flux. The equilibration is only related to} edge patches
and can be realized without solutions of patch problems by a sweep through tetrahedra
around every mesh edge. The error estimates become guaranteed when the regularity pick-up
constant is explicitly known. Numerical experiments illustrate the theoretical findings.

\noindent
{\sc Key Words.} A posteriori error estimates; Finite element methods;
\revision{Electromagnetics}; High order methods.

\noindent
{\sc AMS subject classification.} Primary 65N30, 78M10, 65N15.
\end{abstract}

\maketitle

\section{Introduction}
\label{sec_int}

The so-called N\'ed\'elec or also edge element spaces of~\cite{Ned_mix_R_3_80} form,
on meshes consisting of tetrahedra, the most natural piecewise polynomial subspace of
the space $\HH(\ccurl)$ composed of square-integrable fields with square-integrable weak curl.
They are instrumental in numerous applications in link with electromagnetism,
see for example~\cite{Ass_Ciar_Lab_foundat_electr_18,Boss_elctr_98,Hiptmair_acta_numer_2002,Monk_FEs_Maxwell_03}.
The goal of this paper is to study two different but connected questions related to these spaces.

\subsection{Stable broken {\em H}$(\ccurl)$ polynomial extensions}

Polynomial extension operators are an essential tool in numerical analysis involving
N\'ed\'elec spaces, in particular in the case of high-order discretizations.
Let $K$ be a tetrahedron. Then, given a boundary datum in the form of a suitable polynomial
on each face of $K$, satisfying some compatibility conditions,
a {\em polynomial extension} operator constructs a curl-free polynomial in the interior of
the tetrahedron $K$ whose tangential trace fits the {\em boundary datum} and which is stable
with respect to the datum in the intrinsic norm. Such an operator was derived
in~\cite{Demk_Gop_Sch_ext_II_09}, as a part of equivalent developments in the
$H^1$ and $\HH(\ddiv)$ spaces respectively in~\cite{Demk_Gop_Sch_ext_I_09}
and~\cite{Demk_Gop_Sch_ext_III_12}, see also~\cite{MunSol_pol_lift_97} and the references therein.
An important achievement extending in a similar stable way a given polynomial {\em volume datum}
to a polynomial with curl given by this datum in a single simplex, along with a similar result
in the $H^1$ and $\HH(\ddiv)$ settings, was presented in~\cite{Cost_McInt_Bog_Poinc_10}.

The above results were then combined together and extended
from a single simplex to a patch of simplices sharing the given vertex in several cases:
in $\HH(\ddiv)$ in two space dimensions in~\cite{Brae_Pill_Sch_p_rob_09} and in $H^1$ and
$\HH(\ddiv)$ in three space dimensions in~\cite{Ern_Voh_p_rob_3D_20}. These results have
important applications to a posteriori analysis but also
to localization and optimal $hp$ estimates in a priori analysis,
see~\cite{Ern_Gud_Sme_Voh_loc_glob_div_21}. To the best of our knowledge,
a similar patchwise result in the $\HH(\ccurl)$ setting is not available yet,
and it is our goal to establish it here. We achieve it in our first main result,
Theorem~\ref{theorem_stability}, see also the equivalent form in
Proposition~\ref{prop_stability_patch} \revision{and the construction in Theorem~\ref{thm_sweep}}.

Let $\TTe$ be a patch of tetrahedra sharing a given edge $\edge$ from a shape-regular mesh
$\TT_h$ and let $\ome$ be the corresponding patch subdomain. Let $p\ge0$ be a polynomial
degree. \revision{Let $\jj_p \in \RT_p(\TTe) \cap \HH(\ddiv,\ome)$ with $\div \jj_p = 0$ be
a divergence-free Raviart--Thomas field, and let $\ch_p$ be in the broken N\'ed\'elec space
$\NN_p(\TTe)$.} In this work, we establish that
\begin{equation} \label{eq_BPE}
\min_{\substack{\vv_p \in \NN_p(\TTe) \cap \HH(\ccurl,\ome) \\ \curl \vv_p = \jj_p}}
\|\ch_p - \vv_p\|_\ome
\leq
C
\min_{\substack{\vv \in \HH(\ccurl,\ome) \\ \curl \vv = \jj_p}}
\|\ch_p - \vv\|_\ome,
\end{equation}
which means that the {\em discrete} constrained {\em best-approximation error} in the patch
is subordinate to the {\em continuous} constrained best-approximation error up to a
constant $C$. Importantly, $C$ only depends on the shape-regularity of the edge patch
and does {\em not depend} on the {\em polynomial degree $p$} under consideration.
Our proofs are constructive, which has a particular application in a posteriori error
analysis, as we discuss now.

\subsection{$p$-robust a posteriori error estimates \revision{by broken patchwise equilibration}
for \revision{the curl--curl problem}}
\label{sec_a_post_intr}

Let $\Omega \subset \mathbb R^3$ be a Lipschitz polyhedral domain with unit outward normal $\nn$.
Let $\GD,\GN$ be two disjoint, open, possibly empty subsets
of $\partial \Omega$ such that $\partial \Omega = \overline \GD \cup \overline \GN$.
Given a divergence-free field $\jj: \Omega \to \mathbb R^3$ \revision{with zero normal trace on $\GN$},
\revision{the curl--curl problem} amounts to seeking a field $\ee: \Omega \to \mathbb R^3$
satisfying
\begin{subequations}
\label{eq_maxwell_strong}
\begin{alignat}{2}
\label{eq_maxwell_strong_volume}
&\curl \curl \ee = \jj, \quad \div \ee = 0,&\qquad&\text{in $\Omega$}, \\
&\ee \times \nn = \bzero,&\qquad&\text{on $\GD$},\\
&(\curl \ee) \times \nn = \bzero,\quad \ee \cdot \nn = 0,&\qquad&\text{on $\GN$}.
\end{alignat}
Note that $\ee \times \nn = 0$ implies that $(\curl \ee) \cdot \nn=0$ on $\GD$.
\revision{When $\Omega$ is not simply connected and/or when $\GD$ is not connected,
the additional conditions
\begin{equation}
\label{eq_maxwell_cohomology}
(\ee,\ttheta)_\Omega = 0,
\qquad
(\jj,\ttheta)_\Omega = 0,
\qquad \forall \ttheta \in \LH
\end{equation}
must be added in order to ensure existence and uniqueness of a solution
to~\eqref{eq_maxwell_strong}, where $\LH$ is the finite-dimensional ``cohomology''
space associated with $\Omega$ and the partition of its boundary (see Section \ref{sec_notat}).}
\end{subequations}
\revision{The boundary-value problem \eqref{eq_maxwell_strong} appears immediately
in this form in magnetostatics. In this case, $\jj$ and $\ee$ respectively represent a (known)
current density and the (unknown) associated magnetic vector potential, while the key quantity
of interest is the magnetic field $\hh \eq \curl \ee$.
We refer the reader to \cite{Ass_Ciar_Lab_foundat_electr_18,Boss_elctr_98,Hiptmair_acta_numer_2002,Monk_FEs_Maxwell_03}
for reviews of models considered in computational electromagnetism.}

In the rest of the introduction, we assume for simplicity that $\GD=\partial\Omega$
(so that the boundary conditions reduce to $\ee \times \nn = \bzero$ on $\partial\Omega$)
and that $\jj$ is a piecewise polynomial in the Raviart--Thomas space,
$\jj \in \RT_p(\TT_h) \cap \HH(\ddiv,\Omega)$, $p \geq 0$. Let
$\ee_h \in \NN_p(\TT_h) \cap \HH_0(\ccurl,\Omega)$ be a numerical approximation to $\ee$
in the N\'ed\'elec space. Then, the Prager--Synge equality~\cite{Prag_Syng_47}, \cf, \eg,
\cite[equation~(3.4)]{Rep_a_post_Maxw_07} or~\cite[Theorem~10]{Braess_Scho_a_post_edge_08},
implies that
\begin{equation}
\label{eq_PS}
\|\curl(\ee - \ee_h)\|_\Omega \leq \min_{\substack{
\hh_h \in \NN_p(\TT_h) \cap \HH(\ccurl,\Omega)
\\
\curl \hh_h = \jj}}
\|\hh_h - \curl \ee_h\|_\Omega.
\end{equation}
Bounds such as~\eqref{eq_PS} have been used in, \eg,
\cite{%
Creus_Men_Nic_Pir_Tit_guar_har_19,%
Creus_Nic_Tit_guar_Maxw_17,%
Han_a_post_Maxw_08,%
Neit_Rep_a_post_Maxw_10},
see also the references therein.

The estimate~\eqref{eq_PS} leads to a guaranteed and sharp upper bound. Unfortunately,
as written, it involves a global minimization over $\NN_p(\TT_h) \cap \HH(\ccurl,\Omega)$,
and is consequently too expensive in practical computations.
Of course, a further upper bound follows from~\eqref{eq_PS} for {\em any}
$\hh_h \in \NN_p(\TT_h) \cap \HH(\ccurl,\Omega)$
such that $\curl \hh_h = \jj$. At this stage, though, it is not clear how to find an
{\em inexpensive local} way of constructing a suitable field $\hh_h$, called an
{\em equilibrated flux}. A proposition for the lowest degree $p=0$ was
given in~\cite{Braess_Scho_a_post_edge_08}, but suggestions
for higher-order cases were not available until very recently
in~\cite{Ged_Gee_Per_a_post_Maxw_19,Licht_FEEC_a_post_H_curl_19}.
In particular, the authors in~\cite{Ged_Gee_Per_a_post_Maxw_19} also prove efficiency,
\ie, they devise a field $\hh_h^* \in \NN_p(\TT_h) \cap \HH(\ccurl,\Omega)$ such that,
up to a generic constant $C$ independent of the mesh size $h$ but possibly
depending on the polynomial degree $p$,
\begin{equation} \label{eq_eff}
    \|\hh_h^* - \curl \ee_h\|_{\Omega} \leq C \|\curl(\ee - \ee_h)\|_{\Omega},
\end{equation}
as well as a local version of~\eqref{eq_eff}.
Numerical experiments in~\cite{Ged_Gee_Per_a_post_Maxw_19} reveal very good effectivity indices,
also for high polynomial degrees $p$.

A number of a posteriori error estimates that are {\em polynomial-degree robust}, \ie,
where no generic constant depends on $p$, were obtained recently. For equilibrations
(reconstructions) in the $\HH(\ddiv)$ setting in two space dimensions, they were first
obtained in~\cite{Brae_Pill_Sch_p_rob_09}. Later, they were extended to the $H^1$ setting
in two space dimensions in~\cite{Ern_Voh_p_rob_15} and to both $H^1$ and $\HH(\ddiv)$ settings
in three space dimensions in~\cite{Ern_Voh_p_rob_3D_20}. Applications to problems with arbitrarily
jumping diffusion coefficients, second-order eigenvalue problems, the Stokes problem, linear
elasticity, or the heat equation are reviewed in~\cite{Ern_Voh_p_rob_3D_20}.
In the $\HH(\ccurl)$ setting, with application to
\revision{the curl--curl problem}~\eqref{eq_maxwell_strong}, however, to the best of
our knowledge, such a result was missing\footnote{We have learned very recently that a
modification of~\cite{Ged_Gee_Per_a_post_Maxw_19} can lead to a polynomial-degree-robust
error estimate, see~\cite{Ged_Gee_Per_Sch_post_Maxw_20}.}. It is our goal to establish it here,
and we do so in our second main result, Theorem~\ref{theorem_aposteriori}.

Our upper bound in Theorem~\ref{theorem_aposteriori} actually does {\em not derive}
from the Prager--Synge equality to take the form~\eqref{eq_PS}, since we do not construct
an equilibrated flux $\hh_h^* \in \NN_p(\TT_h) \cap \HH(\ccurl,\Omega)$. We instead perform a
{\em \revision{broken patchwise} equilibration} producing locally on each edge patch $\TTe$
a piecewise polynomial $\hh_h^{\edge} \in \NN_p(\TTe) \cap \HH(\ccurl,\ome)$ such that
$\curl \hh_h^{\edge} = \jj$.  Consequently, our error estimate rather takes the form
\begin{equation} \label{eq_up_intr}
\|\curl(\ee - \ee_h)\|_\Omega
\leq
\sqrt{6} \Clift \Ccont \left (\sum_{\edge \in \EE_h}
\|\hh_h^{\edge} - \curl \ee_h\|_{\ome}^2 \right )^{1/2}.
\end{equation}
We obtain each local contribution $\hh_h^{\edge}$ in a single-stage procedure,
in contrast to the three-stage procedure of~\cite{Ged_Gee_Per_a_post_Maxw_19}.
Our \revision{broken patchwise} equilibration is also rather inexpensive,
since the edge patches are smaller than the usual vertex patches employed 
in~\cite{Braess_Scho_a_post_edge_08, Ged_Gee_Per_a_post_Maxw_19}.
Moreover, we can either solve the patch
problems, see~\eqref{eq_definition_estimator_2}, or replace them by a {\em sequential sweep}
through tetrahedra sharing the given edge $e$, see~\eqref{eq_definition_estimator_sweep_2}.
This second option yields \revision{a cheaper procedure where merely elementwise, in place of
patchwise, problems are to be solved and even delivers} a {\em fully explicit} a posteriori
error estimate in the {\em lowest-order} setting $p=0$. The price we pay for these
advantages is the emergence of the constant $\sqrt{6} \Clift \Ccont$ in our upper
bound~\eqref{eq_up_intr}; here $\Ccont$ is fully computable, only depends on the mesh
shape-regularity, and takes values around 10 for usual meshes, whereas $\Clift$ only depends on
the shape of the domain $\Omega$ \revision{and boundaries $\GD$ and $\GN$}, with in particular
$\Clift = 1$ whenever $\Omega$ is convex. Crucially, our error estimates are
{\em locally efficient} and polynomial-degree robust in that
\begin{equation} \label{eq_low_intr}
    \|\hh_h^{\edge} - \curl \ee_h\|_{\ome} \leq C \|\curl(\ee - \ee_h)\|_{\ome}
\end{equation}
for all edges $\edge$, where the constant $C$ only depends on the shape-regularity of the
mesh, as an immediate application of our first main result
in Theorem~\ref{theorem_stability}. It is worth noting that the lower bound~\eqref{eq_low_intr}
is completely local to the edge patches $\ome$ and does not comprise any neighborhood.

\subsection{Organization of this contribution}

The rest of this contribution is organised as follows.
In Section~\ref{sec_not}, we recall the functional spaces,
state a weak formulation of problem~\eqref{eq_maxwell_strong},
describe the finite-dimensional Lagrange, N\'ed\'elec, and Raviart--Thomas spaces,
and introduce the numerical discretization of~\eqref{eq_maxwell_strong}.
Our two main results, Theorem~\ref{theorem_stability}
\revision{(together with its sequential form in Theorem~\ref{thm_sweep})}
and Theorem~\ref{theorem_aposteriori}, are formulated and discussed in Section~\ref{sec_main_res}.
Section~\ref{sec_num} presents a numerical illustration of our a posteriori error estimates for
\revision{curl--curl problem}~\eqref{eq_maxwell_strong}. Sections~\ref{sec_proof_a_post}
and~\ref{sec_proof_stability} are then dedicated to the proofs of our two main results.
\revision{Finally, Appendix~\ref{appendix_weber} establishes an auxiliary result of independent
interest: a Poincar\'e-like inequality using the curl of divergence-free fields in an edge patch.}

\section{\revision{Curl--curl problem} and N\'ed\'elec finite element discretization}
\label{sec_not}

\subsection{Basic notation}
\label{sec_notat}

Consider a Lipschitz polyhedral subdomain $\omega \subseteq \Omega$. We denote by
$H^1(\omega)$ the space of scalar-valued $L^2(\omega)$ functions with
$\LL^2(\omega)$ weak gradient, $\HH(\ccurl,\omega)$ the space of vector-valued $\LL^2(\omega)$
fields with $\LL^2(\omega)$ weak curl, and $\HH(\ddiv,\omega)$ the space of vector-valued
$\LL^2(\omega)$ fields with $L^2(\omega)$ weak divergence. Below, we use the notation
$({\cdot},{\cdot})_\omega$ for the $L^2(\omega)$ or $\LL^2(\omega)$ scalar product and
$\|{\cdot}\|_\omega$ for the associated norm.
$L^\infty(\omega)$ and $\LL^\infty(\omega)$ are the spaces of essentially bounded
functions with norm $\|{\cdot}\|_{\infty,\omega}$.

Let $\HH^1(\omega) \eq \{\vv \in \LL^2(\omega)| \, v_i \in H^1(\omega), \, i=1, 2, 3\}$.
Let $\gD$, $\gN$ be two disjoint, open, possibly empty subsets of $\partial \omega$ such that
$\partial \omega = \overline \gD \cup \overline \gN$.
Then $H^1_\gD(\omega) \eq \{v \in H^1(\omega)| \, v=0$ on $\gD\}$
is the subspace of $H^1(\omega)$ formed by functions vanishing on $\gD$ in the sense of traces.
Furthermore, $\HH_\gD(\ccurl,\omega)$ is the
subspace of $\HH(\ccurl,\omega)$ composed of fields with vanishing tangential trace on
$\gD$, $\HH_\gD(\ccurl,\omega) \eq \{\vv \in \HH(\ccurl,\omega)$ such that
$(\curl \vv, \bvf)_\omega - (\vv, \curl \bvf)_\omega = 0$ for all functions
$\bvf \in \HH^1(\omega)$ such that $\bvf \times \nn_{\omega} = \bzero$ on
$\partial \omega \setminus \gD\}$, where $\nn_{\omega}$
is the unit outward normal to $\omega$.
Similarly, $\HH_\gN(\ddiv,\omega)$ is the subspace of $\HH(\ddiv,\omega)$ composed of fields
with vanishing normal trace on $\gN$, $\HH_\gN(\ddiv,\omega) \eq \{\vv \in \HH(\ddiv,\omega)$
such that $(\div \vv, \varphi)_\omega + (\vv, \grad \varphi)_\omega = 0$ for all functions
$\varphi \in H^1_\gD(\omega)\}$. We refer the reader to~\cite{Fer_Gil_Maxw_BC_97}
for further insight on vector-valued Sobolev spaces with mixed boundary conditions.

\revision{The space $\KK(\Omega) \eq \{ \vv \in \HH_\GD(\ccurl,\Omega) \; | \;
\curl \vv = \bzero \}$ will also play an important role. When
$\Omega$ is simply connected and $\GD$ is connected, one simply
has $\KK(\Omega) = \grad \left (H^1_\GD(\Omega)\right )$. In the general
case, one has $\KK(\Omega) = \grad \left (H^1_\GD(\Omega)\right ) \oplus \LH$, where
$\LH$ is a finite-dimensional space called the ``cohomology space'' associated
with $\Omega$ and the partition of its boundary \cite{Fer_Gil_Maxw_BC_97}.}

\subsection{\revision{The curl--curl problem}}
\label{sec_Maxw}

If $\jj \in \revision{\KK(\Omega)^\perp}$
\revision{(the orthogonality being understood in $\LL^2(\Omega)$)}, then the
classical weak formulation of~\eqref{eq_maxwell_strong} consists in finding a pair
$(\ee,\revision{\vvphi}) \in \HH_{\GD}(\ccurl,\Omega) \times \revision{\KK(\Omega)}$ such that
\begin{equation}
\label{eq_maxwell_weak}
\left \{
\begin{alignedat}{2}
(\ee,\revision{\ttheta})_\Omega &= 0 &\quad& \forall \revision{\ttheta \in \KK(\Omega)}
\\
(\curl \ee,\curl \vv)_\Omega + (\revision{\vvphi},\vv)_\Omega &= (\jj,\vv)_\Omega &\quad& \forall \vv \in \HH_{\GD}(\ccurl,\Omega).
\end{alignedat}
\right .
\end{equation}
Picking the test function $\vv = \revision{\vvphi}$ in the second equation
of~\eqref{eq_maxwell_weak} shows that $\revision{\vvphi = \bzero}$, so that we actually have
\begin{equation}
\label{eq_maxwell_weak_II}
(\curl \ee,\curl \vv)_\Omega = (\jj,\vv)_\Omega \quad \forall \vv \in \HH_{\GD}(\ccurl,\Omega).
\end{equation}
\revision{Note that when $\Omega$ is simply connected and $\GD$ is connected,
the condition $\jj \in \KK(\Omega)^\perp$ simply means that $\jj$ is
divergence-free with vanishing normal trace on $\GN$, $\jj \in \HH_{\GN}(\ddiv,\Omega)$
with $\div \jj = 0$, and the same constraint follows from the first equation
of~\eqref{eq_maxwell_weak} for $\ee$.}

\subsection{Tetrahedral mesh} \label{sec_mesh}

We consider a matching tetrahedral mesh $\TT_h$ of $\Omega$, \ie,
$\bigcup_{K \in \TT_h} \overline K$ $= \overline \Omega$, each $K$ is a tetrahedron,
and the intersection of two distinct tetrahedra is either empty or their common vertex,
edge, or face. We also assume that $\TT_h$ is compatible with the partition
$\partial \Omega = \overline{\GD} \cup \overline{\GN}$ of the boundary, which means
that each boundary face entirely lies either in $\overline{\GD}$ or in $\overline{\GN}$.
We denote by $\EE_h$ the set of edges of the mesh $\TT_h$ and by $\FF_h$ the set of
faces. The mesh is oriented which means that every edge
$\edge\in\EE_h$ is equipped with a fixed unit tangent vector $\ttau_\edge$ and every face
$F\in\FF_h$ is equipped with a fixed unit normal vector $\nn_F$
(see~\cite[Chapter~10]{Ern_Guermond_FEs_I_21}). Finally for every mesh cell $K\in\TT_h$,
$\nn_K$ denotes its unit outward normal vector.
The choice of the orientation is not relevant in what follows,
but we keep it fixed in the whole work.

If $K \in \TT_h$, $\EE_K \subset \EE_h$ denotes the set of edges of $K$, whereas
for each edge $\edge \in \EE_h$,
we denote by $\TTe$ the associated ``edge patch'' that consists of those
tetrahedra $K \in \TT_h$ for which $\edge \in \EE_K$,
see Figure~\ref{fig_patch}. We also employ the notation
$\ome \subset \Omega$ for the open subdomain associated with the patch $\TTe$.
We say that $\edge\in\EE_h$ is a boundary edge if it lies on $\partial\Omega$ and that
it is an interior edge otherwise (in this case, $\edge$ may touch the boundary at one of its
endpoints). The set of boundary edges is partitioned into the subset of Dirichlet edges $\EED_h$
with edges $\edge$ that lie in $\overline{\GD}$ and the subset of Neumann edges $\EEN_h$
collecting the remaining boundary edges.
For all edges $\edge \in \EE_h$, we denote by $\GeN$ the
open subset of $\partial \ome$ corresponding to the collection of faces having
$e$ as edge and lying in $\overline \GN$. Note that for interior edges,
$\GeN$ is empty and that for boundary edges, $\GeN$ never equals the whole $\partial \ome$.
We also set $\GeD \eq (\partial \ome \setminus \GeN)^\circ$.
\revision{Note that, in all situations, $\ome$ is simply connected and $\GeD$ is connected,
so that we do not need to invoke here the cohomology spaces}.

\begin{figure}[htb]
\centerline{\includegraphics[height=0.35\textwidth]{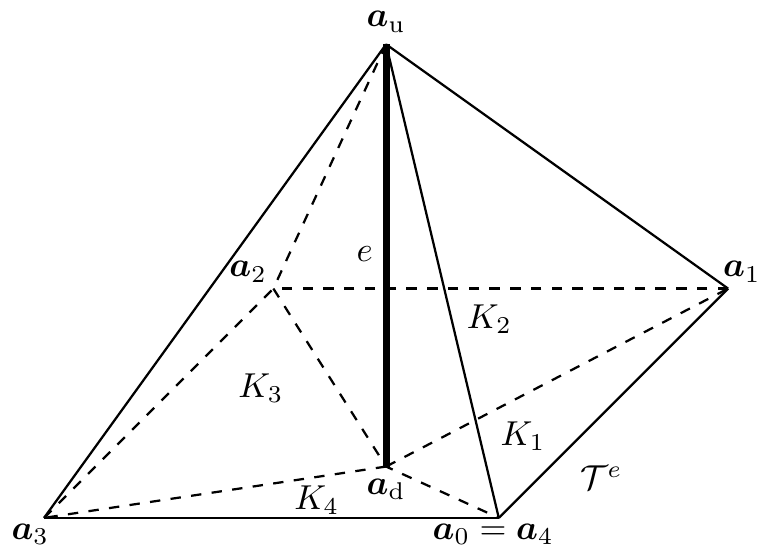} \qquad \qquad \includegraphics[height=0.35\textwidth]{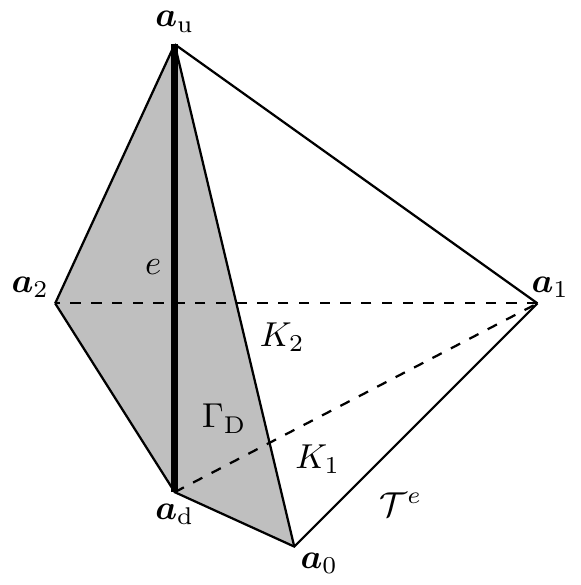}}
\caption{Interior (left) and Dirichlet boundary (right) edge patch $\TTe$}
\label{fig_patch}
\end{figure}

For every tetrahedron $K \in \TT_h$, we denote the diameter and the inscribed ball
diameter respectively by
\begin{equation*}
h_K \eq \sup_{\xx,\yy \in K} |\xx - \yy|,
\quad
\rho_K \eq \sup \left \{ r > 0 \; | \; \exists \xx \in K; B(\xx,r) \subset K \right \},
\end{equation*}
where $B(\xx,r)$ is the ball of diameter $r$ centered at $\xx$.
For every edge $\edge \in \EE_h$, $|\edge|$ is its measure (length) and
\begin{equation}
\label{eq_patch_not}
h_\ome \eq \sup_{\xx,\yy \in \ome} |\xx - \yy|, \quad
\rho_\edge \eq \min_{K \in \TTe} \rho_K.
\end{equation}
The shape-regularity parameters of the tetrahedron $K$ and of the edge patch $\TTe$
are respectively defined by
\begin{equation}
\label{eq_regularities}
\kappa_K \eq h_K/\rho_K \quad \text{ and } \quad \kappa_\edge \eq h_{\ome}/\rho_\edge.
\end{equation}

\subsection{Lagrange, N\'ed\'elec, and Raviart--Thomas elements}

If $K$ is a tetrahedron and $\pp \geq 0$ is an integer,
we employ the notation $\mathcal P_\pp(K)$ for the space of scalar-valued (Lagrange) polynomials
of degree less than or equal to $\pp$ on $K$ and $\widetilde{\mathcal P}_\pp(K)$
for homogeneous polynomials of degree $\pp$. The notation $\PP_\pp(K)$ (resp. $\widetilde{\PP}_\pp(K)$)
then stands for the space of vector-valued polynomials such that all their components belong to
$\mathcal P_\pp(K)$ (resp. $\widetilde{\mathcal P}_\pp(K)$).
Following~\cite{Ned_mix_R_3_80} and~\cite{Ra_Tho_MFE_77}, we then define on each tetrahedron
$K \in \TT_h$ the polynomial spaces of N\'ed\'elec and Raviart--Thomas functions as follows:
\begin{equation} \label{eq_RT_N}
\NN_\pp(K) \eq \PP_\pp(K) + \widetilde{\boldsymbol{\mathcal S}}_{\pp+1}(K) \quad \text{ and }
\quad
\RT_\pp(K) \eq \PP_\pp(K) + \xx \widetilde{\mathcal P}_\pp(K),
\end{equation}
where
$\widetilde{\boldsymbol{\mathcal S}}_\pp(K) \eq \big\{
\vv \in \widetilde{\PP}_\pp(K) \; | \; \xx \cdot \vv(\xx) = 0 \quad \forall \xx \in \overline{K} \big \}$.
For any collection of tetrahedra $\TT = \bigcup_{K \in \TT}\{K\}$ and the corresponding open
subdomain $\omega = \big(\bigcup_{K \in \TT} \overline{K} \big)^\circ \subset \Omega$,
we also write
\begin{align*}
\mathcal P_\pp(\TT)
&\eq
\left \{
v \in L^2(\omega) \; | \; v|_K \in \mathcal P_\pp(K) \quad \forall K \in \TT
\right \},
\\
\NN_\pp(\TT)
&\eq
\left \{
\vv \in \LL^2(\omega) \; | \; \vv|_K \in \NN_\pp(K) \quad \forall K \in \TT
\right \},
\\
\RT_\pp(\TT)
&\eq
\left \{
\vv \in \LL^2(\omega) \; | \; \vv|_K \in \RT_\pp(K) \quad \forall K \in \TT
\right \}.
\end{align*}

\subsection{N\'ed\'elec finite element discretization}

For the discretization of problem~\eqref{eq_maxwell_weak}, we consider in this work\revision{,
for a fixed polynomial degree $p \geq 0$, the N\'ed\'elec finite element space given by}
\begin{equation*}
\VVh \eq \NN_p(\TT_h) \cap \HH_{\GD}(\ccurl,\Omega).
\end{equation*}
\revision{The discrete counterpart of $\KK(\Omega)$, namely
\begin{equation*}
\KK_h \eq \left \{\vv_h \in \VVh \; | \; \curl \vv_h = \bzero \right \}
\end{equation*}
can be readily identified as a preprocessing step by introducing cuts in the mesh
\cite[Chapter 6]{gross_kotiuga_2004a}.}
The discrete problem then consists in finding a pair
$(\ee_h,\revision{\vvphi}_h) \in \VVh \times \revision{\KK_h}$ such that
\begin{equation}
\label{eq_maxwell_discrete}
\left \{
\begin{alignedat}{2}
(\ee_h,\revision{\ttheta_h})_{\Omega} &= 0 && \quad \forall \revision{\ttheta_h \in \KK_h}
\\
(\curl \ee_h,\curl \vv_h)_{\Omega} + (\revision{\vvphi_h},\vv_h)_{\Omega}
&=
(\jj,\vv_h)_{\Omega} && \quad \forall \vv_{h} \in \VVh.
\end{alignedat}
\right.
\end{equation}
Since \revision{$\KK_h \subset \KK(\Omega)$}, picking $\vv_h = \revision{\vvphi_h}$ in the second
equation of~\eqref{eq_maxwell_discrete} shows that \revision{$\vvphi_h = \bzero$}, so that
we actually have
\begin{equation}
\label{eq_maxwell_discrete_II}
(\curl \ee_h,\curl \vv_h)_{\Omega} = (\jj,\vv_h)_{\Omega} \quad \forall \vv_h \in \VVh.
\end{equation}
\revision{As for the continuous problem, we remark that when $\Omega$ is simply connected
and $\GD$ is connected, $\KK_h = \grad S_h$, where
$S_h \eq \mathcal P_{p+1}(\TT_h) \cap H^1_\GD(\Omega)$ is the usual Lagrange finite element space.}


\section{Main results} \label{sec_main_res}

This section presents our two main results.

\subsection{Stable discrete best-approximation of broken polynomials in {\em H}$(\ccurl)$}

Our first main result is the combination and extension of~\cite[Theorem~7.2]{Demk_Gop_Sch_ext_II_09}
and~\cite[Corollary~3.4]{Cost_McInt_Bog_Poinc_10} to the edge patches $\TTe$, complementing
similar previous achievements in $\HH(\ddiv)$ in two space dimensions
in~\cite[Theorem~7]{Brae_Pill_Sch_p_rob_09} and in $H^1$ and $\HH(\ddiv)$ in three space
dimensions~\cite[Corollaries~3.1 and~3.3]{Ern_Voh_p_rob_3D_20}.

\begin{theorem}[$\HH(\ccurl)$ best-approximation in an edge patch]
\label{theorem_stability}
Let an edge $\edge\in\EE_h$ and the associated edge patch $\TTe$ with subdomain $\ome$ be
fixed. Then, for every polynomial degree $p \geq 0$, all
$\jj_h^{\edge} \in \RT_p(\TTe) \cap \HH_{\GeN}(\ddiv,\ome)$
with $\div \jj_h^{\edge} = 0$, and all $\ch_h \in \NN_p(\TTe)$,
the following holds:
\begin{equation}
\label{eq_stab}
\min_{\substack{
\hh_h \in \NN_p(\TTe) \cap \HH_{\GeN}(\ccurl,\ome)
\\
\curl \hh_h = \jj_h^\edge}}
\|\hh_h - \ch_h\|_\ome
\leq
\Cste
\min_{\substack{
\hh \in \HH_{\GeN}(\ccurl,\ome)
\\
\curl \hh = \jj_h^{\edge}}}
\|\hh - \ch_h\|_\ome.
\end{equation}
Here, both minimizers are uniquely defined and the constant $\Cste$ only depends on the
shape-regularity parameter $\kappa_\edge$ of the patch $\TTe$ defined in~\eqref{eq_regularities}.
\end{theorem}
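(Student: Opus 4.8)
The plan is to prove~\eqref{eq_stab} by a constructive argument that reduces the global patch problem to a sequence of local, single-tetrahedron problems, thereby obtaining a $p$-robust constant. The key observation is that the right-hand side of~\eqref{eq_stab} can be taken to be realized by a fixed $\hh\in\HH_{\GeN}(\ccurl,\ome)$ with $\curl\hh=\jj_h^\edge$, and since $\curl(\ch_h-\hh)=\curl\ch_h-\jj_h^\edge$ is a piecewise Raviart--Thomas field on $\TTe$ that is \emph{not} in general in $\HH(\ddiv,\ome)$ (only its elementwise divergence vanishes), the task is to build a broken N\'ed\'elec field $\hh_h$ with the prescribed tangential jumps across the interior faces of $\TTe$ and the prescribed elementwise curl $\jj_h^\edge$, whose $\LL^2$ norm distance to $\ch_h$ is controlled by $\|\hh-\ch_h\|_\ome$. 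The natural strategy is a \emph{sequential sweep}: enumerate the tetrahedra $K_1,\dots,K_n$ of the edge patch around $\edge$ in cyclic (or linear, for a boundary patch) order so that consecutive tetrahedra share a face containing $\edge$, and construct $\hh_h|_{K_i}$ one element at a time, matching on the incoming shared face the tangential trace already fixed by the previous step and leaving the outgoing shared face free, while enforcing the curl constraint inside $K_i$.

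The core single-element tool is the combination of the polynomial extension operators: \cite[Corollary~3.4]{Cost_McInt_Bog_Poinc_10} provides, for a given divergence-free Raviart--Thomas datum on one tetrahedron, a N\'ed\'elec field with that curl, stable in $\LL^2$; and \cite[Theorem~7.2]{Demk_Gop_Sch_ext_II_09} provides a curl-free N\'ed\'elec polynomial inside the tetrahedron with prescribed tangential trace on a subset of faces, stable with respect to the trace norm. First I would, on each $K_i$, subtract off the already-known part of $\hh_h$, reducing to a problem with homogeneous data on the incoming face; then split the target into a piece carrying the curl (via the Costabel--McIntosh operator applied to $\jj_h^\edge|_{K_i}$, suitably corrected so that its tangential trace on the incoming face matches what is needed) and a curl-free correction (via the Demkowicz--Gopalakrishnan--Sch\"oberl extension) that fixes the remaining tangential-trace mismatch on the incoming face. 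The stability of each local step is $p$-robust because both cited operators are; what must be tracked carefully is that the trace data fed into the extension operator at step $i$ has its intrinsic norm controlled by $\|\hh-\ch_h\|_{K_{i-1}}$ plus $\|\hh-\ch_h\|_{K_i}$, using a discrete trace inequality and the shape-regularity of the patch, so that summing over $i=1,\dots,n$ (with $n$ bounded by $\kappa_\edge$) gives the claimed bound with constant $\Cste=\Cste(\kappa_\edge)$.

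The main obstacle I anticipate is the \emph{compatibility of the curl datum with the tangential-trace constraints across the shared faces}, together with the closing-up condition for an interior (cyclic) edge patch. Concretely, when we prescribe $\curl\hh_h|_{K_i}=\jj_h^\edge|_{K_i}$ and also fix the tangential trace of $\hh_h$ on the incoming face $F_{i-1,i}$, the tangential trace of $\curl\hh_h$ on $F_{i-1,i}$ — which is the surface curl of the tangential trace of $\hh_h$ — is thereby determined, and it must coincide with the tangential component of $\jj_h^\edge$ restricted to that face; since $\jj_h^\edge\in\HH_{\GeN}(\ddiv,\ome)$ has single-valued normal trace and elementwise-zero divergence, this is where the global structure of the patch (and the role of the edge $\edge$ itself, where the tetrahedra meet) enters, and it is presumably what forces the hypothesis $\div\jj_h^\edge=0$ globally rather than merely elementwise. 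For the cyclic patch, after going once around, the extension on the last element $K_n$ must simultaneously match the trace on $F_{n-1,n}$ and on $F_{n,1}$ (the face shared with $K_1$); ensuring these are consistent will require using that the \emph{continuous} minimizer $\hh$ exists with exactly these prescriptions, i.e.\ the obstruction to closing up vanishes precisely because the continuous constrained problem on the right-hand side of~\eqref{eq_stab} is solvable — a cohomological argument localized to the edge patch, for which the remark in the excerpt that $\ome$ is simply connected and $\GeD$ connected is exactly what is needed. I would handle this by first establishing the equivalent ``Proposition~\ref{prop_stability_patch}'' form and the auxiliary Poincar\'e-type inequality on the edge patch (Appendix~\ref{appendix_weber}) to guarantee uniqueness and well-posedness of both minimizers, and only then carry out the sweep, using the continuous solution as a scaffold to verify solvability at each step and at the closing step.
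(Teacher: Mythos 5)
Your overall architecture matches the paper's: the statement is first recast, by shifting by $\ch_h$, as a broken $\HH(\ccurl)$ polynomial extension problem on the edge patch (Proposition~\ref{prop_stability_patch}), and the discrete field is then built by a sequential sweep $K_1,\dots,K_n$ around $\edge$, each step invoking the single-tetrahedron extension results of Demkowicz--Gopalakrishnan--Sch\"oberl and Costabel--McIntosh (combined in Proposition~\ref{prop_stability_tetrahedra}). You also correctly identify the two structural issues: the face/edge compatibility of the curl and trace data, and the closing-up condition for a cyclic patch. (One small misattribution: the Poincar\'e-type inequality of Appendix~\ref{appendix_weber} is not needed for this theorem; well-posedness of both minimizers follows from nonemptiness of the discrete constraint set plus convexity.)

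The genuine gap is in your stability mechanism. You propose to control ``the intrinsic norm of the trace data fed into the extension operator at step $i$'' by $\|\hh-\ch_h\|_{K_{i-1}}+\|\hh-\ch_h\|_{K_i}$ ``using a discrete trace inequality.'' This is exactly the step that cannot be done this way if one wants a $p$-independent constant: polynomial inverse trace inequalities carry constants growing with $p$, and the natural trace norm for the $\HH(\ccurl)$ extension (a fractional/quotient norm on the face) is not controlled by the $\LL^2$ norm of an arbitrary field in the neighboring element. The paper never estimates a trace norm. Instead, on each $K_j$ it exhibits an explicit \emph{continuous} competitor with the correct curl and the correct tangential trace on the incoming face, namely $\zzeta_j \eq \vv^\star_j - \epsilon_{j-1,j}\,\ppsi^{\mathrm{c}}_{j-1,j}(\xxi_p^{\star\,j-1}-\vv^\star_{j-1})$, i.e.\ the restriction of the continuous patch minimizer corrected by the Piola pullback (reflection across the shared face) of the discrepancy accumulated on the previous element; its $\LL^2$ norm is bounded using only the $\LL^2$-stability of the Piola map (shape regularity), and then the single-element discrete-vs-continuous stability of Proposition~\ref{prop_stability_tetrahedra} bounds the discrete minimizer on $K_j$. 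Without this (or an equivalent) device your induction does not close $p$-robustly. Relatedly, the last element of an interior or Neumann patch is not resolved merely by ``the continuous problem is solvable'': the required compatibility along $\edge$ is verified algebraically by a telescoping sum of tangential traces (conditions~\eqref{eq_comp_c}--\eqref{eq_comp_d}), and the stability bound there needs a competitor built from Piola maps of \emph{all} previous elements into $K_n$, see~\eqref{eq_v_def}, with an even/odd case distinction on $n$ (one tetrahedron being subdivided when $n$ is odd).
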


Note that the converse inequality to~\eqref{eq_stab} holds trivially with constant $1$, \ie,
\[
\min_{\substack{
\hh \in \HH_{\GeN}(\ccurl,\ome)
\\
\curl \hh = \jj_h^\edge}}
\|\hh - \ch_h\|_\ome
\leq
\min_{\substack{
\hh_h \in \NN_p(\TTe) \cap \HH_{\GeN}(\ccurl,\ome)
\\
\curl \hh_h = \jj_h^\edge}}
\|\hh_h - \ch_h\|_\ome.
\]
This also makes apparent the power of the result~\eqref{eq_stab}, stating that for piecewise
polynomial data $\jj_h^\edge$ and $\ch_h$, the best-approximation error over a piecewise polynomial
subspace of $\HH_{\GeN}(\ccurl,\ome)$ of degree $p$ is, up to a $p$-independent constant,
equivalent to the best-approximation error over the entire space $\HH_{\GeN}(\ccurl,\ome)$.
The proof of this result is presented in Section~\ref{sec_proof_stability}.
We remark that Proposition~\ref{prop_stability_patch} below gives an equivalent
reformulation of Theorem~\ref{theorem_stability} in the form of a stable broken $\HH(\ccurl)$
polynomial extension in the edge patch. \revision{Finally, the following form, which follows
from the proof in Section~\ref{sec:proof_stability_patch}, see Remark~\ref{rem_sweep_brok},
has  important practical applications:

\begin{theorem}[$\HH(\ccurl)$ best-approximation by an explicit sweep through an edge patch]
\label{thm_sweep}
Let the assumptions of Theorem~\ref{theorem_stability} be satisfied.
Consider a sequential sweep over all elements $K$ sharing the edge $\edge$,
$K \in \TTe$ such that \textup{(i)} the enumeration starts from an arbitrary
tetrahedron if $\edge$ is an interior edge and from a tetrahedron containing a
face that lies in $\GeN$ (if any) or in $\GeD$ (if none in $\GeN$) if $\edge$ is a
boundary edge; \textup{(ii)} two consecutive tetrahedra in the enumeration share a face.
On each $K \in \TTe$, consider
\begin{equation}
\label{eq_def_estimator_sweep}
\hh_h^{\edge,\heartsuit}|_K \eq \min_{\substack{
\hh_h \in \NN_p(K)\\
\curl \hh_h = \jj_h^\edge\\
\hh_h|_{\FF}^\ttau = \rr_\FF}}
\|\hh_h - \ch_h\|_K.
\end{equation}
Here, $\FF$ is the set of faces that $K$ shares with elements $K'$
previously considered or lying in $\GeN$, and $\hh_h|_{\FF}^\ttau$ denotes
the restriction of the tangential trace of $\hh_h$ to the faces of $\FF$
(see Definition \ref{definition_partial_trace} below for details). The boundary datum
$\rr_{\FF}$ is either the tangential trace of $\hh_h^{\edge,\heartsuit}|_{K'}$
obtained after minimization over the previous tetrahedron $K'$, or $\mathbf 0$ on $\GeN$. Then,
\begin{equation}
\label{eq_stab_sweep}
\|\hh_h^{\edge,\heartsuit} - \ch_h\|_\ome
\leq
\Cste
\min_{\substack{
\hh \in \HH_{\GeN}(\ccurl,\ome)
\\
\curl \hh = \jj_h^{\edge}}}
\|\hh - \ch_h\|_\ome.
\end{equation}
\end{theorem}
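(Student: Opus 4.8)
The plan is to obtain Theorem~\ref{thm_sweep} as a byproduct of the \emph{constructive} proof of Theorem~\ref{theorem_stability} (equivalently, of Proposition~\ref{prop_stability_patch}) given in Section~\ref{sec:proof_stability_patch}, rather than through a separate argument. That proof does not merely certify the existence of a minimizer on the left-hand side of~\eqref{eq_stab}; it exhibits a concrete field $\hh_h^\star \in \NN_p(\TTe) \cap \HH_{\GeN}(\ccurl,\ome)$ with $\curl \hh_h^\star = \jj_h^\edge$ for which $\|\hh_h^\star - \ch_h\|_\ome$ is controlled by the right-hand side of~\eqref{eq_stab} up to $\Cste$. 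The first step is to read off from that construction that $\hh_h^\star$ is built element by element along a sweep of precisely the type described in items (i)--(ii): one orders the elements of $\TTe$ as $K_1,\dots,K_n$ with consecutive elements sharing a face through $\edge$ and the sweep started as prescribed, and on each $K_j$ one sets $\hh_h^\star|_{K_j}$ to be the $\|\cdot\|_{K_j}$-minimal element of $\NN_p(K_j)$ with curl $\jj_h^\edge|_{K_j}$ whose tangential trace on the faces already treated (those shared with $K_1,\dots,K_{j-1}$ and those in $\GeN$) equals the trace inherited from the previous elements, or $\bzero$ on $\GeN$. This is exactly the elementwise problem~\eqref{eq_def_estimator_sweep}, so $\hh_h^{\edge,\heartsuit}=\hh_h^\star$; making this identification precise is the content of Remark~\ref{rem_sweep_brok}, and~\eqref{eq_stab_sweep} is then literally the stability bound already established for $\hh_h^\star$.

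It remains to justify that~\eqref{eq_def_estimator_sweep} is well posed and that the assembled field has the claimed properties. Two of these are structural and I would dispatch them first: since, by construction, the tangential traces of $\hh_h^{\edge,\heartsuit}$ from the two sides of any interelement face coincide and the tangential trace vanishes on $\GeN$, the assembled field has a single-valued tangential trace across the faces of $\TTe$ and zero tangential trace on $\GeN$; this simultaneously gives $\HH_{\GeN}(\ccurl,\ome)$-conformity (via the partial trace of Definition~\ref{definition_partial_trace}) and, combined with the elementwise identity $\curl(\hh_h^{\edge,\heartsuit}|_{K_j}) = \jj_h^\edge|_{K_j}$, the global constraint $\curl \hh_h^{\edge,\heartsuit} = \jj_h^\edge$ on $\ome$ (no spurious distributional curl is created on the skeleton). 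Uniqueness of each local minimizer is then immediate, being the minimizer of a strictly convex quadratic functional over a nonempty affine subspace of the finite-dimensional space $\NN_p(K_j)$; the only genuine point is nonemptiness of those affine subspaces, i.e.\ the feasibility of each local problem.

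Feasibility is where the real work lies, and it is exactly what the proof of Theorem~\ref{theorem_stability} supplies. For a tetrahedron $K_j$ carrying at most one prescribed face through $\edge$, the existence of an $\NN_p(K_j)$ field with the prescribed tangential trace on that face and prescribed $\RT_p(K_j)$ curl follows from the single-simplex polynomial extension results of~\cite{Demk_Gop_Sch_ext_II_09} and~\cite{Cost_McInt_Bog_Poinc_10}, the requisite compatibility condition --- that the surface curl of the prescribed trace equals $\jj_h^\edge \cdot \nn_{K_j}$ on that face --- being inherited from the previous element because $\jj_h^\edge \in \RT_p(\TTe) \cap \HH_{\GeN}(\ddiv,\ome)$ has a single-valued normal trace. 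The hard part, and the reason the enumeration conditions (i)--(ii) are imposed, is the \emph{final} element in the cyclic case of an interior edge (and analogous boundary configurations), where $K_n$ has \emph{both} of its faces through $\edge$ already treated --- one shared with $K_{n-1}$, the other with $K_1$ --- so that one must extend two tangential traces meeting along $\edge$ together with a prescribed curl; this ``loop-closing'' compatibility does not reduce to a single application of the one-face extension operators. In the proof of Theorem~\ref{theorem_stability} it is secured by using the continuous constrained minimizer $\hh$ on $\ome$ (well posed since $\ome$ is simply connected and $\GeD$ connected) as a reference for the traces accumulated around the patch, so that the correction remaining to be extended on $K_n$ is feasible while the whole construction stays stable. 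For a boundary edge, condition (i) ensures that the Neumann datum $\bzero$ is placed at the \emph{first} element and that the $\GeD$-faces --- excluded from $\FF$, hence left free --- terminate the sweep with no compatibility requirement, so the pass is plain and linear and the delicate closing step is avoided (except in the symmetric situation of an edge lying in the interior of $\GeN$). Granting these facts, all established along the proof in Section~\ref{sec:proof_stability_patch}, the field $\hh_h^{\edge,\heartsuit}$ is well defined, coincides with $\hh_h^\star$, and hence satisfies~\eqref{eq_stab_sweep}.
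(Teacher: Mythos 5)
Your overall strategy is the one the paper itself uses: Theorem~\ref{thm_sweep} is read off from the constructive proof of Proposition~\ref{prop_stability_patch}, whose elementwise minimizers~\eqref{eq_min_K}, assembled along the sweep, are exactly (after the shift by $\ch_h$) the field $\hh_h^{\edge,\heartsuit}$, and the bound~\eqref{eq_stab_sweep} is inequality~\eqref{eq res} as recorded in Remark~\ref{rem_sweep_brok}. The identification of the swept field, the conformity and curl bookkeeping, and the one-face feasibility via the normal continuity of $\jj_h^\edge$ are all as in the paper.

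The one step you describe incorrectly is the mechanism securing feasibility of the last, ``loop-closing'' element. It is not obtained by ``using the continuous constrained minimizer as a reference for the traces accumulated around the patch''; the continuous minimizer $\vv^\star$ enters only in the second half of the proof, where the competitors $\zzeta_j$ are built to establish the stability estimate. Feasibility of the two-face problem on $K_n$ is a purely discrete, algebraic matter: Proposition~\ref{prop_stability_tetrahedra} already covers prescription on two faces provided the surface curls match $\rr_{K_n}\cdot\nn_F$ and the two prescribed traces agree along the shared edge $\edge$, and this latter condition~\eqref{eq:cond_comp2} is verified in step (3b) of the proof of Lemma~\ref{lem_EU} by telescoping the (automatic) edge compatibility of each intermediate N\'ed\'elec field $\xxi_p^j$ around the patch and invoking the data compatibility~\eqref{eq_comp_c} (interior type) or~\eqref{eq_comp_d} (Neumann type), which in the application holds because the tangential jumps of $\ch_h$ telescope to zero around the loop. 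If you tried to run the feasibility argument through the continuous minimizer as written, you would not obtain the polynomial edge-compatibility condition that the discrete two-face extension requires. With that correction, the proposal coincides with the paper's argument.
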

}

\subsection{$p$-robust \revision{broken patchwise} equilibration a posteriori error
estimates for \revision{the curl--curl problem}}

Our second main result is a polynomial-degree-robust a posteriori error analysis of
N\'ed\'elec finite elements~\eqref{eq_maxwell_discrete} applied to
\revision{curl--curl problem}~\eqref{eq_maxwell_strong}. The local efficiency proof is an
important application of Theorem~\ref{theorem_stability}. To present these results in detail,
we need to prepare a few tools.

\subsubsection{Functional inequalities and data oscillation}
\label{sec_Poinc}

For every edge $\edge \in \EE_h$, we associate with the subdomain $\ome$ a local Sobolev space
$H^1_\star(\ome)$ with mean/boundary value zero,
\begin{equation}
\label{eq_Hse}
H^1_\star(\ome) \eq
\begin{cases}
    \{ v \in H^1(\ome)\; | \; v=0 \text{ on faces having $e$ as edge}\\
     \hspace{5.3cm}\text{and lying in $\overline \GD$}\} \qquad & \text{ if } \edge \in \EED_h,\\
    \left \{ v \in H^1(\ome) \; | \; \int_\ome v = 0 \right \} \qquad & \text{ otherwise}.
\end{cases}
\end{equation}
Poincar\'e's inequality then states that there exists a constant $\CPe$ only
depending on the shape-regularity parameter $\kappa_\edge$ such that
\begin{equation}
\label{eq_local_poincare}
\|v\|_{\ome} \leq \CPe h_\ome \|\grad v\|_{\ome} \qquad \forall v \in H^1_\star(\ome).
\end{equation}

To define our error estimators, it is convenient to introduce a piecewise polynomial
approximation of the datum $\jj\in \HH_{\GN}(\ddiv,\Omega)$ by setting on every
edge patch $\TTe$ associated with the edge $\edge \in \EE_h$,
\begin{equation}
\label{eq_definition_jj_h}
\jj_h^\edge \eq \argmin{\jj_h \in \RT_p(\TTe) \cap \HH_{\GeN}(\ddiv,\ome) \\ \div \jj_h = 0}
\|\jj - \jj_h\|_{\ome}.
\end{equation}
This leads to the following data oscillation estimators:
\begin{equation}
\label{eq_definition_osc}
\osc_\edge \eq \CPVe h_\ome \|\jj - \jj_h^\edge\|_{\ome},
\end{equation}
\revision{where the constant $\CPVe$ is such that for every edge $\edge \in \EE_h$, we have}
\begin{equation}
\label{eq_local_poincare_vectorial}
\|\vv\|_{\ome} \leq \CPVe h_{\ome}\|\curl \vv\|_{\ome}
\quad
\forall \vv \in \HH_{\GeD}(\ccurl,\ome) \cap \HH_{\GeN}(\ddiv,\ome) \text{ with } \div \vv = 0.
\end{equation}
\revision{We show in Appendix~\ref{appendix_weber} that $\CPVe$ only depends on
the shape-regularity parameter $\kappa_\edge$. Notice that \eqref{eq_local_poincare_vectorial}
is a local Poincar\'e-like inequality using the curl of divergence-free fields in the edge patch.
This type of inequality is known under various names in the literature. Seminal
contributions can be found in the work of Friedrichs~\cite[equation~(5)]{Fried:55}
for smooth manifolds (see also Gaffney~\cite[equation~(2)]{Gaffney:55}) and later in
Weber~\cite{Weber:80} for Lipschitz domains. This motivates the present use of the subscript
${}_{\rm PFW}$ in~\eqref{eq_local_poincare_vectorial}.}

\revision{Besides the above local functional inequalities, we shall also use the fact}
that there exists a constant $\Clift$ such that for all $v \in \HH_\GD(\ccurl,\Omega)$,
there exists $\ww \in \HH^1(\Omega) \cap \HH_\GD(\ccurl,\Omega)$ such that $\curl \ww = \curl \vv$
and
\begin{equation}
\label{eq_estimate_lift}
\|\grad \ww\|_{\Omega} \leq \Clift \|\curl \vv\|_\Omega.
\end{equation}
When either $\GD$ or $\GN$ has zero measure, the existence of $\Clift$ follows
from Theorems 3.4 and 3.5 of~\cite{Cost_Dau_Nic_sing_Maxw_99}.
If in addition $\Omega$ is convex, one can take $\Clift = 1$
(see~\cite{Cost_Dau_Nic_sing_Maxw_99} together with~\cite[Theorem~3.7]{Gir_Rav_NS_86}
for Dirichlet boundary conditions and
\cite[Theorem~3.9]{Gir_Rav_NS_86} for Neumann boundary conditions).
\revision{For mixed boundary conditions, the existence of $\Clift$ can be obtained
as a consequence of \cite[Section~2]{Hipt_Pechs_discr_reg_dec_19}. Indeed, we first
project $\vv \in \HH_\GD(\ccurl,\Omega)$ onto $\tvv \in \KK(\Omega)^\perp$ without
changing its curl. Then, we define $\ww \in \HH^1(\Omega)$ from $\tvv$ using
\cite{Hipt_Pechs_discr_reg_dec_19}. Finally, we control $\|\tvv\|_\Omega$ by
$\|\curl \vv\|_\Omega$ with the inequality from \cite[Proposition 7.4]{Fer_Gil_Maxw_BC_97}
which is a global Poincar\'e-like inequality in the spirit of~\eqref{eq_local_poincare_vectorial}.}

\subsubsection{\revision{Broken patchwise} equilibration by edge-patch problems}

Our a posteriori error estimator is constructed via a simple restriction of the right-hand side
of~\eqref{eq_PS} to edge patches, where no hat function is employed, no modification of
the source term appears, and no boundary condition is imposed for interior edges,
in contrast to the usual equilibration
in~\cite{Dest_Met_expl_err_CFE_99, Braess_Scho_a_post_edge_08, Ern_Voh_p_rob_15}.
For each edge $\edge \in \EE_h$, introduce
\begin{subequations}\label{eq_definition_estimator}
\begin{equation}
\eta_\edge \eq \|\hh_h^{\edge,\star} - \curl \ee_h\|_\ome, \label{eq_definition_estimator_1}
\end{equation}
\revision{where $\hh_h^{\edge,\star}$ is the argument of the left minimizer in~\eqref{eq_stab}
for the datum $\jj_h^\edge$ from~\eqref{eq_definition_jj_h} and
$\ch_h \eq (\curl \ee_h)|_{\ome}$, \ie,}
\begin{equation}
\hh_h^{\edge,\star} \eq \argmin{\substack{
\hh_h \in \NN_p(\TTe) \cap \HH_{\GeN}(\ccurl,\ome)
\\
\curl \hh_h = \jj_h^\edge}}
\|\hh_h - \curl \ee_h\|_\ome. \label{eq_definition_estimator_2}
\end{equation}
\end{subequations}
\revision{In practice, $\hh_h^{\edge,\star}$ is computed from the
Euler--Lagrange conditions for the minimization problem~\eqref{eq_definition_estimator_2}.
This leads to the} following patchwise mixed finite element problem:
Find $\hh_h^{\revision{\edge},\star} \in \NN_p(\TTe) \cap \HH_{\GeN}(\ccurl,\ome)$,
$\sig_h^{\revision{\edge},\star} \in \RT_p(\TTe) \cap \HH_{\GeN}(\ddiv,\ome)$,
and $\zeta^{\revision{\edge},\star}_h \in \mathcal P_p(\TTe)$ such that
\begin{equation} \label{broken_equil}
\left \{ \arraycolsep=2pt
\begin{array}{rclclcl}
(\hh_h^{\revision{\edge},\star},\vv_h)_{\ome}
&+&
(\sig_h^{\revision{\edge},\star},\curl \vv_h)_{\ome}
& &
&=&
(\curl \ee_h,\vv_h)_{\ome},
\\
(\curl \hh_h^{\revision{\edge},\star},\ww_h)_{\ome}
&&
&+ &
(\zeta^{\revision{\edge},\star}_h,\div \ww_h)_\ome
&=&
(\jj,\ww_h)_{\ome},
\\
& &
(\div \sig^{\revision{\edge},\star}_h,\varphi_h)_\ome
& &
&=&
0
\end{array}
\right .
\end{equation}
for all
$\vv_h \in \NN_p(\TTe) \cap \HH_{\GeN}(\ccurl,\ome)$,
$\ww_h \in \RT_p(\TTe) \cap \HH_{\GeN}(\ddiv,\ome)$,
and $\varphi_h \in \mathcal P_p(\TTe)$. We note that from the optimality condition associated with~\eqref{eq_definition_jj_h},
using $\jj$ or $\jj_h^\edge$ in~\eqref{broken_equil} is equivalent.

\subsubsection{\revision{Broken patchwise} equilibration by sequential sweeps}

The patch problems~\eqref{eq_definition_estimator_2} lead to the solution of the linear
systems~\eqref{broken_equil}. Although these are local around each edge and are mutually
independent, they \revision{entail} some computational cost. This cost can be significantly
reduced by taking inspiration from~\cite{Dest_Met_expl_err_CFE_99},
\cite{Luce_Wohl_local_a_post_fluxes_04}, \cite[Section~4.3.3]{Voh_guar_rob_VCFV_FE_11},
the proof of~\cite[Theorem~7]{Brae_Pill_Sch_p_rob_09}, or~\cite[Section~6]{Ern_Voh_p_rob_3D_20}
and literally following the proof in Section~\ref{sec:proof_stability_patch} below\revision{,
as summarized in Theorem~\ref{thm_sweep}}.
This leads to \revision{an alternative error estimator} whose price is the sequential sweep through
tetrahedra sharing the given edge, \revision{where for each tetrahedron, one} solve\revision{s
the elementwise problem~\eqref{eq_def_estimator_sweep} for the
datum $\jj_h^\edge$ from~\eqref{eq_definition_jj_h} and $\ch_h \eq (\curl \ee_h)|_{\ome}$, \ie,
\begin{subequations}
\label{eq_definition_estimator_sweep}
\begin{equation}
\label{eq_definition_estimator_sweep_2}
\hh_h^{\edge,\heartsuit}|_K \eq \min_{\substack{
\hh_h \in \NN_p(K)\\
\curl \hh_h = \jj_h^\edge\\
\hh_h|_{\FF}^\ttau = \rr_\FF}}
\|\hh_h - \curl \ee_h\|_K \qquad \forall K \in \TTe,
\end{equation}
and then set
\begin{equation}
\label{eq_definition_estimator_sweep_1}
\eta_\edge \eq \|\hh_h^{\edge,\heartsuit} - \curl \ee_h\|_{\ome}.
\end{equation}
\end{subequations}}

\subsubsection{Guaranteed, locally efficient, and $p$-robust a posteriori error estimates}

For each edge $\edge \in \EE_h$, let $\ppsi_\edge$ be the (scaled) edge basis functions
of the lowest-order N\'ed\'elec space, in particular satisfying
$\supp \ppsi_\edge = \overline{\ome}$. More precisely, let $\ppsi_\edge$
be the unique function in $\NN_0(\TT_h) \cap \HH(\ccurl,\Omega)$ such that
\begin{equation}
\label{eq_BF}
\int_{\edge'} \ppsi_\edge \cdot \ttau_{\edge'} = \delta_{\edge,\edge'} |\edge|,
\end{equation}
recalling that $\ttau_{\edge'}$ is the unit tangent vector orienting the edge $\edge'$.
We define
\begin{equation}
\label{eq_definition_Ccont}
\Cconte
\eq
\|\ppsi_\edge\|_{\infty,\ome}
+
\CPe h_\ome \|\curl \ppsi_\edge\|_{\infty,\ome} \quad \forall \edge \in \EE_h,
\end{equation}
where $\CPe$ is Poincar\'e's constant from~\eqref{eq_local_poincare} and $h_\ome$ is the diameter
of the patch domain $\ome$. We actually show in Lemma~\ref{lem_c_stab} below that
\begin{equation}
\label{eq_bound_Ccont}
\Cconte \leq \Cke \eq \frac{2|\edge|}{\rho_\edge} \left ( 1 + \CPe \kappa_\edge \right )
\quad \forall \edge \in \EE_h,
\end{equation}
where $\rho_\edge$ is defined in~\eqref{eq_patch_not};
$\Cconte$ is thus uniformly bounded by the patch-regularity parameter $\kappa_\edge$
defined in~\eqref{eq_regularities}.

\begin{theorem}[$p$-robust a posteriori error estimate]
\label{theorem_aposteriori}
Let $\ee$ be the weak solution of \revision{the curl--curl problem}~\eqref{eq_maxwell_weak}
and let $\ee_h$ be its N\'ed\'elec finite element approximation
solving~\eqref{eq_maxwell_discrete}. Let the data oscillation estimators $\osc_\edge$ be defined
in~\eqref{eq_definition_osc} and the \revision{broken patchwise} equilibration estimators
$\eta_\edge$ be defined in either~\eqref{eq_definition_estimator}
or~\eqref{eq_definition_estimator_sweep}.
Then, with the constants $\Clift$, $\Cconte$, and $\Cste$ from
respectively~\eqref{eq_estimate_lift}, \eqref{eq_definition_Ccont},
and~\eqref{eq_stab}, the following global upper bound holds true:
\begin{equation}
\label{eq_upper_bound}
\|\curl(\ee - \ee_h)\|_\Omega
\leq
\sqrt{6} \Clift \left (\sum_{\edge \in \EE_h}\Cconte^2
\left (\eta_\edge + \osc_\edge\right )^2 \right )^{1/2},
\end{equation}
as well as the following lower bound local to the edge patches $\ome$:
\begin{equation}
\label{eq_lower_bound}
\eta_\edge \leq \Cste \left (\|\curl(\ee - \ee_h)\|_\ome + \osc_\edge\right )
\qquad \forall \edge \in \EE_h.
\end{equation}
\end{theorem}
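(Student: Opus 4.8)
I would prove the two bounds separately. The lower bound~\eqref{eq_lower_bound} is essentially immediate from Theorem~\ref{theorem_stability}: by~\eqref{eq_definition_estimator}, $\eta_\edge$ equals the value of the left-hand minimizer in~\eqref{eq_stab} for the datum $\jj_h^\edge$ from~\eqref{eq_definition_jj_h} and $\ch_h\eq(\curl\ee_h)|_\ome$, so Theorem~\ref{theorem_stability} gives $\eta_\edge\le\Cste\min\{\|\hh-\curl\ee_h\|_\ome:\hh\in\HH_{\GeN}(\ccurl,\ome),\ \curl\hh=\jj_h^\edge\}$. To bound the right-hand side I would take the competitor $\hh\eq(\curl\ee)|_\ome+\vv_\edge$, where $\vv_\edge$ is the minimal-$\LL^2(\ome)$-norm field with $\curl\vv_\edge=\jj_h^\edge-\jj$ and tangential trace vanishing on $\GeN$; since $\jj_h^\edge-\jj$ is divergence-free with vanishing normal trace on $\GeN$ and $\ome$ is simply connected with $\GeD$ connected, this field exists, is divergence-free with vanishing normal trace on $\GeD$, and obeys $\|\vv_\edge\|_\ome\le\CPVe h_\ome\|\jj-\jj_h^\edge\|_\ome=\osc_\edge$ by the Poincar\'e--Friedrichs--Weber inequality~\eqref{eq_local_poincare_vectorial} (applied with the roles of $\GeD$ and $\GeN$ interchanged). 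Because $(\curl\ee)|_\ome\in\HH_{\GeN}(\ccurl,\ome)$ (from $(\curl\ee)\times\nn=\bzero$ on $\GN\supseteq\GeN$) and $\curl\curl\ee=\jj$, the field $\hh$ is admissible, and the triangle inequality yields $\|\hh-\curl\ee_h\|_\ome\le\|\curl(\ee-\ee_h)\|_\ome+\osc_\edge$, which is~\eqref{eq_lower_bound}.

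For the upper bound~\eqref{eq_upper_bound} I would start from the observation that, by the weak formulation~\eqref{eq_maxwell_weak_II}, for every $\ww\in\HH_\GD(\ccurl,\Omega)$ with $\curl\ww=\curl(\ee-\ee_h)$,
\[
\|\curl(\ee-\ee_h)\|_\Omega^2=(\curl(\ee-\ee_h),\curl\ww)_\Omega=(\jj,\ww)_\Omega-(\curl\ee_h,\curl\ww)_\Omega.
\]
By the regular-lifting estimate~\eqref{eq_estimate_lift} I would moreover choose such a $\ww$ in $\HH^1(\Omega)$ with $\|\grad\ww\|_\Omega\le\Clift\|\curl(\ee-\ee_h)\|_\Omega$. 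It then remains to bound the residual $(\jj,\ww)_\Omega-(\curl\ee_h,\curl\ww)_\Omega$ by $\sqrt6\,(\sum_{\edge\in\EE_h}\Cconte^2(\eta_\edge+\osc_\edge)^2)^{1/2}\|\grad\ww\|_\Omega$ and divide by $\|\curl(\ee-\ee_h)\|_\Omega$.

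The heart of the matter is a $p$-robust localization of this residual over the edge patches, realized with the lowest-order N\'ed\'elec edge functions $\ppsi_\edge$ of~\eqref{eq_BF}: each $\ppsi_\edge$ is supported in $\overline\ome$, each tetrahedron carries exactly six of them, and I would exploit their algebraic structure to write the residual as $\sum_{\edge\in\EE_h}\big[(\jj,\tww_\edge)_\ome-(\curl\ee_h,\curl\tww_\edge)_\ome\big]$, where $\tww_\edge$ is assembled from $\ppsi_\edge$ and a patchwise Poincar\'e correction of $\ww$ (subtracting a constant on interior edges, using vanishing traces on Dirichlet ones) and arranged so that $\tww_\edge$ vanishes tangentially on $\GeD$ and is divergence-free (or pairs with $\jj-\jj_h^\edge$ only through a divergence-free component). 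On each patch I would insert $\curl\hh_h^{\edge,\star}=\jj_h^\edge$ and integrate by parts---the boundary terms vanishing because $\hh_h^{\edge,\star}\in\HH_{\GeN}(\ccurl,\ome)$ and $\tww_\edge$ vanishes tangentially on $\GeD$---to obtain $(\hh_h^{\edge,\star}-\curl\ee_h,\curl\tww_\edge)_\ome+(\jj-\jj_h^\edge,\tww_\edge)_\ome$, which Cauchy--Schwarz bounds by $\eta_\edge\|\curl\tww_\edge\|_\ome+\|\jj-\jj_h^\edge\|_\ome\|\tww_\edge\|_\ome$; using $\curl(\ppsi_\edge v)=\grad v\times\ppsi_\edge+v\,\curl\ppsi_\edge$, the local Poincar\'e inequality~\eqref{eq_local_poincare}, and~\eqref{eq_local_poincare_vectorial}, this is controlled by $\Cconte(\eta_\edge+\osc_\edge)\|\grad\ww\|_\ome$---precisely the combination $\|\ppsi_\edge\|_{\infty,\ome}+\CPe h_\ome\|\curl\ppsi_\edge\|_{\infty,\ome}$ of~\eqref{eq_definition_Ccont} being what makes this work. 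Summing, Cauchy--Schwarz over edges and the finite-overlap bound $\sum_{\edge\in\EE_h}\|\grad\ww\|_\ome^2\le6\|\grad\ww\|_\Omega^2$ yield the factor $\sqrt6$ and finish the proof. I expect the localization to be the main obstacle: one must produce an \emph{exact} edge-patch decomposition of the residual whose local test fields simultaneously (i) vanish on $\GeD$, so that the integration-by-parts boundary terms drop, and (ii) are controlled by $\Cconte\|\grad\ww\|_\ome$ (respectively by $\Cconte h_\ome\|\grad\ww\|_\ome$) after the Poincar\'e correction, while tracking the constants $\Clift$, $\Cconte$, $\CPVe$ and the overlap $6$ so that they combine into exactly~\eqref{eq_upper_bound}; the boundary-edge geometry, where $\GeD$ is only part of $\partial\ome$, needs separate but routine attention.
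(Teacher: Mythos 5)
Your strategy coincides with the paper's on both bounds: the upper bound via the $\HH^1$ lifting~\eqref{eq_estimate_lift}, localization of the residual onto edge patches through the lowest-order functions $\ppsi_\edge$, the cut-off estimate~\eqref{eq_estimate_cont}, and the overlap count $6$; the lower bound via Theorem~\ref{theorem_stability} plus a comparison of the continuous minimum with $\|\curl(\ee-\ee_h)\|_\ome$ up to oscillation. Your inlined integration by parts against $\hh_h^{\edge,\star}$ (instead of the paper's dual-norm characterization $\|\RR_h^\edge\|_{\star,\edge}=\min\|\hh-\curl\ee_h\|_\ome$) and your explicit competitor $(\curl\ee)|_\ome+\vv_\edge$ are legitimate reorganizations. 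Two points, however, need attention.

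First, the ``exact edge-patch decomposition'' that you yourself flag as the main obstacle is resolved in the paper by combining the algebraic partition of unity $\sum_{\edge\in\EE_h}\ppsi_\edge\otimes\ttau_\edge=\mathbb I$ (so that $\ww=\sum_{\edge}(\ww\cdot\ttau_\edge)|_\ome\,\ppsi_\edge$ exactly) with the \emph{Galerkin orthogonality} of the residual to $\NN_0(\TT_h)\cap\HH_{\GD}(\ccurl,\Omega)\subset\VVh$, which is what makes the Poincar\'e correction free of charge: the constants $\overline{w_\edge}$ can be subtracted because the difference is $\sum_\edge\overline{w_\edge}\,\langle\RR,\ppsi_\edge\rangle=0$, since $\ppsi_\edge\in\VVh$ for every non-Dirichlet edge (its tangential trace is supported on the faces containing $\edge$, none of which lies in $\overline\GD$) and $\overline{w_\edge}=0$ for Dirichlet edges. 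Your proposal performs the correction but never explains why it does not change the value of the sum; without this orthogonality the decomposition is no longer exact and the argument does not close. Second, in the lower bound your corrector $\vv_\edge$ lies in $\HH_{\GeN}(\ccurl,\ome)\cap\HH_{\GeD}(\ddiv,\ome)$, i.e., the Poincar\'e--Friedrichs--Weber inequality is invoked with $\GeD$ and $\GeN$ interchanged relative to~\eqref{eq_local_poincare_vectorial}; the resulting constant is still controlled by $\kappa_\edge$ but is a priori not the $\CPVe$ entering the definition~\eqref{eq_definition_osc} of $\osc_\edge$, so as written you prove~\eqref{eq_lower_bound} with a sibling oscillation term. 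The paper sidesteps this by correcting the \emph{test function} instead of the competitor: for $\vv\in\HH_{\GeD}(\ccurl,\ome)$ it subtracts $\grad q$ with $q\in H^1_{\GeD}(\ome)$, so that $\tvv=\vv-\grad q$ lands exactly in the configuration of~\eqref{eq_local_poincare_vectorial} and $(\jj-\jj_h^\edge,\grad q)_\ome=0$; this data-oscillation lemma then serves both bounds at once.
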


\subsection{Comments}

A few comments about Theorem~\ref{theorem_aposteriori} are in order.

\begin{itemize}

\item The constant $\Clift$ from~\eqref{eq_estimate_lift} can be taken as $1$ for convex domains
$\Omega$ and if either $\GD$ or $\GN$ is empty. In the general case however, we do not know
the value of this constant. The presence of the constant $\Clift$ is customary in a posteriori
error analysis of \revision{the curl--curl problem}, it appears, e.g., in Lemma 3.10
of~\cite{Nic_Creus_a_post_Maxw_03} and Assumption 2 of~\cite{Beck_Hipt_Hopp_Wohl_a_post_Maxw_00}.

\item The constant $\Cconte$ defined in~\eqref{eq_definition_Ccont} can be fully computed
in practical implementations. Indeed, computable values of Poincar\'e's constant $\CPe$
from~\eqref{eq_local_poincare} \revision{are discussed in, \eg,
\cite{Chua_Whee_est_Poin_06, Vees_Verf_Poin_stars_12},
see also the concise discussion in~\cite{Blech_Mal_Voh_loc_res_NL_20}; $\CPe$ can be taken
as $1/\pi$ for convex interior patches and as $1$ for most Dirichlet boundary patches.}
Recall also that $\Cconte$ only depends on the shape-regularity parameter $\kappa_\edge$
of the edge patch $\TTe$.

\item A computable upper bound on the constant $\Cste$ from~\eqref{eq_stab} can be obtained
by proceeding as in~\cite[Lemma~3.23]{Ern_Voh_p_rob_15}. The crucial property is again that
$\Cste$ can be uniformly bounded by the shape-regularity parameter $\kappa_\edge$ of the edge
patch $\TTe$.

\item The key feature of the error estimators of Theorem~\ref{theorem_aposteriori} is their
polynomial-degree-ro\-bust\-ness (or, shortly, $p$-robustness). This suggests to use
them in $hp$-adaptation strategies, \cf, \eg,
\cite{Dan_Ern_Sme_Voh_guar_red_18,Demk_hp_book_07,Schwab_hp_98}
and the references therein.

\item In contrast to~\cite{Braess_Scho_a_post_edge_08, Ged_Gee_Per_a_post_Maxw_19,Ged_Gee_Per_Sch_post_Maxw_20, Licht_FEEC_a_post_H_curl_19},
we do not obtain here an equilibrated flux, \ie, a piecewise polynomial $\hh_h^\star$ in the
global space $\NN_p(\TT_h) \cap \HH_{\GN}(\ccurl,\Omega)$ satisfying, for piecewise polynomial
$\jj$, $\curl \hh_h^\star = \jj$. We only obtain from~\eqref{eq_definition_estimator_2}
or~\eqref{eq_definition_estimator_sweep_2} that
$\hh_h^{\edge,\star} \in \NN_p(\TTe) \cap \HH_{\GeN}(\ccurl,\ome)$ and
$\curl \hh_h^{\edge,\star} = \jj$ locally in every edge patch $\TTe$
and similarly for $\hh_h^{\edge,\heartsuit}$, but we do not build
a $\HH_{\GN}(\ccurl,\Omega)$-conforming discrete field;
we call this process \revision{broken patchwise} equilibration.

\item The upper bound~\eqref{eq_upper_bound} does not come from the Prager--Synge
inequality~\eqref{eq_PS} and is typically larger than those obtained from~\eqref{eq_PS}
with an equilibrated flux $\hh_h^\star \in \NN_p(\TT_h) \cap \HH_{\GN}(\ccurl,\Omega)$,
because of the presence of the multiplicative factors $\sqrt{6} \Clift \Cconte$.
On the other hand, it is typically cheaper to compute the upper bound~\eqref{eq_upper_bound}
than those based on an equilibrated flux since
1)
the problems~\eqref{eq_definition_estimator} and~\eqref{eq_definition_estimator_sweep} involve
edge patches, whereas full equilibration would require solving problems also on vertex patches
which are larger than edge patches;
2)
the error estimators are computed in one stage only solving the
problems~\eqref{eq_definition_estimator_2} or~\eqref{eq_definition_estimator_sweep_2};
3)
\revision{
the broken patchwise equilibration procedure enables the construction of a $p$-robust error
estimator using polynomials of degree $p$, in contrast to the usual procedure requiring the
use of polynomials of degree $p+1$,
\cf~\cite{Brae_Pill_Sch_p_rob_09, Ern_Voh_p_rob_15, Ern_Voh_p_rob_3D_20};
the reason is that the usual procedure involves multiplication by the ``hat function''
$\psi_{\boldsymbol a}$ inside the estimators, which increases the polynomial degree by one,
whereas the current procedure only encapsulates an operation featuring $\ppsi_\edge$
into the multiplicative constant $\Cconte$, see~\eqref{eq_definition_Ccont}.}

\item The sequential sweep through the patch in~\eqref{eq_definition_estimator_sweep_2}
eliminates the patchwise problems~\eqref{eq_definition_estimator_2} and leads instead to
merely elementwise problems. These are much cheaper than~\eqref{eq_definition_estimator_2},
and, in particular, for $p=0$, \ie, for lowest-order N\'ed\'elec elements
in~\eqref{eq_maxwell_discrete} with one unknown per edge, \revision{they can be made} explicit.
\revision{Indeed, there is only one unknown  in~\eqref{eq_definition_estimator_sweep_2} for each
tetrahedron $K \in \TTe$ if $K$ is not the first or the last tetrahedron in the sweep. In the
last tetrahedron, there is no unknown left except if it contains a face that lies in $\GeD$,
in which case there is also only one unknown in~\eqref{eq_definition_estimator_sweep_2}. If the
first tetrahedron contains a face that lies in $\GeN$, there is again only one unknown
in~\eqref{eq_definition_estimator_sweep_2}. Finally, if the first tetrahedron does not contain
a face that lies in $\GeN$, it is possible, instead of $\FF = \emptyset$, to consider the
set $\FF$ formed by the face $F$ that either 1) lies in $\GeD$ (if any) or 2) is shared with
the last element and to employ for the boundary datum $\rr_{\FF}$
in~\eqref{eq_definition_estimator_sweep_2} the 1) value or 2) the mean value of the tangential
trace $\curl \ee_h$ on $F$. This again leads to only one unknown
in~\eqref{eq_definition_estimator_sweep_2}, with all the theoretical properties maintained.}
\end{itemize}

\section{Numerical experiments}
\label{sec_num}

In this section, we present some numerical experiments to illustrate the a posteriori
error estimates from Theorem~\ref{theorem_aposteriori} and its use within an adaptive
mesh refinement procedure. We consider a test case with a smooth solution and a test
case with a solution featuring an edge singularity.

\revision{Below, we rely on the indicator $\eta_\edge$ evaluated
using~\eqref{eq_definition_estimator}, \ie, involving the edge-patch
solves~\eqref{broken_equil}. Moreover, we let
\begin{equation} \label{eq_ests}
\left (\eta_{\rm ofree}\right )^2 \eq 6 \sum_{e \in \EE_h} \left (\Cconte \eta_\edge\right )^2, \qquad
\left (\eta_{{\rm cofree}}\right )^2 \eq \sum_{e \in \EE_h} \left (\eta_\edge\right )^2.
\end{equation}
Here, $\eta_{\rm ofree}$ corresponds to an ``oscillation-free'' error estimator, obtained
by discarding the oscillation terms $\osc_\edge$ in~\eqref{eq_upper_bound}, whereas
$\eta_{{\rm cofree}}$ corresponds to a ``constant-and-oscillation-free'' error estimator,
discarding in addition the multiplicative constants $\sqrt{6} \Clift$ and $\Cconte$}.

\subsection{Smooth solution in the unit cube}

We first consider an example in the unit cube $\Omega \eq (0,1)^3$
and Neumann boundary conditions, $\GN \eq \partial\Omega$ in~\eqref{eq_maxwell_strong}
and its weak form~\eqref{eq_maxwell_weak}. The analytical solution reads
\begin{equation*}
\ee(\xx) \eq \left (
\begin{array}{c}
 \sin(\pi \xx_1)\cos(\pi \xx_2)\cos(\pi \xx_3)
\\
-\cos(\pi \xx_1)\sin(\pi \xx_2)\cos(\pi \xx_3)
\\
0
\end{array}
\right ).
\end{equation*}
One checks that $\div \ee = 0$ and that
\begin{equation*}
\jj(\xx) \eq (\curl \curl \ee)(\xx) =
3\pi^2 \left (
\begin{array}{c}
 \sin(\pi \xx_1)\cos(\pi \xx_2)\cos(\pi \xx_3)
\\
-\cos(\pi \xx_1)\sin(\pi \xx_2)\cos(\pi \xx_3)
\\
0
\end{array}
\right ).
\end{equation*}
We notice that $\Clift = 1$ since $\Omega$ is convex.

We first propose an ``$h$-convergence'' test case in which, for a fixed polynomial degree $p$,
we study the behavior of the N\'ed\'elec approximation $\ee_h$
solving~\eqref{eq_maxwell_discrete} and of the error estimator of
Theorem~\ref{theorem_aposteriori}. We consider a sequence of meshes obtained by first
splitting the unit cube into an $N \times N \times N$ Cartesian grid and then splitting
each of the small cubes into six tetrahedra, with the resulting mesh size $h = \sqrt{3}/N$.
More precisely, each resulting edge patch is convex here, so that the constant $\CPe$
in~\eqref{eq_definition_Ccont} can be taken as $1/\pi$ for all internal patches, see the
discussion in Section~\ref{sec_Poinc}. Figure~\ref{figure_unit_cube_hconv} presents the results.
The top-left panel shows that the expected convergence rates of $\ee_h$ are obtained for
$p=0,\dots,3$. The top-right panel presents the local efficiency of the error estimator
based on the \revision{indicator} $\eta_\edge$ evaluated using~\eqref{eq_definition_estimator}.
We see that it is very good, the ratio of the patch indicator to the patch error being at
most $2$ for $p=0$, and close to $1$ for higher-order
polynomials. This seems to indicate that the constant $\Cste$ in~\eqref{eq_stab} is rather small.
The bottom panels of Figure~\ref{figure_unit_cube_hconv} report on the global efficiency of the
error indicators \revision{$\eta_{\rm ofree}$ and $\eta_{{\rm cofree}}$ from~\eqref{eq_ests}.}
As shown in the bottom-right panel, the global
efficiency of $\eta_{{\rm cofree}}$ is independent of the mesh size.
The bottom-left panel shows a slight dependency of the global efficiency of
$\eta_{\rm ofree}$ on the mesh size, but this is only due to the fact that
Poincar\'e's constants differ for boundary and internal patches.
These two panels show that the efficiency actually slightly improves as the polynomial
degree is increased, highlighting the $p$-robustness of the proposed error estimator.
\revision{We also notice that the multiplicative factor $\sqrt{6}\Cconte$ can lead to
some error overestimation.}

\begin{figure}
\begin{minipage}{.45\linewidth}
\begin{tikzpicture}
\begin{axis}[
	xmode = log,
	ymode = log,
	x dir = reverse,
	width=\linewidth,
	xlabel=$h/\sqrt{3}$,
	ylabel=$\|\curl(\ee-\ee_h)\|_\Omega$,
	xtick={1,.5,.25,.125,.0625,.03125},
	xticklabels={1,1/2,1/4,1/8,1/16,1/32}
]

\plot[color=black,mark=*]    table[x expr=1/\thisrow{N},y=true_err]%
	{figures/numerics/unit_cube/data/hconv_P0.dat};
\plot[color=blue,mark=o]    table[x expr=1/\thisrow{N},y=true_err]%
	{figures/numerics/unit_cube/data/hconv_P1.dat};
\plot[color=red,mark=square]    table[x expr=1/\thisrow{N},y=true_err]%
	{figures/numerics/unit_cube/data/hconv_P2.dat};
\plot[color=green,mark=square*]    table[x expr=1/\thisrow{N},y=true_err]%
	{figures/numerics/unit_cube/data/hconv_P3.dat};

\SlopeTriangle{.7}{-.2}{.7}{-1}{$h$}{}

\SlopeTriangle{.7}{-.2}{.45}{-2}{$h^2$}{}

\SlopeTriangle{.6}{-.1}{.35}{-3}{$h^3$}{}

\SlopeTriangle{.5}{-.1}{.175}{-4}{$h^4$}{}

\end{axis}
\end{tikzpicture}
\end{minipage}
\quad\begin{minipage}{.45\linewidth}
\begin{tikzpicture}
\begin{axis}[
	xmode = log,
	x dir = reverse,
	width=\linewidth,
	xlabel=$h/\sqrt{3}$,
	ylabel=$\max_{e \in \EE_h} \revision{\eta_\edge}/\|\curl(\ee-\ee_h)\|_\ome$,
	xtick={1,.5,.25,.125,.0625,.03125},
	xticklabels={1,1/2,1/4,1/8,1/16,1/32}
]

\plot[color=black,mark=*]       table[x expr=1/\thisrow{N},y expr=1/\thisrow{min_ratio}]%
	{figures/numerics/unit_cube/data/hconv_P0.dat};
\plot[color=blue,mark=o]        table[x expr=1/\thisrow{N},y expr=1/\thisrow{min_ratio}]%
	{figures/numerics/unit_cube/data/hconv_P1.dat};
\plot[color=red,mark=square]    table[x expr=1/\thisrow{N},y expr=1/\thisrow{min_ratio}]%
	{figures/numerics/unit_cube/data/hconv_P2.dat};
\plot[color=green,mark=square*] table[x expr=1/\thisrow{N},y expr=1/\thisrow{min_ratio}]%
	{figures/numerics/unit_cube/data/hconv_P3.dat};
\end{axis}
\end{tikzpicture}
\end{minipage}

\begin{minipage}{.45\linewidth}
\begin{tikzpicture}
\begin{axis}[
	xmode = log,
	x dir = reverse,
	width=\linewidth,
	xlabel=$h/\sqrt{3}$,
	ylabel=$\revision{\eta_{\rm ofree}}/\|\curl(\ee-\ee_h)\|_\Omega$,
	xtick={1,.5,.25,.125,.0625,.03125},
	xticklabels={1,1/2,1/4,1/8,1/16,1/32}
]

\plot[color=black,mark=*]       table[x expr=1/\thisrow{N},y expr=2.449489*\thisrow{err_cont}/\thisrow{true_err}]%
	{figures/numerics/unit_cube/data/hconv_P0.dat};
\plot[color=blue,mark=o]        table[x expr=1/\thisrow{N},y expr=2.449489*\thisrow{err_cont}/\thisrow{true_err}]%
	{figures/numerics/unit_cube/data/hconv_P1.dat};
\plot[color=red,mark=square]    table[x expr=1/\thisrow{N},y expr=2.449489*\thisrow{err_cont}/\thisrow{true_err}]%
	{figures/numerics/unit_cube/data/hconv_P2.dat};
\plot[color=green,mark=square*] table[x expr=1/\thisrow{N},y expr=2.449489*\thisrow{err_cont}/\thisrow{true_err}]%
	{figures/numerics/unit_cube/data/hconv_P3.dat};
\end{axis}
\end{tikzpicture}
\end{minipage}
\quad\begin{minipage}{.45\linewidth}
\begin{tikzpicture}
\begin{axis}[
	xmode = log,
	x dir = reverse,
	width=\linewidth,
	xlabel=$h/\sqrt{3}$,
	ylabel=$\revision{\eta_{\rm cofree}}/\|\curl(\ee-\ee_h)\|_\Omega$,
	xtick={1,.5,.25,.125,.0625,.03125},
	xticklabels={1,1/2,1/4,1/8,1/16,1/32}
]

\plot[color=black,mark=*]       table[x expr=1/\thisrow{N},y expr=\thisrow{err_sum}/\thisrow{true_err}]%
	{figures/numerics/unit_cube/data/hconv_P0.dat};
\plot[color=blue,mark=o]        table[x expr=1/\thisrow{N},y expr=\thisrow{err_sum}/\thisrow{true_err}]%
	{figures/numerics/unit_cube/data/hconv_P1.dat};
\plot[color=red,mark=square]    table[x expr=1/\thisrow{N},y expr=\thisrow{err_sum}/\thisrow{true_err}]%
	{figures/numerics/unit_cube/data/hconv_P2.dat};
\plot[color=green,mark=square*] table[x expr=1/\thisrow{N},y expr=\thisrow{err_sum}/\thisrow{true_err}]%
	{figures/numerics/unit_cube/data/hconv_P3.dat};
\end{axis}
\end{tikzpicture}
\end{minipage}

\begin{tikzpicture}
\draw[color=black] (0,0) -- (1,0);
\draw[fill=black] (.5,0) circle (.1);
\draw (1,0) node[anchor=west] {$p=0$};

\draw[color=blue] (0+3,0) -- (1+3,0);
\draw[color=blue] (.5+3,0) circle (.1);
\draw (1+3,0) node[anchor=west] {$p=1$};

\draw[color=red] (0+6,0) -- (1+6,0);
\draw[color=red] (.5-.1+6,0-.1) rectangle (.5+.1+6,0+.1);
\draw (1+6,0) node[anchor=west] {$p=2$};

\draw[color=green] (0+9,0) -- (1+9,0);
\draw[color=green,fill=green] (.5-.1+9,0-.1) rectangle (.5+.1+9,0+.1);
\draw (1+9,0) node[anchor=west] {$p=3$};
\end{tikzpicture}

\caption{$h$-convergence for the unit cube experiment}
\label{figure_unit_cube_hconv}
\end{figure}
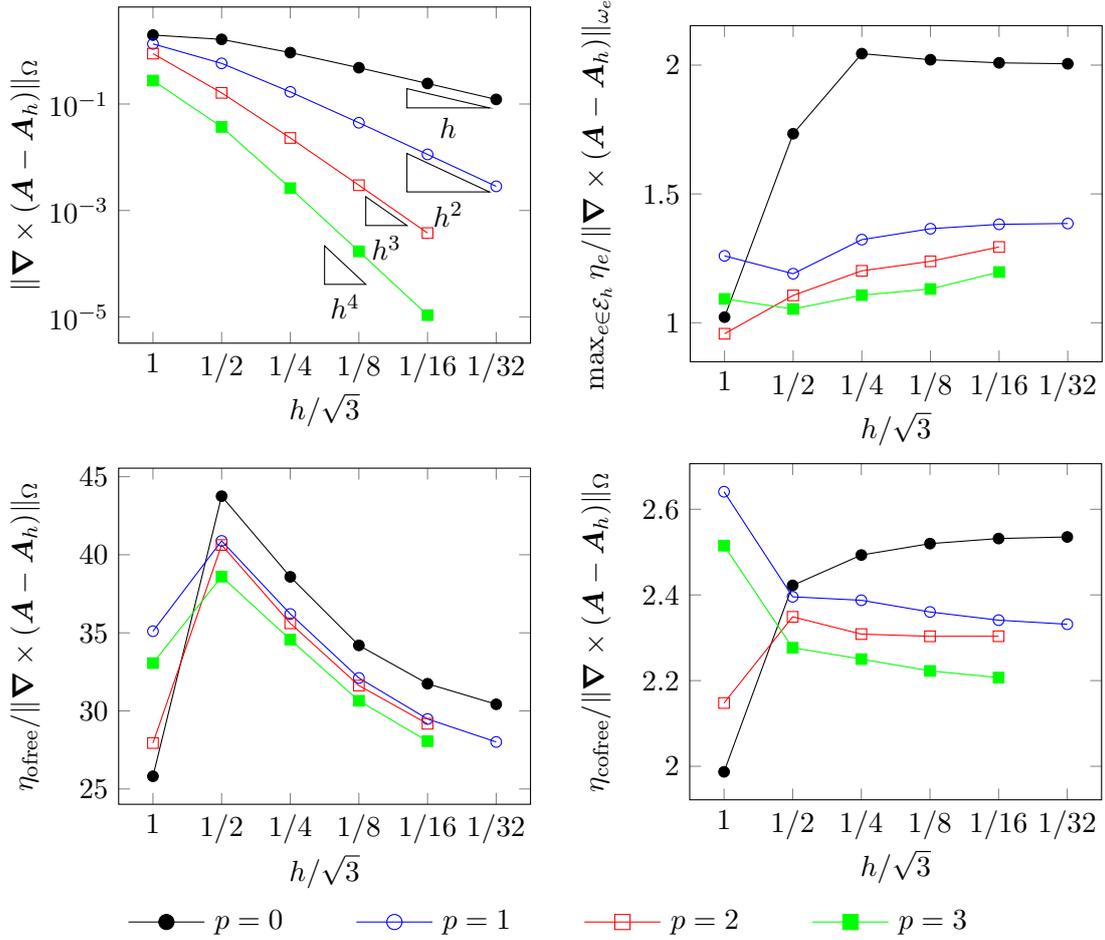

We then present a ``$p$-convergence'' test case where for a fixed mesh, we study
the behavior of the solution and of the error estimator when the polynomial degree
$p$ is increased. We provide this analysis for four different meshes. The three first meshes
are structured as previously described with $N=1,2$, and $4$, whereas the last
mesh is unstructured. The unstructured mesh has $358$ elements, $1774$ edges,
and $h = 0.37$. Figure~\ref{figure_unit_cube_pconv} presents the results.
The top-left panel shows an exponential convergence rate as $p$ is increased for all the
meshes, which is in agreement with the theory, since the solution is analytic.
The top-right panel shows that the local patch-by-patch
efficiency is very good, and seems to tend to $1$ as $p$ increases.
The bottom-right panel shows that the global efficiency of $\eta_{{\rm cofree}}$
also slightly improves as $p$ is increased, and it seems to be rather
independent of the mesh. The bottom-left panel shows that the global efficiency of
$\eta_{\rm ofree}$ is significantly worse on the unstructured mesh. This is because
in the absence of convex patches, we employ for $\CPe$ the estimate
from~\cite{Vees_Verf_Poin_stars_12} instead of the constant $1/\pi$. We believe that this
performance could be improved by providing sharper Poincar\'e constants.

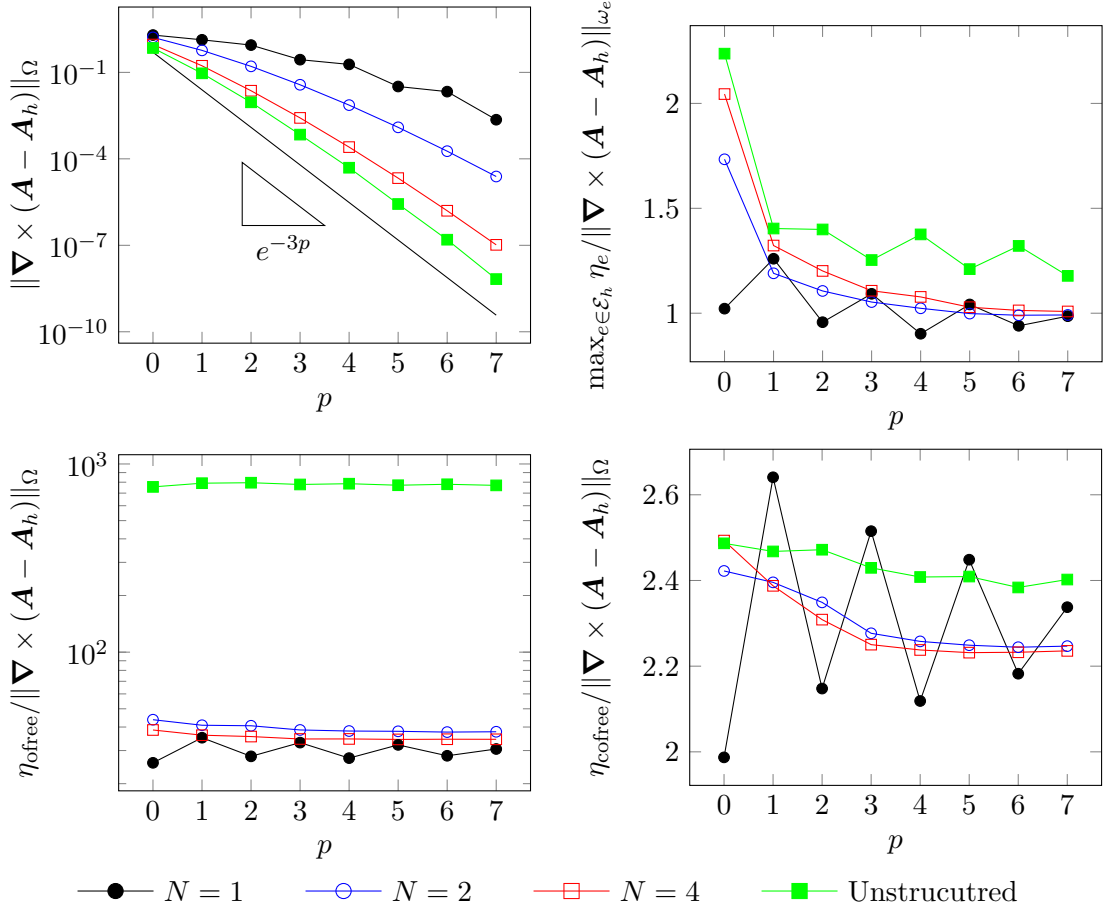
\begin{figure}
\begin{minipage}{.45\linewidth}
\begin{tikzpicture}
\begin{axis}[
	ymode = log,
	width=\linewidth,
	xlabel=$p$,
	ylabel=$\|\curl(\ee-\ee_h)\|_\Omega$,
	xtick={0,1,2,3,4,5,6,7}
]

\plot[color=black,mark=*]    table[x=deg,y=true_err]%
	{figures/numerics/unit_cube/data/pconv_N01.dat};
\plot[color=blue,mark=o]    table[x=deg,y=true_err]%
	{figures/numerics/unit_cube/data/pconv_N02.dat};
\plot[color=red,mark=square]    table[x=deg,y=true_err]%
	{figures/numerics/unit_cube/data/pconv_N04.dat};
\plot[color=green,mark=square*]    table[x=deg,y=true_err]%
	{figures/numerics/unit_cube/data/pconv_Nus.dat};

\addplot[domain=0:7]{10*exp(-3*(x+1))};

\SlopeTriangle{.3}{-.2}{.35}{-3}{$e^{-3p}$}{}

\end{axis}
\end{tikzpicture}
\end{minipage}
\quad\begin{minipage}{.45\linewidth}
\begin{tikzpicture}
\begin{axis}[
	width=\linewidth,
	xlabel=$p$,
	ylabel=$\max_{e \in \EE_h} \revision{\eta_\edge}/\|\curl(\ee-\ee_h)\|_\ome$,
	xtick={0,1,2,3,4,5,6,7}
]
\plot[color=black,mark=*]    table[x=deg,y expr=1/\thisrow{min_ratio}]%
	{figures/numerics/unit_cube/data/pconv_N01.dat};
\plot[color=blue,mark=o]    table[x=deg,y expr=1/\thisrow{min_ratio}]%
	{figures/numerics/unit_cube/data/pconv_N02.dat};
\plot[color=red,mark=square]    table[x=deg,y expr=1/\thisrow{min_ratio}]%
	{figures/numerics/unit_cube/data/pconv_N04.dat};
\plot[color=green,mark=square*]    table[x=deg,y expr=1/\thisrow{min_ratio}]%
	{figures/numerics/unit_cube/data/pconv_Nus.dat};

\end{axis}
\end{tikzpicture}
\end{minipage}

\begin{minipage}{.45\linewidth}
\begin{tikzpicture}
\begin{axis}[
	width=\linewidth,
	xlabel=$p$,
	ylabel=$\revision{\eta_{\rm ofree}}/\|\curl(\ee-\ee_h)\|_\Omega$,
	xtick={0,1,2,3,4,5,6,7},
	ymode=log
]
\plot[color=black,mark=*]    table[x=deg,y expr=2.449489*\thisrow{err_cont}/\thisrow{true_err}]%
	{figures/numerics/unit_cube/data/pconv_N01.dat};
\plot[color=blue,mark=o]    table[x=deg,y expr=2.449489*\thisrow{err_cont}/\thisrow{true_err}]%
	{figures/numerics/unit_cube/data/pconv_N02.dat};
\plot[color=red,mark=square]    table[x=deg,y expr=2.449489*\thisrow{err_cont}/\thisrow{true_err}]%
	{figures/numerics/unit_cube/data/pconv_N04.dat};
\plot[color=green,mark=square*]    table[x=deg,y expr=2.449489*\thisrow{err_cont}/\thisrow{true_err}]%
	{figures/numerics/unit_cube/data/pconv_Nus.dat};

\end{axis}
\end{tikzpicture}
\end{minipage}
\quad\begin{minipage}{.45\linewidth}
\begin{tikzpicture}
\begin{axis}[
	width=\linewidth,
	xlabel=$p$,
	ylabel=$\revision{\eta_{\rm cofree}}/\|\curl(\ee-\ee_h)\|_\Omega$,
	xtick={0,1,2,3,4,5,6,7}
]
\plot[color=black,mark=*]    table[x=deg,y expr=\thisrow{err_sum}/\thisrow{true_err}]%
	{figures/numerics/unit_cube/data/pconv_N01.dat};
\plot[color=blue,mark=o]    table[x=deg,y expr=\thisrow{err_sum}/\thisrow{true_err}]%
	{figures/numerics/unit_cube/data/pconv_N02.dat};
\plot[color=red,mark=square]    table[x=deg,y expr=\thisrow{err_sum}/\thisrow{true_err}]%
	{figures/numerics/unit_cube/data/pconv_N04.dat};
\plot[color=green,mark=square*]    table[x=deg,y expr=\thisrow{err_sum}/\thisrow{true_err}]%
	{figures/numerics/unit_cube/data/pconv_Nus.dat};

\end{axis}
\end{tikzpicture}
\end{minipage}

\begin{tikzpicture}
\draw[color=black] (0,0) -- (1,0);
\draw[fill=black] (.5,0) circle (.1);
\draw (1,0) node[anchor=west] {$N=1$};

\draw[color=blue] (0+3,0) -- (1+3,0);
\draw[color=blue] (.5+3,0) circle (.1);
\draw (1+3,0) node[anchor=west] {$N=2$};

\draw[color=red] (0+6,0) -- (1+6,0);
\draw[color=red] (.5-.1+6,0-.1) rectangle (.5+.1+6,0+.1);
\draw (1+6,0) node[anchor=west] {$N=4$};

\draw[color=green] (0+9,0) -- (1+9,0);
\draw[color=green,fill=green] (.5-.1+9,0-.1) rectangle (.5+.1+9,0+.1);
\draw (1+9,0) node[anchor=west] {Unstrucutred};
\end{tikzpicture}

\caption{$p$-convergence for the unit cube experiment}
\label{figure_unit_cube_pconv}
\end{figure}

\subsection{Singular solution in an L-shaped domain}

We now turn our attention to an L-shaped domain featuring a singular solution.
Specifically, $\Omega \eq L \times (0,1)$, where
\begin{equation*}
L \eq \left \{
\xx = (r\cos\theta,r\sin\theta) \ | \
|\xx_1|,|\xx_2| \leq 1 \quad 0 \leq \theta \leq 3\pi/2
\right \},
\end{equation*}
see Figure~\ref{figure_lshape_errors}, where $L$ is represented.
\revision{We consider the case $\GD \eq \partial \Omega$, and t}he solution reads
$\ee(\xx) \eq \big(0,0,\chi(r) r^\alpha \sin(\alpha \theta)\big)^{\mathrm{T}}$,
where $\alpha \eq 3/2$, $r^2 \eq |\xx_1|^2 + |\xx_2|^2$, $(\xx_1,\xx_2) = r(\cos\theta,\sin\theta)$,
and $\chi:(0,1) \to \mathbb R$ is a smooth cutoff function such that $\chi = 0$ in a neighborhood of
$1$. We emphasize that $\div \ee = 0$ and that, since
$\Delta \left (r^\alpha \sin(\alpha \theta)\right ) = 0$ near the origin, the right-hand side
$\jj$ associated with $\ee$ belongs to $\HH(\ddiv,\Omega)$.

We use an adaptive mesh-refinement strategy based on D\"orfler's
marking~\cite{Dorf_cvg_FE_96}. The initial mesh we employ for $p=0$ and $p=1$
consists of $294$ elements and $1418$ edges with $h=0.57$, whereas a mesh with $23$ elements,
$86$ edges, and $h=2$ is employed for $p=2$ and $3$. The meshing package {\tt MMG3D} is employed to generate
the sequence of adapted meshes~\cite{Dobr_mmg3d}.
Figure~\ref{figure_lshape_conv} shows the convergence
histories of the adaptive algorithm for different values of $p$. In the top-left panel,
we observe the optimal convergence rate (limited to $N_{\rm dofs}^{2/3}$ for isotropic
elements in the presence of an edge singularity).
We employ the indicator $\eta_{{\rm cofree}}$ defined in~\eqref{eq_ests}.
The top-right and bottom-left panels respectively present the
\revision{local} and \revision{global} efficiency
indices. In both cases, the efficiency is good considering that the mesh is fully
unstructured with localized features. We also emphasize that the efficiency does not
deteriorate when $p$ increases.

Finally, Figure~\ref{figure_lshape_errors} depicts the estimated and the actual errors at the
last iteration of the adaptive algorithm. The face on the top of the domain $\Omega$ is
represented, and the colors are associated with the edges of the mesh. The left panels correspond
to the values of the estimator $\eta_\edge$ of~\eqref{eq_definition_estimator}, whereas the value
of $\|\curl(\ee-\ee_h)\|_\ome$ is represented in the right panels. Overall, this figure shows
excellent agreement between the estimated and actual error distribution.

\begin{figure}
\begin{minipage}{.45\linewidth}
\begin{tikzpicture}
\begin{axis}[
	xmode = log,
	ymode = log,
	width=\linewidth,
	xlabel=$N_{\rm dofs}$,
	ylabel=$\|\curl(\ee-\ee_h)\|_\Omega$
]

\plot[color=black,mark=*] table[x expr=\thisrow{nr_dofs},y=true_err]%
	{figures/numerics/lshape/data/P0.dat};

\plot[color=blue,mark=o] table[x expr=\thisrow{nr_dofs},y=true_err]%
	{figures/numerics/lshape/data/P1.dat};

\plot[color=red,mark=square] table[x expr=\thisrow{nr_dofs},y=true_err]%
	{figures/numerics/lshape/data/P2.dat};

\plot[color=green,mark=square*] table[x expr=\thisrow{nr_dofs},y=true_err]%
	{figures/numerics/lshape/data/P3.dat};

\SlopeTriangle{.4}{-.1}{.2}{-.6666}{$N_{\rm dofs}^{2/3}$}{}

\SlopeTriangle{.7}{-.1}{.4}{-.3333}{$N_{\rm dofs}^{1/3}$}{}

\end{axis}
\end{tikzpicture}
\end{minipage}
\quad\begin{minipage}{.45\linewidth}
\begin{tikzpicture}
\begin{axis}[
	xmode=log,
	width=\linewidth,
	xlabel=$N_{\rm dofs}$,
	ylabel=$\max_{e \in \EE_h} \eta_e/\|\curl(\ee-\ee_h)\|_\ome$
]

\plot[color=black,mark=*] table[x expr=\thisrow{nr_dofs},y expr=1/\thisrow{min_ratio}]%
	{figures/numerics/lshape/data/P0.dat};

\plot[color=blue,mark=o] table[x expr=\thisrow{nr_dofs},y expr=1/\thisrow{min_ratio}]%
	{figures/numerics/lshape/data/P1.dat};

\plot[color=red,mark=square] table[x expr=\thisrow{nr_dofs},y expr=1/\thisrow{min_ratio}]%
	{figures/numerics/lshape/data/P2.dat};

\plot[color=green,mark=square*] table[x expr=\thisrow{nr_dofs},y expr=1/\thisrow{min_ratio}]%
	{figures/numerics/lshape/data/P3.dat};


\end{axis}
\end{tikzpicture}
\end{minipage}

\begin{minipage}{.45\linewidth}
\begin{tikzpicture}
\begin{axis}[
	xmode=log,
	width=\linewidth,
	xlabel=$N_{\rm dofs}$,
	ylabel=$\eta_{\rm cofree}/\|\curl(\ee-\ee_h)\|_\Omega$
]

\plot[color=black,mark=*] table[x expr=\thisrow{nr_dofs},y expr=\thisrow{est_err}/\thisrow{true_err}]%
	{figures/numerics/lshape/data/P0.dat};

\plot[color=blue,mark=o] table[x expr=\thisrow{nr_dofs},y expr=\thisrow{est_err}/\thisrow{true_err}]%
	{figures/numerics/lshape/data/P1.dat};

\plot[color=red,mark=square] table[x expr=\thisrow{nr_dofs},y expr=\thisrow{est_err}/\thisrow{true_err}]%
	{figures/numerics/lshape/data/P2.dat};

\plot[color=green,mark=square*] table[x expr=\thisrow{nr_dofs},y expr=\thisrow{est_err}/\thisrow{true_err}]%
	{figures/numerics/lshape/data/P3.dat};


\end{axis}
\end{tikzpicture}
\end{minipage}
\quad\begin{minipage}{.45\linewidth}
\hspace{1cm}
\begin{tikzpicture}
\draw[color=black] (0,0) -- (1,0);
\draw[fill=black] (.5,0) circle (.1);
\draw (1,0) node[anchor=west] {$p=0$};

\draw[color=blue] (0,0+1) -- (1,0+1);
\draw[blue] (.5,0+1) circle (.1);
\draw (1,0+1) node[anchor=west] {$p=1$};

\draw[color=red] (0,0+2) -- (1,0+2);
\draw[color=red] (.5-.1,0+2-.1) rectangle (.5+.1,0+2+.1);
\draw (1,0+2) node[anchor=west] {$p=2$};

\draw[color=green] (0,0+3) -- (1,0+3);
\draw[color=green,fill=green] (.5-.1,0+3-.1) rectangle (.5+.1,0+3+.1);
\draw (1,0+3) node[anchor=west] {$p=3$};
\end{tikzpicture}
\end{minipage}

\caption{Convergence histories for the L-shaped domain experiment}
\label{figure_lshape_conv}
\end{figure}
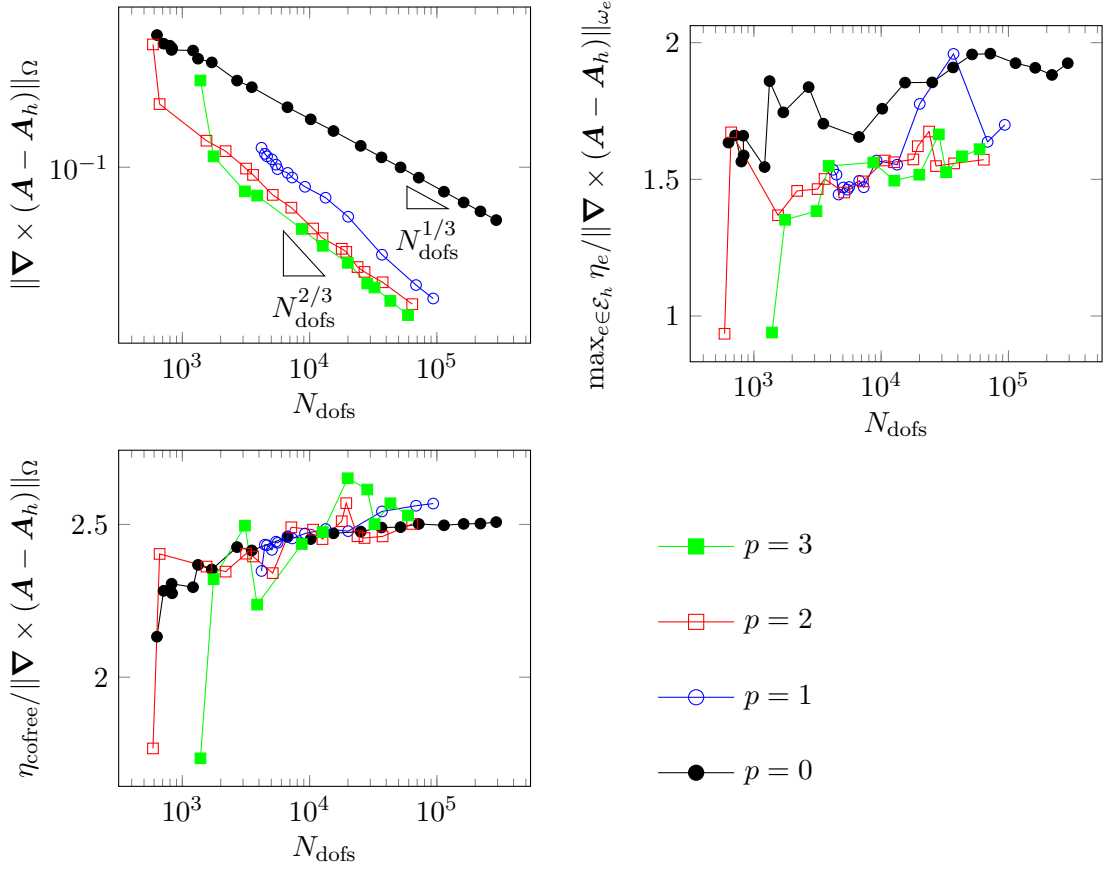

\begin{figure}
\includegraphics[width=\linewidth]{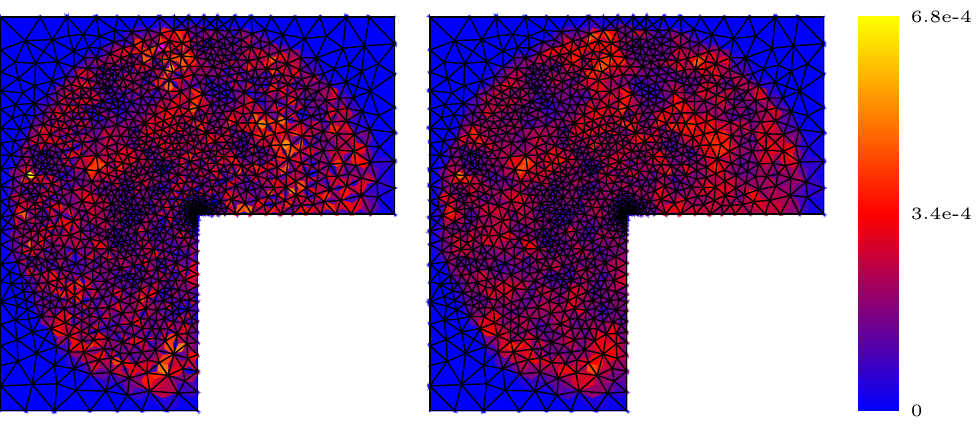}

$p=0$

\includegraphics[width=\linewidth]{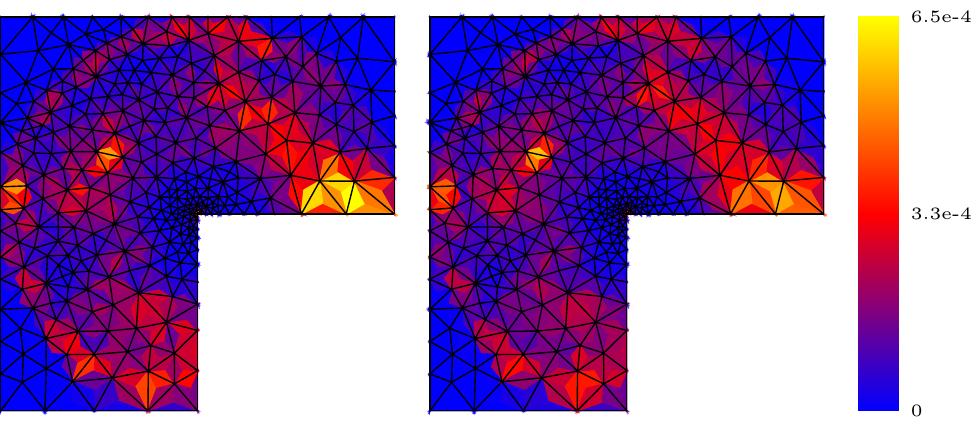}

$p=1$

\includegraphics[width=\linewidth]{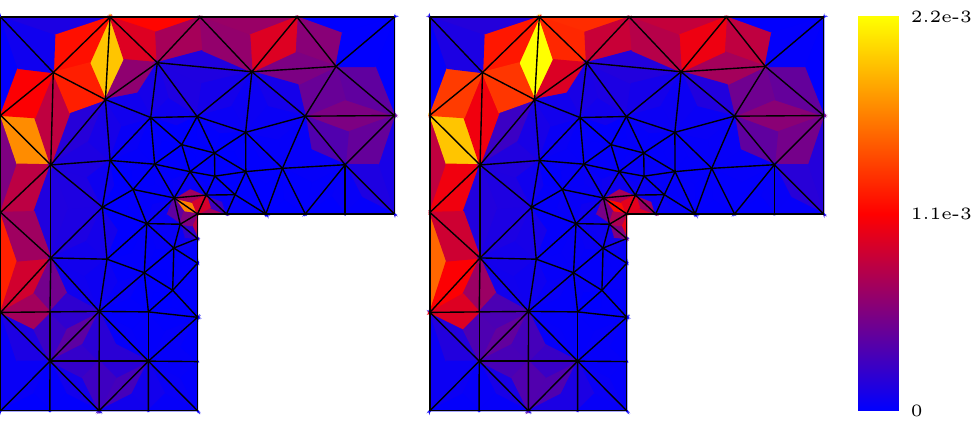}

$p=3$

\caption{Estimated error (left) and actual error (right) for the L-shaped domain experiment}
\label{figure_lshape_errors}
\end{figure}

\section{Proof of Theorem~\ref{theorem_aposteriori} ($p$-robust a posteriori error estimate)}
\label{sec_proof_a_post}

In this section, we prove Theorem~\ref{theorem_aposteriori}.

\subsection{Residuals}

Recall that $\ee_h \in \VVh \subset \HH_{\GD}(\ccurl,\Omega)$ solves
\eqref{eq_maxwell_discrete} and satisfies~\eqref{eq_maxwell_discrete_II}. In
view of the characterization of the weak solution~\eqref{eq_maxwell_weak_II},
we define the residual $\RR \in (\HH_{\GD}(\ccurl,\Omega))'$ by setting
\begin{equation*}
\langle \RR,\vv \rangle
\eq
(\jj,\vv)_{\Omega} - (\curl \ee_h,\curl \vv)_{\Omega}
=
(\curl (\ee - \ee_h),\curl \vv)_{\Omega}
\quad
\forall \vv \in \HH_{\GD}(\ccurl,\Omega).
\end{equation*}
Taking $\vv = \ee - \ee_h$ and using a duality characterization,
we have the error--residual link
\begin{equation}
\label{eq_res}
\|\curl(\ee - \ee_h)\|_{\Omega}
=
\langle \RR,\ee-\ee_h\rangle^{1/2}
=
\sup_{\substack{
\vv \in \HH_{\GD}(\ccurl,\Omega)\\
\|\curl \vv\|_{\Omega} = 1}
} \langle \RR,\vv \rangle.
\end{equation}
We will also employ local dual norms of the residual $\RR$.
Specifically, for each edge $\edge \in \EE_h$, we set
\begin{equation}
\label{eq_RR_ome}
\|\RR\|_{\star,\edge} \eq
\sup_{\substack{
\vv \in \HH_{\GeD}(\ccurl,\ome)\\
\|\curl \vv\|_{\ome} = 1}
}
\langle \RR,\vv \rangle.
\end{equation}
For each $\edge \in \EE_h$, we will also need an oscillation-free
residual $\RR_h^\edge \in \left (\HH_{\GeD}(\ccurl,\ome)\right )'$
defined using the projected right-hand side introduced in~\eqref{eq_definition_jj_h},
\begin{equation*}
\langle \RR_h^\edge,\vv \rangle
\eq
(\jj_h^\edge,\vv)_{\ome} - (\curl \ee_h,\curl \vv)_{\ome} \quad
\forall \vv \in \HH_{\GeD}(\ccurl,\ome).
\end{equation*}
We also employ the notation
\begin{equation*}
\|\RR^\edge_h\|_{\star,\edge} \eq
\sup_{\substack{
\vv \in \HH_{\GeD}(\ccurl,\ome)\\
\|\curl \vv\|_{\ome} = 1}}
\langle \RR_h^\edge,\vv \rangle
\end{equation*}
for the dual norm of $\RR_h^\edge$. Note that $\RR^\edge_h = \RR_{|\HH_{\GeD}(\ccurl,\ome)}$ whenever the source term $\jj$
is a piecewise $\RT_p(\TT_h)$ polynomial.

\subsection{Data oscillation}

Recalling the definition~\eqref{eq_definition_osc} of $\osc_\edge$, we
have the following comparison:

\begin{lemma}[Data oscillation]
The following holds true:
\begin{subequations}\begin{equation}
\label{eq_data_oscillations_lower_bound}
\|\RR_h^\edge\|_{\star,\edge} \leq \|\RR\|_{\star,\edge} + \osc_\edge
\end{equation}
and
\begin{equation}
\label{eq_data_oscillations_upper_bound}
\|\RR\|_{\star,\edge} \leq \|\RR_h^\edge\|_{\star,\edge} + \osc_\edge.
\end{equation}\end{subequations}
\end{lemma}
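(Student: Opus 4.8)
The plan is to derive both inequalities from a single uniform bound on the difference of the two residuals evaluated on unit-$\curl$-norm test fields. First I would note that for any $\vv\in\HH_{\GeD}(\ccurl,\ome)$, extended by zero to an element of $\HH_{\GD}(\ccurl,\Omega)$ (this is legitimate since the tangential trace of $\vv$ vanishes on $\GeD$, hence on every face of $\partial\ome$ not lying in $\GN$), the definitions of $\RR$ and $\RR_h^\edge$ give $\langle\RR,\vv\rangle - \langle\RR_h^\edge,\vv\rangle = (\jj - \jj_h^\edge,\vv)_\ome$. Consequently, once I show that $|(\jj - \jj_h^\edge,\vv)_\ome|\le\osc_\edge$ for every such $\vv$ with $\|\curl\vv\|_\ome = 1$, the claim follows: from $\langle\RR_h^\edge,\vv\rangle = \langle\RR,\vv\rangle - (\jj-\jj_h^\edge,\vv)_\ome \le \|\RR\|_{\star,\edge} + \osc_\edge$, taking the supremum over admissible $\vv$ yields \eqref{eq_data_oscillations_lower_bound}, and exchanging the roles of $\RR$ and $\RR_h^\edge$ yields \eqref{eq_data_oscillations_upper_bound}.

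The heart of the matter is to bound $(\jj - \jj_h^\edge,\vv)_\ome$ even though $\|\vv\|_\ome$ is \emph{not} controlled by $\|\curl\vv\|_\ome$, the gradient part of $\vv$ being arbitrary. To remove that gradient part I would use a Helmholtz-type decomposition adapted to the patch boundary: let $\phi$ solve $(\grad\phi,\grad\psi)_\ome = (\vv,\grad\psi)_\ome$ for all test functions $\psi$, in $H^1_0(\ome)$ if $\edge$ is an interior edge, resp.\ in $\{v\in H^1(\ome)\;|\;v=0\text{ on }\GeD\}$ if $\edge\in\EED_h$ (well-posedness follows from the Poincar\'e inequality on that space, $\GeD$ having positive surface measure). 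Set $\tvv\eq\vv-\grad\phi$. Then $\curl\tvv=\curl\vv$, so $\|\curl\tvv\|_\ome=1$; testing against compactly supported $\psi$ gives $\div\tvv=0$; testing against general $\psi$ gives $\tvv\cdot\nn_\ome=0$ on $\GeN$; and since $\phi$ vanishes on $\GeD$, the tangential trace of $\grad\phi$ vanishes there, so $\tvv\in\HH_{\GeD}(\ccurl,\ome)$. Hence $\tvv$ lies precisely in the space $\HH_{\GeD}(\ccurl,\ome)\cap\HH_{\GeN}(\ddiv,\ome)$ with $\div\tvv=0$ for which the Poincar\'e--Friedrichs--Weber inequality \eqref{eq_local_poincare_vectorial} applies, giving $\|\tvv\|_\ome\le\CPVe h_\ome$.

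It then remains to split $(\jj - \jj_h^\edge,\vv)_\ome = (\jj - \jj_h^\edge,\grad\phi)_\ome + (\jj - \jj_h^\edge,\tvv)_\ome$. The first term vanishes: integrating by parts, $(\jj - \jj_h^\edge,\grad\phi)_\ome = -(\div(\jj - \jj_h^\edge),\phi)_\ome + \langle(\jj - \jj_h^\edge)\cdot\nn_\ome,\phi\rangle_{\partial\ome}$, where $\div\jj=\div\jj_h^\edge=0$ by the hypotheses on $\jj$ and by \eqref{eq_definition_jj_h}, while the boundary pairing splits over $\partial\ome=\overline{\GeD}\cup\overline{\GeN}$ and vanishes because $\phi=0$ on $\GeD$ and because $(\jj-\jj_h^\edge)\cdot\nn_\ome=0$ on $\GeN$ (indeed $\jj\cdot\nn=0$ on $\GN\supseteq\GeN$ and $\jj_h^\edge\in\HH_{\GeN}(\ddiv,\ome)$). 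For the second term, Cauchy--Schwarz together with the bound on $\|\tvv\|_\ome$ and the definition \eqref{eq_definition_osc} give $(\jj - \jj_h^\edge,\tvv)_\ome\le\|\jj - \jj_h^\edge\|_\ome\,\CPVe h_\ome = \osc_\edge$. Combining, $|(\jj - \jj_h^\edge,\vv)_\ome|\le\osc_\edge$, as required.

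The main obstacle is the careful book-keeping of boundary conditions in the Helmholtz decomposition: the space for $\phi$ must be chosen so that simultaneously $\tvv$ inherits the tangential boundary condition making it admissible in \eqref{eq_local_poincare_vectorial} and the gradient contribution $(\jj-\jj_h^\edge,\grad\phi)_\ome$ genuinely drops out, which forces $\phi$ to vanish on $\GeD$ and obliges one to treat the interior-edge case ($\GeN=\emptyset$, $\phi\in H^1_0(\ome)$) and the Dirichlet boundary-edge case separately. Everything else reduces to one integration by parts, a Cauchy--Schwarz estimate, and the functional inequality \eqref{eq_local_poincare_vectorial}, whose patch-uniform constant is established in Appendix~\ref{appendix_weber}.
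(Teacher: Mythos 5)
Your proof is correct and follows essentially the same route as the paper's: the same Helmholtz-type correction $\tvv=\vv-\grad\phi$ with $\phi$ vanishing on $\GeD$, the same orthogonality $(\jj-\jj_h^\edge,\grad\phi)_\ome=0$ (which the paper obtains directly from the weak definition of $\HH_{\GeN}(\ddiv,\ome)$ rather than by an explicit integration by parts), and the same application of~\eqref{eq_local_poincare_vectorial} followed by Cauchy--Schwarz. The only cosmetic difference is that you split the interior and Dirichlet-boundary cases for the choice of the space of $\phi$; the single space $H^1_{\GeD}(\ome)$ covers all edge types at once, including the Neumann/mixed boundary edges you do not mention explicitly.
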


\begin{proof}
Let $\vv \in \HH_{\GeD}(\ccurl,\ome)$ with $\|\curl \vv\|_{\ome} = 1$ be fixed.
We have
\begin{equation*}
\langle \RR_h^\edge,\vv \rangle
=
\langle \RR,\vv \rangle - (\jj-\jj_h^\edge,\vv)_{\ome}.
\end{equation*}
We define $q$ as the unique element of $H^1_{\GeD}(\ome)$ such that
\begin{equation*}
(\grad q,\grad w) = (\vv,\grad w) \quad \forall w \in H^1_{\GeD}(\ome),
\end{equation*}
and set $\widetilde \vv \eq \vv - \grad q$.
Since $\div \jj = \div \jj_h^\edge = 0$ and $\jj-\jj_h^\edge \in \HH_{\GeN}(\ddiv,\ome)$,
we have $(\jj-\jj_h^\edge,\grad q)_{\ome} = 0$, and it follows that
\begin{equation*}
\langle \RR_h^\edge,\vv \rangle
=
\langle \RR,\vv \rangle - (\jj-\jj_h^\edge,\widetilde \vv)_{\ome}
\leq
\|\RR\|_{\star,\edge} + \|\jj - \jj_h^\edge\|_{\ome}\|\widetilde \vv\|_{\ome}.
\end{equation*}
Since $\tvv \in \HH_{\GeD}(\ccurl,\ome) \cap
\HH_{\GeN}(\ddiv,\ome)$
with $\div \tvv = 0$ in $\ome$, recalling~\eqref{eq_local_poincare_vectorial}, we have
\begin{equation*}
\|\widetilde \vv\|_{\ome}
\leq
\CPVe h_\ome \|\curl \widetilde \vv\|_{\ome}
=
\CPVe h_\ome \|\curl \vv\|_{\ome}
=
\CPVe h_\ome,
\end{equation*}
and we obtain~\eqref{eq_data_oscillations_lower_bound} by taking the supremum over all $\vv$.
The proof of~\eqref{eq_data_oscillations_upper_bound} follows exactly the same path.
\end{proof}

\subsection{Partition of unity and cut-off estimates}

We now analyze a partition of unity for vector-valued functions that we later employ to localize
the error onto edge patches. Recalling the notation $\ttau_{\edge}$ for the
unit tangent vector orienting $\edge$, we quote the following classical partition
of unity~\cite[Chapter~15]{Ern_Guermond_FEs_I_21}:

\begin{lemma}[Vectorial partition of unity] \label{lem_PU}
Let $\mathbb I$ be the identity matrix in ${\mathbb R}^{3\times 3}$.
The edge basis functions $\ppsi_\edge$ from~\eqref{eq_BF} satisfy
\[
\sum_{\edge \in \EE_h}\ttau_\edge \otimes \ppsi_\edge =
\sum_{\edge \in \EE_h}\ppsi_\edge \otimes \ttau_\edge = {\mathbb I},
\]
where $\otimes$ denotes the outer product, so that we have
\begin{equation}
\label{eq_partition_unity}
\ww = \sum_{\edge \in \EE_h} (\ww \cdot \ttau_\edge )|_\ome \ppsi_\edge
\qquad \forall \ww \in \LL^2(\Omega).
\end{equation}
\end{lemma}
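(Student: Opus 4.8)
The plan is to reduce the global matrix identity to a purely element-wise computation. Since $\supp \ppsi_\edge = \overline{\ome}$, on any fixed tetrahedron $K \in \TT_h$ the only nonzero functions among $\{\ppsi_\edge\}_{\edge \in \EE_h}$ are the six with $\edge \in \EE_K$, so it suffices to prove $\sum_{\edge \in \EE_K} \ttau_\edge \otimes \ppsi_\edge = \mathbb I$ pointwise on $K$; summing these local identities over all $K$ then yields the global one (the second identity $\sum_\edge \ppsi_\edge \otimes \ttau_\edge = \mathbb I$ follows by transposition, using $\mathbb I^{\mathsf T} = \mathbb I$), and finally \eqref{eq_partition_unity} is obtained by applying the matrix identity pointwise to $\ww \in \LL^2(\Omega)$ together with $(\ppsi_\edge\otimes\ttau_\edge)\ww = (\ww\cdot\ttau_\edge)\ppsi_\edge$.

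For the local statement I would use the explicit lowest-order N\'ed\'elec (Whitney) representation: on $K$ with vertices $\aaa_0,\dots,\aaa_3$ and barycentric coordinates $\lambda_0,\dots,\lambda_3$, if $\edge$ joins $\aaa_i$ and $\aaa_j$ with labels chosen so that $\ttau_\edge = (\aaa_j-\aaa_i)/|\edge|$, then $\ppsi_\edge|_K = |\edge|\,(\lambda_i \grad\lambda_j - \lambda_j \grad\lambda_i)$; this matches the normalization \eqref{eq_BF}, and the product $\ttau_\edge \otimes \ppsi_\edge = (\aaa_j-\aaa_i)\otimes(\lambda_i\grad\lambda_j - \lambda_j\grad\lambda_i)$ is manifestly invariant under flipping the orientation of $\edge$, consistent with the irrelevance of the orientation noted earlier.

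The heart of the argument is then a short algebraic identity. I would invoke the three elementary facts $\sum_i \lambda_i \equiv 1$, $\sum_i \grad\lambda_i = \bzero$, and $\sum_i \aaa_i \otimes \grad\lambda_i = \mathbb I$, the last obtained by differentiating the affine identity $\xx = \sum_i \lambda_i(\xx)\,\aaa_i$. Writing the sum over the six edges of $K$ as half the sum over ordered pairs (legitimate since each summand is symmetric under $i \leftrightarrow j$ and the diagonal terms vanish),
\[
\sum_{\edge \in \EE_K} \ttau_\edge \otimes \ppsi_\edge\big|_K
= \tfrac12 \sum_{i \neq j} (\aaa_j-\aaa_i)\otimes(\lambda_i\grad\lambda_j - \lambda_j\grad\lambda_i),
\]
and expanding into four terms, two of them vanish because they contain $\sum_i \grad\lambda_i$, while the other two each collapse to $\sum_i \aaa_i\otimes\grad\lambda_i = \mathbb I$ after using $\sum_i \lambda_i = 1$, giving $\tfrac12(\mathbb I + \mathbb I) = \mathbb I$.

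The only genuinely delicate point is the bookkeeping: matching the fixed orientation $\ttau_\edge$ to the vertex ordering in the Whitney form so that no stray sign survives, and checking that edges of $\TT_h$ not belonging to $K$ contribute nothing on $K$. Everything else is the elementary computation above; the result is classical and could alternatively be quoted directly from \cite[Chapter~15]{Ern_Guermond_FEs_I_21}.
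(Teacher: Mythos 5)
Your proof is correct: the reduction to a single tetrahedron via $\supp \ppsi_\edge = \overline{\ome}$, the Whitney representation $\ppsi_\edge|_K = |\edge|(\lambda_i\grad\lambda_j-\lambda_j\grad\lambda_i)$ consistent with the normalization~\eqref{eq_BF}, and the expansion using $\sum_i\lambda_i=1$, $\sum_i\grad\lambda_i=\bzero$, $\sum_i\aaa_i\otimes\grad\lambda_i=\mathbb I$ all check out, as does the transposition step and the orientation-invariance of $\ttau_\edge\otimes\ppsi_\edge$. The paper does not prove this lemma at all --- it quotes it as classical from \cite[Chapter~15]{Ern_Guermond_FEs_I_21} --- and your computation is precisely the standard argument behind that citation, so nothing further is needed.
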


\begin{lemma}[Cut-off stability] \label{lem_c_stab}
For every edge $\edge \in \EE_h$, recalling the space $H^1_\star(\ome)$ defined in~\eqref{eq_Hse} and the constant $\Cconte$ defined in~\eqref{eq_definition_Ccont}, we have
\begin{equation}
\label{eq_estimate_cont}
\|\curl(v \ppsi_\edge)\|_{\ome}
\leq
\Cconte \|\grad v\|_\ome \qquad \forall v \in H^1_\star(\ome).
\end{equation}
Moreover, the upper bound~\eqref{eq_bound_Ccont} on $\Cconte$ holds true.
\end{lemma}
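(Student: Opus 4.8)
The plan is to combine the Leibniz rule for the curl with Poincar\'e's inequality~\eqref{eq_local_poincare}, and then to make the constant explicit by exploiting the Whitney representation of $\ppsi_\edge$.

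First, since $\ppsi_\edge \in \NN_0(\TT_h) \cap \HH(\ccurl,\Omega)$ has continuous tangential trace across interior faces and $v \in H^1_\star(\ome) \subset H^1(\ome)$, the product $v\ppsi_\edge$ lies in $\HH(\ccurl,\ome)$ and the Leibniz rule
\[
\curl(v\ppsi_\edge) = \grad v \times \ppsi_\edge + v\,\curl\ppsi_\edge
\]
holds in $\LL^2(\ome)$ with no interface contribution. Taking the $\LL^2(\ome)$-norm and using the triangle and H\"older inequalities gives
\[
\|\curl(v\ppsi_\edge)\|_\ome \leq \|\ppsi_\edge\|_{\infty,\ome}\,\|\grad v\|_\ome + \|\curl\ppsi_\edge\|_{\infty,\ome}\,\|v\|_\ome .
\]
Since $v \in H^1_\star(\ome)$, Poincar\'e's inequality~\eqref{eq_local_poincare} bounds $\|v\|_\ome \leq \CPe h_\ome \|\grad v\|_\ome$, and substituting produces exactly $\|\curl(v\ppsi_\edge)\|_\ome \leq \Cconte \|\grad v\|_\ome$ with $\Cconte$ as defined in~\eqref{eq_definition_Ccont}. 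This already proves~\eqref{eq_estimate_cont}.

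For the explicit bound~\eqref{eq_bound_Ccont} I would recall that on each $K \in \TTe$ the function $\ppsi_\edge$ equals $|\edge|$ times the Whitney edge function, i.e. $\ppsi_\edge|_K = |\edge|\,(\lambda_i\grad\lambda_j - \lambda_j\grad\lambda_i)$, where $\lambda_i,\lambda_j$ are the barycentric coordinates of $K$ associated with the two endpoints of $\edge$; hence $\curl\ppsi_\edge|_K = 2|\edge|\,\grad\lambda_i\times\grad\lambda_j$. The geometric ingredient is the bound $|\grad\lambda_m| \leq 1/\rho_K$ for every vertex $m$ of $K$: writing $h_m = 1/|\grad\lambda_m|$ for the height of $K$ over the opposite face $F_m$, and combining $3|K| = h_m|F_m| = \tfrac{\rho_K}{2}\sum_{F \subset \partial K}|F|$ (the inscribed ball has radius $\rho_K/2$) with the polyhedral ``triangle inequality'' $|F_m| \leq \sum_{F \neq F_m}|F|$ (itself a consequence of $\sum_F |F|\,\nn_F = \bzero$), one obtains $h_m \geq \rho_K$. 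Using $0 \leq \lambda_i,\lambda_j \leq 1$ then yields
\[
\|\ppsi_\edge\|_{\infty,\ome} \leq |\edge|\big(|\grad\lambda_i| + |\grad\lambda_j|\big) \leq \frac{2|\edge|}{\rho_\edge},
\qquad
\|\curl\ppsi_\edge\|_{\infty,\ome} \leq 2|\edge|\,|\grad\lambda_i|\,|\grad\lambda_j| \leq \frac{2|\edge|}{\rho_\edge^2},
\]
since $\rho_\edge = \min_{K \in \TTe}\rho_K$. Plugging these into the definition of $\Cconte$ gives $\Cconte \leq \frac{2|\edge|}{\rho_\edge} + \CPe h_\ome \frac{2|\edge|}{\rho_\edge^2} = \frac{2|\edge|}{\rho_\edge}\big(1 + \CPe\, h_\ome/\rho_\edge\big) = \Cke$, using $\kappa_\edge = h_\ome/\rho_\edge$.

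The argument is essentially routine, so the only point deserving care is the \emph{sharp} geometric bound $|\grad\lambda_m| \leq 1/\rho_K$: the cruder estimate $|\grad\lambda_m| \leq 2/\rho_K$ (which follows trivially from $h_m = \tfrac{\rho_K}{2}\sum_F|F|/|F_m| \geq \rho_K/2$) would degrade the constant in~\eqref{eq_bound_Ccont} by a factor of two, and removing this loss is precisely what the polyhedral face-area inequality buys. A secondary, but entirely standard, point is the validity of the Leibniz rule across mesh interfaces, which is immediate from the $\HH(\ccurl,\Omega)$-conformity of $\ppsi_\edge$ and the continuity of $v$.
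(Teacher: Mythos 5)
Your proof is correct and follows essentially the same route as the paper: Leibniz rule plus the Poincar\'e inequality~\eqref{eq_local_poincare} for~\eqref{eq_estimate_cont}, then the Whitney representation of $\ppsi_\edge$ with $\|\grad\lambda_m\|_{\infty,K}\le\rho_K^{-1}$ for~\eqref{eq_bound_Ccont}. The only difference is that you additionally justify the barycentric gradient bound (via the inradius and face-area identities), which the paper simply asserts.
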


\begin{proof}
Let an edge $\edge \in \EE_h$ and $v \in H^1_\star(\ome)$ be fixed.
Since $\curl (v \ppsi_\edge) = v \curl \ppsi_\edge + \grad v \times \ppsi_\edge$,
we have, using~\eqref{eq_local_poincare},
\begin{align*}
\|\curl (v \ppsi_\edge)\|_{\ome}
&\leq
\|\curl \ppsi_\edge\|_{\infty,\ome}\|v\|_\ome +
\|\ppsi_\edge\|_{\infty,\ome}\|\grad v\|_\ome
\\
&\leq
(\|\curl \ppsi_\edge\|_{\infty,\ome}\CPe h_\ome +
\|\ppsi_\edge\|_{\infty,\ome})\|\grad v\|_\ome
\\
&=
\Cconte \|\grad v\|_\ome.
\end{align*}
This proves~\eqref{eq_estimate_cont}. To prove~\eqref{eq_bound_Ccont},
we remark that in every tetrahedron $K \in \TTe$, we have
(see for instance~\cite[Section~5.5.1]{Monk_FEs_Maxwell_03}, \cite[Chapter~15]{Ern_Guermond_FEs_I_21})
\begin{equation*}
\ppsi_\edge|_K = |\edge| (\lambda_1 \grad \lambda_2 - \lambda_2 \grad \lambda_1),
\quad
(\curl \ppsi_\edge)|_K = 2 |\edge|\grad \lambda_1 \times \grad \lambda_2,
\end{equation*}
where $\lambda_1$ and $\lambda_2$ are the barycentric coordinates of $K$
associated with the two endpoints of $\edge$ such that $\ttau_\edge$
points from the first to the second vertex.
Since $\|\lambda_j\|_{\infty,K} = 1$ and $\|\grad \lambda_j\|_{\infty,K} \leq \rho_K^{-1}$,
we have
\begin{equation*}
\|\ppsi_\edge\|_{\infty,K} \leq \frac{2}{\rho_K}|\edge|, \quad
\|\curl \ppsi_\edge\|_{\infty,K} \leq \frac{2}{\rho_K^2}|\edge|
\end{equation*}
for every $K \in \TTe$.
Recalling the definition~\eqref{eq_patch_not} of $\rho_\edge$,
which implies that $\rho_\edge \leq \rho_K$, as well as the definition $\kappa_\edge$
in~\eqref{eq_regularities}, we conclude that
\begin{align*}
\Cconte = \|\ppsi_\edge\|_{\infty,\ome} + \CPe h_\ome \|\curl \ppsi_\edge\|_{\infty,\ome}
\leq \frac{2 |\edge|}{\rho_\edge}\left(1 + \CPe \frac{h_\ome}{\rho_\edge} \right) = \Cke.
\end{align*}
\end{proof}

\subsection{Upper bound using localized residual dual norms}

We now establish an upper bound on the error using the localized residual dual norms
$\|\RR_h^\edge\|_{\star,\edge}$, in the spirit of~\cite{Blech_Mal_Voh_loc_res_NL_20},
\cite[Chapter~34]{Ern_Guermond_FEs_II_21}, and the references therein.

\begin{proposition}[Upper bound by localized residual dual norms]
Let $\Cconte$ and $\Clift$ be defined in~\eqref{eq_definition_Ccont}
and~\eqref{eq_estimate_lift}, respectively. Then the following holds:
\begin{equation}
\label{eq_upper_bound_residual}
\|\curl(\ee - \ee_h)\|_{\Omega}
\leq
\sqrt{6} \Clift \left (
\sum_{\edge \in \EE_h} \Cconte^2 \left (\|\RR_h^\edge\|_{\star,\edge} + \osc_\edge\right )^2
\right )^{1/2}.
\end{equation}
\end{proposition}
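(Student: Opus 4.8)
The plan is to chain the error--residual identity~\eqref{eq_res}, a regularized lifting, and the vectorial partition of unity of Lemma~\ref{lem_PU}. First I would fix $\vv \in \HH_{\GD}(\ccurl,\Omega)$ with $\|\curl \vv\|_\Omega = 1$, and use~\eqref{eq_estimate_lift} to produce $\ww \in \HH^1(\Omega) \cap \HH_{\GD}(\ccurl,\Omega)$ with $\curl \ww = \curl \vv$ and $\|\grad \ww\|_\Omega \le \Clift$. Because the residual depends on its argument only through the curl --- indeed $\langle \RR,\pphi\rangle = (\curl(\ee-\ee_h),\curl\pphi)_\Omega$ for every $\pphi \in \HH_{\GD}(\ccurl,\Omega)$ --- one has $\langle \RR,\vv\rangle = \langle \RR,\ww\rangle$. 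Then I would apply~\eqref{eq_partition_unity} to write $\ww = \sum_{\edge \in \EE_h} w_\edge \ppsi_\edge$ with $w_\edge \eq (\ww\cdot\ttau_\edge)|_\ome \in H^1(\ome)$, so that $\langle \RR,\ww\rangle = \sum_{\edge \in \EE_h}\langle \RR, w_\edge\ppsi_\edge\rangle$ and it remains to bound each summand.

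The heart of the proof is to turn $w_\edge\ppsi_\edge$ into an admissible argument for both $\RR$ on $\Omega$ and the local dual norm $\|\RR\|_{\star,\edge}$. Two facts are needed. First, the tangential trace of $\ppsi_\edge$ vanishes on every face of $\partial\ome$ that does not contain $\edge$, and on a boundary face of $\partial\ome$ containing $\edge$ and lying in $\overline{\GD}$ the scalar $w_\edge$ vanishes, since $\ww\times\nn = \bzero$ there and $\ttau_\edge$ is tangent to that face; consequently $w_\edge\ppsi_\edge$, extended by zero, belongs to $\HH_{\GD}(\ccurl,\Omega)$ and, when $\edge\in\EED_h$, one already has $w_\edge \in H^1_\star(\ome)$. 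Second, when $\edge\notin\EED_h$ the space $H^1_\star(\ome)$ in~\eqref{eq_Hse} requires zero mean, so I would set $v_\edge \eq w_\edge - |\ome|^{-1}\int_\ome w_\edge$; since $\ppsi_\edge \in \NN_0(\TT_h)\cap\HH_{\GD}(\ccurl,\Omega) \subset \VVh$, the Galerkin orthogonality~\eqref{eq_maxwell_discrete_II} gives $\langle \RR,\ppsi_\edge\rangle = 0$, whence $\langle \RR, w_\edge\ppsi_\edge\rangle = \langle \RR, v_\edge\ppsi_\edge\rangle$ (for $\edge\in\EED_h$ take $v_\edge \eq w_\edge$). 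In every case $v_\edge\ppsi_\edge \in \HH_{\GeD}(\ccurl,\ome)$, $v_\edge\in H^1_\star(\ome)$, and $\grad v_\edge = \grad w_\edge$, so the definition~\eqref{eq_RR_ome} of $\|\RR\|_{\star,\edge}$, the cut-off stability~\eqref{eq_estimate_cont}, the pointwise bound $|\grad(\ww\cdot\ttau_\edge)| \le |\grad\ww|$, and the data-oscillation estimate~\eqref{eq_data_oscillations_upper_bound} combine into
\[
\langle \RR, w_\edge\ppsi_\edge\rangle \le \|\RR\|_{\star,\edge}\,\|\curl(v_\edge\ppsi_\edge)\|_\ome \le \Cconte\big(\|\RR_h^\edge\|_{\star,\edge} + \osc_\edge\big)\,\|\grad\ww\|_\ome .
\]

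To finish, I would sum over $\edge\in\EE_h$ and apply Cauchy--Schwarz, bounding $\langle \RR,\vv\rangle$ by the product of $\big(\sum_{\edge\in\EE_h}\Cconte^2(\|\RR_h^\edge\|_{\star,\edge}+\osc_\edge)^2\big)^{1/2}$ and $\big(\sum_{\edge\in\EE_h}\|\grad\ww\|_\ome^2\big)^{1/2}$. Each tetrahedron has exactly six edges, so almost every point of $\Omega$ lies in exactly six edge patches, giving $\sum_{\edge\in\EE_h}\|\grad\ww\|_\ome^2 = 6\|\grad\ww\|_\Omega^2 \le 6\Clift^2$; taking the supremum over $\vv$ and invoking~\eqref{eq_res} yields~\eqref{eq_upper_bound_residual}. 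The step I expect to be the main obstacle is precisely the one in the second paragraph: verifying the tangential-trace and mean-value conditions that simultaneously make $w_\edge\ppsi_\edge$ (or its zero-mean modification) a legitimate test function for $\RR$ on the whole domain and an element of $H^1_\star(\ome)$ on which the cut-off estimate applies, and correctly invoking Galerkin orthogonality for interior and Neumann edges. The other ingredients --- the lifting, the partition of unity, the cut-off bound, and the finite-overlap count --- are routine.
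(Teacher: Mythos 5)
Your proposal is correct and follows essentially the same route as the paper's proof: the lifting via~\eqref{eq_estimate_lift}, the partition of unity of Lemma~\ref{lem_PU}, Galerkin orthogonality against $\NN_0(\TT_h)\cap\HH_{\GD}(\ccurl,\Omega)$ to subtract the mean $\overline{w_\edge}$ for non-Dirichlet edges, the cut-off stability~\eqref{eq_estimate_cont}, the overlap count of six, and the oscillation bound~\eqref{eq_data_oscillations_upper_bound}. The only cosmetic difference is that you apply the oscillation estimate edge by edge rather than once at the end, which is immaterial.
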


\begin{proof}
We start with~\eqref{eq_res}. Let $\vv \in \HH_{\GD}(\ccurl,\Omega)$
with $\|\curl \vv\|_{\Omega} = 1$ be fixed. Following~\eqref{eq_estimate_lift},
we define $\ww \in \HH^1(\Omega) \cap \HH_{\GD}(\ccurl,\Omega)$
such that $\curl \ww = \curl \vv$ with
\begin{equation}
\label{tmp_estimate_lift}
\|\grad \ww\|_{\Omega} \leq \Clift \|\curl \vv\|_{\Omega}.
\end{equation}
As a consequence of~\eqref{eq_maxwell_weak_II} and~\eqref{eq_maxwell_discrete_II},
the residual $\RR$ is (in particular) orthogonal to
$\NN_0(\TT_h) \cap \HH_{\GD}(\ccurl,\Omega)$. Thus,
by employing the partition of unity~\eqref{eq_partition_unity}, we have
\begin{equation*}
\langle \RR,\vv \rangle
=
\langle \RR,\ww \rangle
=
\sum_{\edge \in \EE_h} \langle \RR, (\ww \cdot \ttau_\edge )|_\ome \ppsi_\edge \rangle
=
\sum_{\edge \in \EE_h}
\langle \RR, (\ww \cdot \ttau_\edge - \overline{w_\edge})|_\ome \ppsi_\edge \rangle,
\end{equation*}
where $\overline{w_\edge} \eq 0$ if $\edge \in \EED_h$ and
\begin{equation*}
\overline{w_\edge} \eq \frac{1}{|\ome|} \int_\ome \ww \cdot \ttau_\edge
\end{equation*}
otherwise. Since
$(\ww \cdot \ttau_\edge - \overline{w_\edge}) \ppsi_\edge \in \HH_{\GeD}(\ccurl,\ome)$
for all $\edge\in \EE_h$, we have from~\eqref{eq_RR_ome}
\begin{equation*}
\langle \RR,\vv \rangle
\leq
\sum_{\edge \in \EE_h} \|\RR\|_{\star,\edge} \|\curl \left (
\left (\ww \cdot \ttau_\edge - \overline{w_\edge}
\right ) \ppsi_\edge \right )\|_{\ome}.
\end{equation*}
We observe that $\ww \cdot \ttau_\edge - \overline{w_\edge} \in H^1_\star(\ome)$
for all $\edge \in \EE_h$ and that
\begin{equation*}
\|\grad(\ww \cdot \ttau_\edge - \overline{w_\edge})\|_{\ome}
=
\|\grad (\ww \cdot \ttau_\edge)\|_{\ome}
\leq
\|\grad\ww\|_{\ome}.
\end{equation*}
As a result, \eqref{eq_estimate_cont} shows that
\begin{equation*}
\langle \RR,\vv \rangle
\leq
\sum_{\edge \in \EE_h} \Cconte \|\RR\|_{\star,\edge}\|\grad \ww\|_{\ome}
\leq
\left (\sum_{\edge \in \EE_h} \Cconte^2 \|\RR\|_{\star,\edge}^2\right )^{1/2}
\left (\sum_{\edge \in \EE_h} \|\grad \ww\|_{\ome}^2\right )^{1/2}.
\end{equation*}
At this point, as each tetrahedron $K \in \TT_h$ has $6$ edges, we have
\begin{equation*}
\sum_{\edge \in \EE_h} \|\grad \ww\|_{\ome}^2
\revision{=}
6 \|\grad \ww\|_{\Omega}^2,
\end{equation*}
and using~\eqref{tmp_estimate_lift}, we infer that
\begin{equation*}
\langle \RR,\vv \rangle\leq
\sqrt{6} \Clift
\left (\sum_{\edge \in \EE_h} \Cconte^2\|\RR\|_{\star,\edge}^2\right )^{1/2}
\|\curl \vv\|_{\Omega}.
\end{equation*}
Then, we conclude with~\eqref{eq_data_oscillations_upper_bound}.
\end{proof}

\subsection{Lower bound using localized residual dual norms}

We now consider the derivation of local lower bounds on the error using
the residual dual norms. We first establish a result for the residual $\RR$.

\begin{lemma}[Local residual]
For every edge $\edge \in \EE_h$, the following holds:
\begin{equation}
\label{eq_minimization}
\|\RR\|_{\star,\edge} =
\min_{\substack{
\hh \in \HH_{\GeN}(\ccurl,\ome)\\
\curl \hh = \jj}}
\|\hh - \curl \ee_h\|_{\ome},
\end{equation}
as well as
\begin{equation}
\label{eq_lower_bound_residual}
\|\RR\|_{\star,\edge} \leq \|\curl(\ee - \ee_h)\|_{\ome}.
\end{equation}
\end{lemma}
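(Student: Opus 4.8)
Our plan is to read \eqref{eq_minimization} as a local Prager--Synge identity, with \eqref{eq_lower_bound_residual} as an immediate corollary. Throughout we use two standard facts about spaces with partial vanishing traces. First, every $\vv \in \HH_{\GeD}(\ccurl,\ome)$ extends by zero to a field in $\HH_{\GD}(\ccurl,\Omega)$ (the part of $\partial\ome$ interior to $\Omega$ lies in $\GeD$, and $\GeN \subset \GN$); hence $\langle \RR,\vv\rangle = (\jj,\vv)_\ome - (\curl\ee_h,\curl\vv)_\ome$, and testing \eqref{eq_maxwell_weak_II} with the extended field gives $\langle\RR,\vv\rangle = (\curl(\ee-\ee_h),\curl\vv)_\ome$. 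Second, the Green identity $(\curl\hh,\vv)_\ome = (\hh,\curl\vv)_\ome$ holds for all $\hh \in \HH_{\GeN}(\ccurl,\ome)$ and $\vv \in \HH_{\GeD}(\ccurl,\ome)$, since the relevant tangential traces are supported on the complementary pieces $\GeN$ and $\GeD$ of $\partial\ome$ (see \cite{Fer_Gil_Maxw_BC_97}). Finally, $\curl\ee|_\ome$ belongs to the admissible set $\mathcal A_\edge \eq \{\hh\in\HH_{\GeN}(\ccurl,\ome) \; | \; \curl\hh=\jj\}$, because $\curl\curl\ee=\jj$ and $(\curl\ee)\times\nn=\bzero$ on $\GN\supset\GeN$.

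The inequality ``$\leq$'' in \eqref{eq_minimization} is easy: for $\hh\in\mathcal A_\edge$ and $\vv\in\HH_{\GeD}(\ccurl,\ome)$ with $\|\curl\vv\|_\ome=1$, the Green identity yields $\langle\RR,\vv\rangle = (\curl\hh,\vv)_\ome - (\curl\ee_h,\curl\vv)_\ome = (\hh-\curl\ee_h,\curl\vv)_\ome \leq \|\hh-\curl\ee_h\|_\ome$, so taking the supremum over $\vv$ and the infimum over $\hh$ gives $\|\RR\|_{\star,\edge}\leq\min_{\hh\in\mathcal A_\edge}\|\hh-\curl\ee_h\|_\ome$. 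Specializing to $\hh=\curl\ee|_\ome$ already proves \eqref{eq_lower_bound_residual}; alternatively \eqref{eq_lower_bound_residual} follows directly from $\langle\RR,\vv\rangle=(\curl(\ee-\ee_h),\curl\vv)_\ome$ and the Cauchy--Schwarz inequality.

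For the reverse inequality ``$\geq$'' we use convex duality. The set $\mathcal A_\edge$ is a nonempty closed affine subspace of $\LL^2(\ome)$, so the minimizer $\hh^\star$ in \eqref{eq_minimization} exists (it is the $\LL^2$-projection of $\curl\ee_h$ onto $\mathcal A_\edge$) and satisfies the Euler--Lagrange condition $(\hh^\star-\curl\ee_h,\ww)_\ome=0$ for all $\ww$ in the direction space $\mathcal N_\edge \eq \{\ww\in\HH_{\GeN}(\ccurl,\ome) \; | \; \curl\ww=\bzero\}$. Put $\rr_\edge\eq\hh^\star-\curl\ee_h$. The key step is the $\LL^2(\ome)$-orthogonal Helmholtz-type decomposition $\LL^2(\ome)=\curl\bigl(\HH_{\GeD}(\ccurl,\ome)\bigr)\oplus\mathcal N_\edge$: orthogonality is again the Green identity, while closedness of $\curl\bigl(\HH_{\GeD}(\ccurl,\ome)\bigr)$ follows by splitting any $\vv\in\HH_{\GeD}(\ccurl,\ome)$ as $\vv=\grad\phi+\tvv$ with $\phi\in H^1_{\GeD}(\ome)$ and $\tvv\in\HH_{\GeD}(\ccurl,\ome)\cap\HH_{\GeN}(\ddiv,\ome)$, $\div\tvv=0$, $\curl\tvv=\curl\vv$, so that \eqref{eq_local_poincare_vectorial} gives $\|\tvv\|_\ome\leq\CPVe h_\ome\|\curl\vv\|_\ome$ and hence a bounded right inverse of $\curl$ on its range. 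Since $\rr_\edge\perp\mathcal N_\edge$, the decomposition furnishes $\vv^\star\in\HH_{\GeD}(\ccurl,\ome)$ with $\curl\vv^\star=\rr_\edge$; then, using the Green identity once more, $\langle\RR,\vv^\star\rangle = (\hh^\star-\curl\ee_h,\curl\vv^\star)_\ome = \|\rr_\edge\|_\ome^2$, while $\|\curl\vv^\star\|_\ome=\|\rr_\edge\|_\ome$. Hence (the case $\rr_\edge=\bzero$ being trivial) testing \eqref{eq_RR_ome} with $\vv^\star/\|\curl\vv^\star\|_\ome$ gives $\|\RR\|_{\star,\edge}\geq\|\rr_\edge\|_\ome=\min_{\hh\in\mathcal A_\edge}\|\hh-\curl\ee_h\|_\ome$, which finishes the proof.

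The main obstacle is the Helmholtz-type decomposition under mixed boundary conditions: one must establish both the $\LL^2(\ome)$-orthogonality between $\curl\bigl(\HH_{\GeD}(\ccurl,\ome)\bigr)$ and the curl-free fields of $\HH_{\GeN}(\ccurl,\ome)$, and the closedness of the range of $\curl$. It is exactly here that the topology of the edge patch (simple connectedness of $\ome$ and connectedness of $\GeD$) and the Poincar\'e--Friedrichs--Weber inequality \eqref{eq_local_poincare_vectorial} of Appendix~\ref{appendix_weber} come into play.
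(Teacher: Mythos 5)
Your proof is correct, and its skeleton coincides with the paper's: the inequality ``$\leq$'' comes from the Green identity between $\HH_{\GeN}(\ccurl,\ome)$ and $\HH_{\GeD}(\ccurl,\ome)$, the bound \eqref{eq_lower_bound_residual} from the admissibility of $(\curl\ee)|_\ome$, and the inequality ``$\geq$'' from exhibiting an optimal test field whose curl equals $\hh^\star-\curl\ee_h$. Where you differ is in how the two existence facts are supplied. The paper constructs the minimizer $\hh^\star$ as the solution of an explicit div--curl boundary-value problem (well posed by \cite[Proposition~7.4]{Fer_Gil_Maxw_BC_97}), reads off from that system that $\pphi\eq\hh^\star-\curl\ee_h$ is divergence-free with vanishing normal trace on $\GeD$, and then invokes the vector-potential theorem \cite[Theorem~8.1]{Fer_Gil_Maxw_BC_97} to write $\pphi=\curl\oome$ with $\oome\in\HH_{\GeD}(\ccurl,\ome)$. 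You instead obtain $\hh^\star$ abstractly as the $\LL^2(\ome)$-projection onto a nonempty closed affine set (the Euler--Lagrange condition replacing the explicit boundary-value problem), and you replace the vector-potential theorem by the orthogonal decomposition $\LL^2(\ome)=\curl\bigl(\HH_{\GeD}(\ccurl,\ome)\bigr)\oplus\mathcal N_\edge$, deducing surjectivity from closedness of the range of $\curl$ (via the Poincar\'e--Friedrichs--Weber inequality \eqref{eq_local_poincare_vectorial}) together with the identification of the orthogonal complement. What this buys is a more self-contained functional-analytic argument: the only external inputs are \eqref{eq_local_poincare_vectorial} (proved in Appendix~\ref{appendix_weber}) and the elementary scalar lifting $\vv=\grad\phi+\tvv$, in effect re-proving the special case of Theorem~8.1 that is needed; the price is that the two nontrivial verifications --- that $\bigl(\curl(\HH_{\GeD}(\ccurl,\ome))\bigr)^\perp$ is exactly $\mathcal N_\edge$ in the weak trace sense of Section~\ref{sec_notat}, and that $\mathcal N_\edge=\grad\bigl(H^1_{\GeN}(\ome)\bigr)$ so that the topology of the patch ($\ome$ simply connected, $\GeN$ empty or connected) enters --- are stated rather than carried out in detail. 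These points do go through for edge patches, so there is no gap, only a heavier reliance on the reader to fill in the trace-space bookkeeping that the citation of \cite{Fer_Gil_Maxw_BC_97} handles in the paper.
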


\begin{proof}
Let us define $\hh^\star$ as the unique element of $\LL^2(\ome)$ such that
\begin{equation}
\label{tmp_definition_minimizer}
\left \{
\begin{array}{rcll}
\div \hh^\star &=& 0 & \text{ in } \ome,
\\
\curl \hh^\star &=& \jj & \text{ in } \ome,
\\
\hh^\star \cdot \nn_{\omega_\edge} &=& \curl \ee_h \cdot \nn_{\omega_\edge}  & \text{ on } \GeD,
\\
\hh^\star \times \nn_{\omega_\edge} &=& \boldsymbol 0 & \text{ on } \GeN.
\end{array}
\right .
\end{equation}
The existence and uniqueness of $\hh^\star$ follows from
\cite[Proposition 7.4]{Fer_Gil_Maxw_BC_97} after lifting by $\curl \ee_h$.

The second and fourth equations in~\eqref{tmp_definition_minimizer}
imply that $\hh^\star$ belongs to the minimization set of~\eqref{eq_minimization}.
If $\hh' \in \HH_{\GeN}(\ccurl,\ome)$ with $\curl \hh' = \jj$ is another element of the
minimization set, then $\hh^\star - \hh' = \grad q$ for some
$q \in H^1_{\GeN}(\ome)$, and we see that
\begin{align*}
\|\hh' - \curl \ee_h\|_\ome^2
&=
\|\hh^\star - \curl \ee_h - \grad q\|_\ome^2
\\
&=
\|\hh^\star - \curl \ee_h\|_\ome^2 - 2(\hh^\star - \curl \ee_h,\grad q)_\ome + \|\grad q\|_\ome^2
\\
&=
\|\hh^\star - \curl \ee_h\|_\ome^2 + \|\grad q\|_\ome^2
\\
&\geq
\|\hh^\star - \curl \ee_h\|_\ome^2,
\end{align*}
where we used that $\hh^\star$ is divergence-free,
$(\hh^\star - \curl \ee_h) \cdot \nn_{\omega_\edge} = \boldsymbol 0$
on $\GeD$, and $q=0$ on $\GeN$ to infer that $(\hh^\star - \curl \ee_h,\grad q)_\ome=0$.
Hence, $\hh^\star$ is a minimizer of~\eqref{eq_minimization}.

Let $\vv \in \HH_{\GeD}(\ccurl,\ome)$. Since $(\curl \hh^\star,\vv)_\ome
= (\hh^\star,\curl \vv)_\ome$, we have
\begin{align*}
\langle \RR,\vv \rangle
&=
(\jj,\vv)_\ome - (\curl \ee_h,\curl \vv)_\ome
\\
&=
(\curl \hh^\star,\vv)_\ome - (\curl \ee_h,\curl \vv)_\ome
\\
&=
(\hh^\star,\curl \vv)_\ome - (\curl \ee_h,\curl \vv)_\ome
\\
&=
(\pphi,\curl \vv)_\ome,
\end{align*}
where we have set $\pphi \eq \hh^\star - \curl \ee_h$. As above,
$\div \pphi = 0$ in $\ome$ and $\pphi \cdot \nn_{\omega_\edge} = 0$ on $\GeD$.
Therefore, Theorem 8.1 of~\cite{Fer_Gil_Maxw_BC_97}
shows that $\pphi = \curl \oome$ for some $\oome \in \HH_{\GeD}(\ccurl,\ome)$, and
\begin{equation*}
\langle \RR,\vv\rangle = (\curl \oome,\curl \vv)_\ome \quad \forall \vv \in \HH_{\GeD}(\ccurl,\ome).
\end{equation*}
At this point, it is clear that
\begin{equation*}
\|\RR\|_{\star,\edge} =
\sup_{\substack{
\vv \in \HH_{\GeD}(\ccurl,\ome)\\
\|\curl \vv\|_{\ome} = 1}}
(\curl \oome,\curl \vv)_\ome
=
\|\curl \oome\|_\ome
=
\|\hh^\star - \curl \ee_h\|_\ome.
\end{equation*}
Finally, we obtain~\eqref{eq_lower_bound_residual} by observing that
$\widetilde \hh \eq (\curl \ee )|_\ome$ is in the minimization set of~\eqref{eq_minimization}.
\end{proof}
We are now ready to state our results for the oscillation-free residuals $\RR_h^\edge$.
\begin{lemma}[Local oscillation-free residual] \label{lem_res}
For every edge $\edge \in \EE_h$, the following holds:
\begin{equation}
\label{eq_minimization_jh}
\|\RR_h^\edge\|_{\star,\edge} =
\min_{\substack{
\hh \in \HH_{\GeN}(\ccurl,\ome)\\
\curl \hh = \jj_h^\edge}}
\|\hh - \curl \ee_h\|_{\ome},
\end{equation}
as well as
\begin{equation}
\label{eq_lower_bound_residual_osc}
\|\RR_h^\edge\|_{\star,\edge} \leq \|\curl(\ee - \ee_h)\|_{\ome} + \osc_\edge.
\end{equation}
\end{lemma}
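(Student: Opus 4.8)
The plan is to mirror the proof of the preceding lemma (the one establishing \eqref{eq_minimization} and \eqref{eq_lower_bound_residual} for the residual $\RR$), now with the datum $\jj$ replaced by its local projection $\jj_h^\edge$ from \eqref{eq_definition_jj_h}, and then to obtain the lower bound \eqref{eq_lower_bound_residual_osc} by combining the resulting identity with the estimates \eqref{eq_data_oscillations_lower_bound} and \eqref{eq_lower_bound_residual} already in hand. In fact, since $\RR_h^\edge$ is obtained from $\RR$ precisely by substituting $\jj_h^\edge$ for $\jj$, the argument for \eqref{eq_minimization_jh} is a verbatim transcription; I sketch it for completeness.

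For \eqref{eq_minimization_jh}, I would introduce $\hh^\star \in \LL^2(\ome)$ as the unique solution of the div--curl system $\div \hh^\star = 0$, $\curl \hh^\star = \jj_h^\edge$ in $\ome$, $\hh^\star \cdot \nn_\ome = \curl \ee_h \cdot \nn_\ome$ on $\GeD$, and $\hh^\star \times \nn_\ome = \bzero$ on $\GeN$. Existence and uniqueness follow from \cite[Proposition~7.4]{Fer_Gil_Maxw_BC_97} after lifting by $\curl \ee_h$; the only point requiring a moment's care is that $\jj_h^\edge$ is an admissible datum, which holds because $\jj_h^\edge \in \RT_p(\TTe) \cap \HH_{\GeN}(\ddiv,\ome)$ with $\div \jj_h^\edge = 0$, so that $\jj_h^\edge \in \KK(\ome)^\perp$, $\ome$ being simply connected with $\GeD$ connected. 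The same orthogonal-decomposition argument as before (writing any competitor as $\hh^\star + \grad q$ with $q \in H^1_{\GeN}(\ome)$, and using $\div \hh^\star = 0$, $(\hh^\star - \curl \ee_h)\cdot \nn_\ome = 0$ on $\GeD$, and $q = 0$ on $\GeN$) shows that $\hh^\star$ minimizes $\|\hh - \curl \ee_h\|_\ome$ over the set in \eqref{eq_minimization_jh}. Finally, for $\vv \in \HH_{\GeD}(\ccurl,\ome)$ one rewrites $\langle \RR_h^\edge,\vv\rangle = (\jj_h^\edge,\vv)_\ome - (\curl \ee_h,\curl \vv)_\ome = (\hh^\star - \curl \ee_h,\curl \vv)_\ome = (\curl \oome,\curl \vv)_\ome$, where $\oome \in \HH_{\GeD}(\ccurl,\ome)$ satisfies $\curl \oome = \hh^\star - \curl \ee_h$ by \cite[Theorem~8.1]{Fer_Gil_Maxw_BC_97} (applicable since $\div(\hh^\star - \curl \ee_h) = 0$ and $(\hh^\star - \curl \ee_h)\cdot \nn_\ome = 0$ on $\GeD$); taking the supremum over $\vv$ with $\|\curl \vv\|_\ome = 1$ gives $\|\RR_h^\edge\|_{\star,\edge} = \|\curl \oome\|_\ome = \|\hh^\star - \curl \ee_h\|_\ome$, which is \eqref{eq_minimization_jh}.

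For \eqref{eq_lower_bound_residual_osc}, I would simply chain the data-oscillation estimate \eqref{eq_data_oscillations_lower_bound}, $\|\RR_h^\edge\|_{\star,\edge} \le \|\RR\|_{\star,\edge} + \osc_\edge$, with the lower bound \eqref{eq_lower_bound_residual}, $\|\RR\|_{\star,\edge} \le \|\curl(\ee - \ee_h)\|_\ome$. There is no genuine obstacle here: the bulk of the work is the transcription in the previous paragraph, and the lower bound is an immediate two-line combination of results already proved.
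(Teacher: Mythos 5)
Your proposal is correct and matches the paper's own proof, which likewise establishes \eqref{eq_minimization_jh} by repeating verbatim the argument for \eqref{eq_minimization} with $\jj$ replaced by $\jj_h^\edge$, and obtains \eqref{eq_lower_bound_residual_osc} by chaining \eqref{eq_data_oscillations_lower_bound} with \eqref{eq_lower_bound_residual}. The extra detail you supply on the admissibility of the datum $\jj_h^\edge$ is a welcome (and accurate) elaboration of what the paper leaves implicit.
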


\begin{proof}
We establish~\eqref{eq_minimization_jh} by following the same path as for~\eqref{eq_minimization}.
On the other hand, we simply obtain~\eqref{eq_lower_bound_residual_osc} as a consequence
of~\eqref{eq_data_oscillations_lower_bound} and~\eqref{eq_lower_bound_residual}.
\end{proof}

\subsection{Proof of Theorem~\ref{theorem_aposteriori}}

We are now ready to give a proof of Theorem~\ref{theorem_aposteriori}.

On the one hand, the \revision{broken patchwise} equilibration estimator $\eta_\edge$ defined
in~\eqref{eq_definition_estimator} is evaluated from a field
$\hh_h^{\edge,\star} \in \NN_p(\TTe) \cap \HH_{\GeN}(\ccurl,\ome)$
such that $\curl \hh_h^{\edge,\star} = \jj_h^\edge$, and the sequential
sweep~\eqref{eq_definition_estimator_sweep} produces $\hh_h^{\edge,\heartsuit}$
also satisfying these two properties. Since the minimization set in~\eqref{eq_minimization_jh}
is larger, it is clear that
\begin{equation*}
\|\RR_h^\edge\|_{\star,\edge}
\leq
\eta_\edge
\end{equation*}
for both estimators $\eta_\edge$.
Then, \eqref{eq_upper_bound} immediately follows from~\eqref{eq_upper_bound_residual}.

On the other hand, Theorem~\ref{theorem_stability} with the choice $\ch_h \eq (\curl \ee_h)|_\ome$
and the polynomial degree $p$ together with~\eqref{eq_minimization_jh} of Lemma~\ref{lem_res}
implies that
\begin{equation*}
\eta_\edge \leq \Cste \|\RR_h^\edge\|_{\star,\edge}
\end{equation*}
for the estimator~\eqref{eq_definition_estimator}, whereas \revision{the same result for} $\hh_h^{\edge,\heartsuit}$
from~\eqref{eq_definition_estimator_sweep} \revision{follows from Theorem~\ref{thm_sweep} with again $\ch_h \eq (\curl \ee_h)|_\ome$.}
Therefore, \eqref{eq_lower_bound} is a direct consequence of~\eqref{eq_lower_bound_residual_osc}.

\section{Equivalent reformulation and proof of Theorem~\ref{theorem_stability}
($\HH(\ccurl)$ best-approximation in an edge patch)}
\label{sec_proof_stability}

In this section, we consider the minimization problem over an edge patch as posed
in the statement of Theorem~\ref{theorem_stability}\revision{, as well as its sweep
variant of Theorem~\ref{thm_sweep}}, which were central tools to establish
the efficiency of the \revision{broken patchwise equilibrated} error estimators in
Theorem~\ref{theorem_aposteriori}. These minimization problems are similar to the ones considered
in~\cite{Brae_Pill_Sch_p_rob_09, Ern_Voh_p_rob_15, Ern_Voh_p_rob_3D_20} in the framework of
$H^1$ and $\HH(\ddiv)$ spaces. We prove here Theorem~\ref{theorem_stability} via its equivalence
with a stable broken $\HH(\ccurl)$ polynomial extension on an edge patch, as formulated in
Proposition~\ref{prop_stability_patch} below. \revision{By virtue of Remark~\ref{rem_sweep_brok},
this also establishes the validity of Theorem~\ref{thm_sweep}}.

\subsection{Stability of discrete minimization in a tetrahedron}

\subsubsection{Preliminaries}
\label{sec:preliminaries_K}

We first recall some necessary notation from~\cite{Chaum_Ern_Voh_curl_elm_20}.
Consider an arbitrary mesh face $F\in \FF_h$ oriented
by the fixed unit vector normal $\nn_F$.
For all $\ww \in \LL^2(F)$, we define the tangential component of $\ww$ as
\begin{equation} \label{eq:def_pi_tau_F}
\ppi^\ttau_F (\ww) \eq \ww - (\ww \cdot \nn_F) \nn_F.
\end{equation}
Note that the orientation of $\nn_F$ is not important here.
Let $K\in\TT_h$ and let $\FF_K$ be the collection of the faces of $K$.
For all $\vv \in \HH^1(K)$ and all $F\in\FF_K$, the tangential trace of $\vv$ on $F$
is defined (with a slight abuse of notation) as $\ppi^\ttau_F(\vv) \eq \ppi^\ttau_F (\vv|_F)$.

Consider now a nonempty subset $\FF \subseteq \FF_K$. We denote $\Gamma_\FF \subset \partial K$ the
corresponding part of the boundary of $K$. Let $p\ge0$ be the polynomial degree
and recall that $\NN_p(K)$ is the N\'ed\'elec space on the tetrahedron $K$,
see~\eqref{eq_RT_N}. We define the piecewise polynomial space on $\Gamma_\FF$
\begin{equation}
\label{eq_tr_K}
\NN_p^\ttau(\Gamma_\FF) \eq
\left \{
\ww_\FF \in \LL^2(\Gamma_\FF) \; | \;
\exists \vv_p \in \NN_p(K);
\ww_F \eq (\ww_\FF)|_F = \ppi^\ttau_F (\vv_p) \quad \forall F \in \FF
\right \}.
\end{equation}
Note that $\ww_\FF \in \NN_p^\ttau(\Gamma_\FF)$ if and only if
$\ww_F\in \NN_p^\ttau(\Gamma_{\{F\}})$ for all $F\in\FF$ and whenever $|\FF|\ge2$,
for every pair $(F_-,F_+)$ of distinct faces in $\FF$, the following tangential trace
compatibility condition holds true along their common edge $\edge \eq F_+\cap F_-$:
\begin{equation}
\label{eq_edge_compatibility_condition}
(\ww_{F_+})|_\edge \cdot \ttau_\edge =
(\ww_{F_-})|_\edge \cdot \ttau_\edge.
\end{equation}
For all $\ww_\FF \in \NN_p^\ttau(\Gamma_\FF)$, we set
\begin{equation}
\label{eq_scurl_curl_el}
    \scurl_F (\ww_F) \eq (\curl \vv_p)|_F \cdot \nn_F \qquad \forall F \in \FF,
\end{equation}
which is well-defined independently of the choice of $\vv_p$. Note that the orientation of $\nn_F$
is relevant here.

The definition~\eqref{eq:def_pi_tau_F} of the tangential trace cannot be applied to fields
with the minimal regularity $\vv \in \HH(\ccurl,K)$. In what follows, we use the
following notion to prescribe the tangential trace of a field in $\HH(\ccurl,K)$.

\begin{definition}[Tangential trace by integration by parts in a single tetrahedron]
\label{definition_partial_trace}
Let $K$ be a tetrahedron and
$\FF \subseteq \FF_K$ a nonempty (sub)set of its faces.
Given $\rr_\FF \in \NN_p^\ttau(\Gamma_\FF)$ and $\vv \in \HH(\ccurl,K)$, we
employ the notation ``$\vv|^\ttau_\FF = \rr_\FF$'' to say that
\begin{equation*}
(\curl \vv,\pphi)_K - (\vv,\curl \pphi)_K = \sum_{F \in \FF} (\rr_F,\pphi \times \nn_K)_F
\quad
\forall \pphi \in \HH^1_{\ttau,\FF^{\mathrm{c}}}(K),
\end{equation*}
where
\begin{equation*}
\HH_{\ttau,\FF^{\mathrm{c}}}^1(K) \eq
\left \{
\pphi \in \HH^1(K) \; | \; \pphi|_F \times \nn_{\revision{K}} = \boldsymbol 0
\quad
\forall F \in \FF^{\mathrm{c}} \eq \FF_K \setminus \FF
\right \}.
\end{equation*}
Whenever $\vv \in \HH^1(K)$, $\vv|^\ttau_\FF = \rr_\FF$ if and only if
$\ppi^\ttau_F (\vv) = \rr_F$ for all $F \in \FF$.
\end{definition}

\subsubsection{Statement of the stability result in a tetrahedron}

Recall the Raviart--Thomas space $\RT_p(K)$ on the simplex $K$, see~\eqref{eq_RT_N}.
We are now ready to state a key technical tool
from~\cite[Theorem~2]{Chaum_Ern_Voh_curl_elm_20}, based
on~\cite[Theorem~7.2]{Demk_Gop_Sch_ext_II_09}
and~\cite[Proposition~4.2]{Cost_McInt_Bog_Poinc_10}.

\begin{proposition}[Stable $\HH(\ccurl)$ polynomial extension on a tetrahedron]
\label{prop_stability_tetrahedra}
Let $K$ be a tetrahedron and let
$\emptyset\subseteq \FF \subseteq \FF_K$ be a (sub)set of its faces.
Then, for every polynomial degree $p \geq 0$, for all $\rr_K \in \RT_p(K)$ such that
$\div \rr_K = 0$, and if $\FF \ne \emptyset$, for all $\rr_\FF \in \NN_p^\ttau(\Gamma_\FF)$
such that $\rr_K \cdot \nn_{F} = \scurl_F (\rr_F)$ for all $F \in \FF$, the following holds:
\begin{equation}
\label{eq_minimization_element_K}
\min_{\substack{
\vv_p \in \NN_p(K)
\\
\curl \vv_p = \rr_K
\\
\vv_{p}|^\ttau_\FF = \rr_\FF
}}
\|\vv_p\|_{K}
\le C_{\mathrm{st},K}
\min_{\substack{
\vv \in \HH(\ccurl,K)
\\
\curl \vv = \rr_K
\\
\vv|^\ttau_\FF = \rr_\FF
}}
\|\vv\|_{K},
\end{equation}
where the condition on the tangential trace in the minimizing sets is null if $\emptyset=\FF$.
Both minimizers in~\eqref{eq_minimization_element_K} are uniquely defined and the constant
$C_{\mathrm{st},K}$ only depends on the shape-regularity parameter $\kappa_K$ of $K$.
\end{proposition}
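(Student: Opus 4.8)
The plan is to reduce the assertion to two classical $p$-robust building blocks, and to combine them in the order ``first fix the curl, then correct the tangential trace by a curl-free field''. The first block is the family of regularized Poincar\'e operators of Costabel--McIntosh~\cite[Proposition~4.2]{Cost_McInt_Bog_Poinc_10}: on the star-shaped simplex $K$ they give bounded operators $\mathcal R_1,\mathcal R_2$ realizing the exact homotopy $\uu = \grad(\mathcal R_1\uu) + \mathcal R_2(\curl\uu)$ for all $\uu \in \HH(\ccurl,K)$, with $\mathcal R_2$ mapping $\{\rr\in\RT_p(K):\div\rr=0\}$ into $\NN_p(K)$ and with the scale-invariant bound $\|\grad(\mathcal R_1\uu)\|_K \le C(\kappa_K)\|\uu\|_K$. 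The second block is the stable curl-free $\HH(\ccurl)$ polynomial extension of~\cite[Theorem~7.2]{Demk_Gop_Sch_ext_II_09} on the faces $\FF$ (equivalently, since curl-free N\'ed\'elec fields on the simply connected $K$ are gradients of $\mathcal P_{p+1}(K)$ polynomials, the $H^1$ polynomial extension of~\cite{Demk_Gop_Sch_ext_I_09}). First I would dispatch the structural facts: $\|\cdot\|_K^2$ is strictly convex and each constraint set in~\eqref{eq_minimization_element_K} is a closed affine subspace, nonempty because the construction below exhibits an element of the discrete one (hence, a fortiori, of the continuous one); so both minimizers exist and are unique. It thus suffices to build one $\vv_p \in \NN_p(K)$ with $\curl\vv_p=\rr_K$ and $\vv_p|^\ttau_\FF=\rr_\FF$ such that $\|\vv_p\|_K \le C_{\mathrm{st},K}\|\vv^\star\|_K$, where $\vv^\star$ is the (unique) minimizer on the right-hand side of~\eqref{eq_minimization_element_K}.

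\textbf{Step 1: fixing the curl.} Set $\boldsymbol z_p \eq \mathcal R_2\rr_K \in \NN_p(K)$. Since $\div\rr_K=0$, the $2$-form homotopy gives $\curl\boldsymbol z_p=\rr_K$. The crucial point is to bound $\|\boldsymbol z_p\|_K$ by $\|\vv^\star\|_K$ rather than by $\|\rr_K\|_K$ (which would be far too large and would forfeit $p$-robustness): applying the $1$-form homotopy to $\vv^\star$ and using $\curl\vv^\star=\rr_K$ yields $\boldsymbol z_p = \mathcal R_2(\curl\vv^\star) = \vv^\star - \grad(\mathcal R_1\vv^\star)$, whence $\|\boldsymbol z_p\|_K \le \|\vv^\star\|_K + \|\grad(\mathcal R_1\vv^\star)\|_K \le (1+C(\kappa_K))\|\vv^\star\|_K$.

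\textbf{Step 2: fixing the tangential trace.} Consider the datum $\rr_\FF - \boldsymbol z_p|^\ttau_\FF$; it lies in $\NN^\ttau_p(\Gamma_\FF)$ (both terms do, by linearity), and on each $F\in\FF$ one has $\scurl_F(\rr_F - \ppi^\ttau_F(\boldsymbol z_p)) = \scurl_F(\rr_F) - (\curl\boldsymbol z_p)|_F\cdot\nn_F = \scurl_F(\rr_F) - \rr_K\cdot\nn_F = 0$ by the compatibility hypothesis. Hence $\rr_\FF - \boldsymbol z_p|^\ttau_\FF$ is an admissible datum for a curl-free extension, and~\cite[Theorem~7.2]{Demk_Gop_Sch_ext_II_09} provides $\ww_p \in \NN_p(K)$ with $\curl\ww_p=\bzero$, $\ww_p|^\ttau_\FF = \rr_\FF - \boldsymbol z_p|^\ttau_\FF$, and $\|\ww_p\|_K \le C(\kappa_K)\,\min\{\|\uu\|_K : \uu\in\HH(\ccurl,K),\ \curl\uu=\bzero,\ \uu|^\ttau_\FF = \rr_\FF - \boldsymbol z_p|^\ttau_\FF\}$. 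Taking $\vv^\star - \boldsymbol z_p$ as competitor in that minimum (it has zero curl and the right tangential trace) bounds it by $\|\vv^\star - \boldsymbol z_p\|_K \le \|\vv^\star\|_K + \|\boldsymbol z_p\|_K \le (2+C(\kappa_K))\|\vv^\star\|_K$. Setting $\vv_p \eq \boldsymbol z_p + \ww_p \in \NN_p(K)$ then yields $\curl\vv_p=\rr_K$, $\vv_p|^\ttau_\FF=\rr_\FF$, and $\|\vv_p\|_K \le \|\boldsymbol z_p\|_K + \|\ww_p\|_K \le C_{\mathrm{st},K}\|\vv^\star\|_K$ with $C_{\mathrm{st},K}$ aggregating the $\kappa_K$-dependent constants. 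The degenerate cases are immediate: if $\FF=\emptyset$ take $\vv_p=\boldsymbol z_p$; if $\FF=\FF_K$, the edge-compatibility built into $\NN^\ttau_p(\Gamma_\FF)$ together with Stokes' theorem makes $\rr_K\cdot\nn_F=\scurl_F(\rr_F)$ globally consistent and Step~2 applies verbatim.

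\textbf{Main obstacle.} The genuinely deep content is entirely outsourced and is the $p$-independence of the constants in the two building blocks: for~\cite{Cost_McInt_Bog_Poinc_10} it rests on the explicit integral-kernel form of the regularized Poincar\'e operators (a mollified path integral), which simultaneously gives boundedness on Sobolev scales with geometry-only constants and preservation of the polynomial de Rham subcomplex; for~\cite{Demk_Gop_Sch_ext_II_09,Demk_Gop_Sch_ext_I_09} it rests on face-by-face, then edge-by-edge, then vertex constructions tuned via Jacobi-polynomial identities. The more bookkeeping-like difficulty, and the one that makes the \emph{statement} as written correct (discrete minimum controlled by the \emph{continuous} minimum, not by raw data norms), is to apply the homotopy to $\vv^\star$ in Step~1 and to choose $\vv^\star-\boldsymbol z_p$ as the competitor in Step~2, so that no inverse inequality is ever invoked; keeping the $\scurl_F$-compatibility conditions aligned between the two steps is then exactly what the hypothesis $\rr_K\cdot\nn_F=\scurl_F(\rr_F)$ secures.
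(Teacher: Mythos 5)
The paper itself does not prove Proposition~\ref{prop_stability_tetrahedra}: it is imported as a black box from \cite[Theorem~2]{Chaum_Ern_Voh_curl_elm_20}, and the two building blocks you invoke --- \cite[Proposition~4.2]{Cost_McInt_Bog_Poinc_10} and \cite[Theorem~7.2]{Demk_Gop_Sch_ext_II_09} --- are exactly the ones the paper credits that result to. Your two-step architecture (lift the curl with the regularized Poincar\'e operator $\mathcal R_2$, then correct the tangential trace by a curl-free discrete field), and in particular the device of writing $\mathcal R_2(\rr_K)=\vv^\star-\grad(\mathcal R_1\vv^\star)$ so that the lift is controlled by the continuous minimizer rather than by $\|\rr_K\|_K$, is the right route and matches the structure of the companion proof. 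The verification that the corrected datum $\rr_\FF-\boldsymbol z_p|^\ttau_\FF$ has vanishing surface curl and inherits the edge compatibility, and the choice of $\vv^\star-\boldsymbol z_p$ as competitor, are also correct.

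The one place where you are too quick is Step~2 when $\emptyset\ne\FF\subsetneq\FF_K$. \cite[Theorem~7.2]{Demk_Gop_Sch_ext_II_09} is an extension from the \emph{whole} boundary $\partial K$, stable in the $\HH^{-1/2}(\scurl,\partial K)$ trace norm; invoking it ``on the faces $\FF$'' with a minimum-norm stability bound over fields constrained only on $\Gamma_\FF$ is a claim, not a citation. One must either first extend the datum from $\Gamma_\FF$ to all of $\partial K$ in a stable, curl-compatible way, or follow your parenthetical reduction: since the datum has zero surface curl and $\Gamma_\FF$ is simply connected, it is a surface gradient, and the needed statement becomes the $H^1$ minimum-energy polynomial extension from a \emph{subset} of faces of a tetrahedron. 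That statement is itself not literally \cite{Demk_Gop_Sch_ext_I_09} but a nontrivial consequence of it established in \cite{Ern_Voh_p_rob_3D_20}, and translating the $H^1$ minimum over $\|\grad q\|_K$ back into the minimum over curl-free $\HH(\ccurl,K)$ fields with prescribed weak tangential trace requires a short additional argument. This is where the bulk of the work in the cited companion proof lies; the rest of your sketch is sound and correctly isolates the $p$-robustness as residing entirely in the two quoted operators.
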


\subsection{Piola mappings}
\label{sec_Piola}

This short section reviews some useful properties of Piola mappings used below,
see~\cite[Chapter~9]{Ern_Guermond_FEs_I_21}. Consider two tetrahedra
$\Kin,\Kout \subset \mathbb R^3$ and an invertible affine mapping
$\TTT: \mathbb R^3 \to \mathbb R^3$ such that $\Kout = \TTT(\Kin)$.
Let $\JJJ_{\TTT}$ be the (constant) Jacobian matrix of $\TTT$.
Note that we do not require that $\det \JJJ_{\TTT}$ is positive. The affine mapping $\TTT$
can be identified by specifying the image of each vertex of $\Kin$. We consider the
covariant and contravariant Piola mappings
\begin{equation*}
\ppsi^{\mathrm{c}}_{\TTT}(\vv) = \left (\JJJ_{\TTT}\right )^{-T}
\left (\vv \circ \TTT^{-1} \right ),
\qquad
\ppsi^{\mathrm{d}}_{\TTT}(\vv) = \frac{1}{\det \left (\JJJ_{\TTT}\right )} \JJJ_{\TTT}
(\vv \circ \TTT^{-1})
\end{equation*}
for vector-valued fields
$\vv: \Kin \to \mathbb R^3$. It is well-known that $\ppsi^{\mathrm{c}}_{\TTT}$
maps $\HH(\ccurl,\Kin)$ onto $\HH(\ccurl,\Kout)$ and it maps
$\NN_p(\Kin)$ onto $\NN_p(\Kout)$ for any polynomial degree $p\ge0$.
Similarly, $\ppsi^{\mathrm{d}}_{\TTT}$
maps $\HH(\ddiv,\Kin)$ onto $\HH(\ddiv,\Kout)$ and it maps
$\RT_p(\Kin)$ onto $\RT_p(\Kout)$. Moreover,
the Piola mappings $\ppsi^{\mathrm{c}}_{\TTT}$
and $\ppsi^{\mathrm{d}}_{\TTT}$ commute with the curl operator in the
sense that
\begin{equation}
\label{eq_piola_commute}
\curl \left (\ppsi^{\mathrm{c}}_{\TTT}(\vv)\right )
=
\ppsi^{\mathrm{d}}_{\TTT}\left (\curl \vv\right )
\quad
\forall \vv \in \HH(\ccurl,\Kin).
\end{equation}
In addition, we have
\begin{equation}
\label{eq_piola_adjoint}
(\ppsi^{\mathrm{c}}_{\TTT}(\vv_{\rm in}),\vv_{\rm out})_{\Kout}
=
\sign (\det \JJJ_{\TTT}) (\vv_{\rm in},(\ppsi^{\mathrm{d}}_{\TTT})^{-1}(\vv_{\rm out}))_{\Kin},
\end{equation}
for all $\vv_{\rm in} \in \HH(\ccurl,\Kin)$ and $\vv_{\rm out} \in \HH(\ccurl,\Kout)$.
We also have $\|\ppsi_{\TTT}^{\mathrm{c}}(\vv)\|_{\Kout} \leq
\frac{h_{\Kin}}{\rho_{\Kout}} \|\vv\|_{\Kin}$ for all $\vv \in \LL^2(\Kin)$, so that
whenever $\Kin,\Kout$ belong to the same edge patch $\TTe$, we have
\begin{equation}
\label{eq_stab_piola_L2}
\|\ppsi_{\TTT}^{\mathrm{c}}(\vv)\|_{\Kout}
\leq
C \|\vv\|_{\Kin} \quad \forall \vv \in \LL^2(\Kin),
\end{equation}
for a constant $C$ only depending on the shape-regularity $\kappa_\edge$ of the patch $\TTe$
defined in~\eqref{eq_regularities}.

\subsection{Stability of discrete minimization in an edge patch}

\subsubsection{Preliminaries}

In this section, we consider an edge patch $\TTe$ associated with a mesh edge $\edge\in\EE_h$
consisting of tetrahedral elements $K$ sharing the edge $\edge$, cf. Figure~\ref{fig_patch}.
We denote by $n \eq |\TTe|$ the number of tetrahedra in the patch, by $\FFe$ the set of all faces of
the patch, by $\FFei \subset \FFe$ the set of ``internal'' faces, i.e., those being shared by two
different tetrahedra from the patch, and finally, by
$\FFee \eq \FFe \setminus \FFei$ the set of ``external'' faces.
The patch is either of ``interior'' type, corresponding to an edge in the interior of the
domain $\Omega$, in which case there is a full loop around $\edge$, see Figure~\ref{fig_patch},
left, or of ``boundary'' type, corresponding to an edge on the boundary of the domain $\Omega$,
in which case there is no full loop around $\edge$, see Figure~\ref{fig_patch}, right, and
Figure~\ref{figure_numbering_patch}.
We further distinguish three types of patches of boundary type depending on the status of
the two boundary faces sharing the associated boundary edge: the patch is of Dirichlet boundary
type if both faces lie in $\overline \GD$, of ``mixed boundary'' type if one face lies
in $\overline \GD$ an the other in $\overline \GN$, and of Neumann boundary type
if both faces lie in $\overline \GN$. Note that for an interior patch, $|\FFei| = n$,
whereas $|\FFei| = n-1$ for a boundary patch. The open domain associated with $\TTe$ is denoted
by $\ome$, and $\nn_{\ome}$ stands for the unit normal vector to $\partial \ome$ pointing
outward $\ome$.

\begin{figure}[htb]
\centerline{\includegraphics[height=0.35\textwidth]{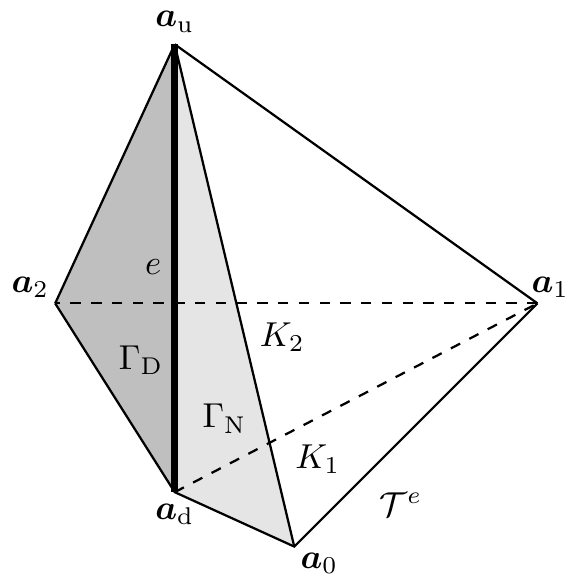}
\qquad \qquad
\includegraphics[height=0.35\textwidth]{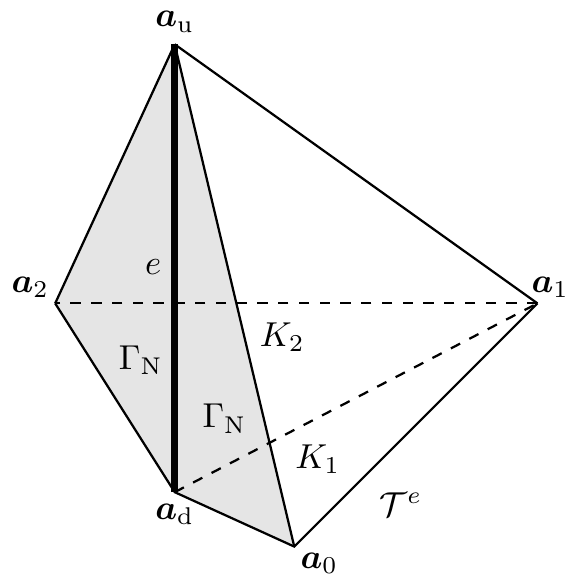}}
\caption{Mixed (left) and Neumann (right) boundary patch $\TTe$}
\label{figure_numbering_patch}
\end{figure}

We denote by $\adown$ and $\aup$ the two vertices of the edge $\edge$.
The remaining vertices are numbered consecutively in one sense of rotation around the edge $\edge$
(this sense is only specific for the ``mixed boundary'' patches) and denoted by
$\aaa_{0},\aaa_{1},\dots,\aaa_{n}$, with $\aaa_{0} = \aaa_{n}$ if the patch is interior.
Then $\TTe = \bigcup_{j\in\{1:n\}} K_j$ for $K_j \eq \conv(\aaa_{j-1},\aaa_{j},\adown,\aup)$;
we also denote $K_0 \eq K_n$ and $K_{n+1} \eq K_1$. For all $j\in\{0:n\}$, we define
$F_{j} \eq \conv(\aaa_{j},\adown,\aup)$, and for all $j\in\{1:n\}$, we let
$\Fdown_j \eq \conv(\aaa_{j-1},\aaa_{j},\adown)$ and $\Fup_j \eq \conv(\aaa_{j-1},\aaa_{j},\aup)$.
Then $\FF_{K_j} = \{F_{j-1},F_{j},\Fdown_j,\Fup_j\}$,
and $F_0 = F_n$ if the patch is interior. We observe that, respectively for interior
and boundary patches, $\FFei = \bigcup_{j\in\{0:n-1\}} \{F_{j}\}$,
$\FFee = \bigcup_{j\in\{1:n\}} \{\Fdown_j,\Fup_j\}$ and
$\FFei = \bigcup_{j\in\{1:n-1\}} \{F_{j}\}$,
$\FFee = \bigcup_{j\in\{1:n\}} \{\Fdown_j,\Fup_j\} \cup \{F_0,F_n\}$.
Finally, if $F_{j} \in \FFei$ is an internal face, we define its normal vector
by $\nn_{F_{j}} \eq \nn_{K_{j+1}} = -\nn_{K_j}$, whereas for any external face $F \in \FFee$, we
define its normal vector to coincide with the normal vector pointing outward the patch,
$\nn_F \eq \nn_{\ome}$.

We now extend the notions of Section~\ref{sec:preliminaries_K} to the edge patch $\TTe$.
Consider the following broken Sobolev spaces:
\begin{align*}
\HH(\ccurl,\TTe)
&\eq
\left \{
\vv \in \LL^2(\ome) \; | \; \vv|_K \in \HH(\ccurl,K)
\; \forall K \in \TTe
\right \},
\\
\HH^1(\TTe)
&\eq
\left \{
\vv \in \LL^2(\ome) \; | \; \vv|_K \in \HH^1(K)
\; \forall K \in \TTe
\right \},
\end{align*}
as well as the broken N\'ed\'elec space  $\NN_{p}(\TTe)$.
For all $\vv \in \HH^1(\TTe)$, we employ the notation $\jump{\vv}_F \in \LL^2(F)$
for the ``(strong) jump'' of $\vv$ across any face $F\in\FF^\edge$. Specifically, for an internal face
$F_{j} \in \FFei$, we set $\jump{\vv}_{F_{j}} \eq (\vv|_{K_{j+1}})|_{F_{j}} - (\vv|_{K_j})|_{F_{j}}$,
whereas for an external face $F \in \FFee$, we set $\jump{\vv}_F \eq \vv|_F$. Note in particular that
piecewise polynomial functions from $\NN_{p}(\TTe)$ belong to $\HH^1(\TTe)$, so that their strong
jumps are well-defined.

To define a notion of a ``weak tangential jump'' for functions of $\HH(\ccurl,\TTe)$,
for which a strong (pointwise) definition cannot apply, some preparation is necessary.
Let $\FF$ be a subset of the faces of an edge patch $\TTe$ containing the internal faces,
i.e. $\FFei \subseteq \FF \subseteq \FFe$, and denote by $\Gamma_\FF$ the corresponding
open set. The set $\FF$ \revision{represents the set of faces appearing in the minimization. It}
depends on the type of edge patch and is reported in Table~\ref{Tab:type_of_calF}. In extension
of~\eqref{eq_tr_K}, we define the piecewise polynomial space on $\Gamma_\FF$
\begin{equation}
\label{eq_tr_Te}
\NN_p^\ttau(\Gamma_\FF) \eq
\left \{
\ww_\FF \in \LL^2(\Gamma_\FF) \; | \;
\exists \vv_p \in \NN_{p}(\TTe);
\ww_F \eq (\ww_\FF)|_F = \ppi^\ttau_F (\jump{\vv_p}_F) \quad \forall F \in \FF
\right \}.
\end{equation}
In extension of~\eqref{eq_scurl_curl_el}, for all $\ww_\FF \in \NN_p^\ttau(\Gamma_\FF)$, we set
\begin{equation}
\label{eq_scurl_curl_patch}
    \scurl_F (\ww_F) \eq \jump{\curl \vv_p}_F \cdot \nn_F \qquad \forall F \in \FF.
\end{equation}
Then we can extend Definition~\ref{definition_partial_trace} to prescribe weak tangential jumps
of functions in $\HH(\ccurl,\TTe)$ as follows:

\begin{table}[tb]
\begin{center}\begin{tabular}{|l|l|l|}
\hline
patch type&\hfil $\FFei$&\hfil $\FF$\\
\hline
interior&$\{F_1,\ldots,F_n\}$&$\FFei = \{F_1,\ldots,F_n\}$\\
Dirichlet boundary&$\{F_1,\ldots,F_{n-1}\}$&$\FFei = \{F_1,\ldots,F_{n-1}\}$\\
mixed boundary&$\{F_1,\ldots,F_{n-1}\}$&$\{F_0\}\cup \FFei = \{F_0,F_1,\ldots,F_{n-1}\}$\\
Neumann boundary&$\{F_1,\ldots,F_{n-1}\}$&$\{F_0\}\cup\FFei\cup\{F_n\}=\{F_0,F_1,\ldots,F_{n-1},F_n\}$\\
\hline
\end{tabular}\end{center}
\caption{The set of internal faces $\FFei$ and the set $\FF$ used for the minimization problems on the edge patch for the four patch types.}
\label{Tab:type_of_calF}%
\end{table}

\begin{definition}[Tangential jumps by integration by parts in an edge patch]
\label{definition_jumps}
Given $\rr_{\FF} \in \NN_p^\ttau(\Gamma_\FF)$ and $\vv \in \HH(\ccurl,\TTe)$,
we employ the notation ``$\jump{\vv}^\ttau_\FF = \rr_\FF$'' to say that
\begin{equation}\label{eq_prescription_tang_jump_e}
\sum_{K \in \TTe}
\left \{
(\curl \vv,\pphi)_K - (\vv,\curl \pphi)_K
\right \}
=
\sum_{F \in \FF} (\rr_F,\pphi \times \nn_F)_F
\quad
\forall \pphi \in \HH^1_{\ttau,\FF^{\mathrm{c}}}(\TTe),
\end{equation}
where
\begin{equation} \label{eq_Htpatch}
\HH^1_{\ttau,\FF^{\mathrm{c}}}(\TTe)
\eq
\left \{
\pphi \in \HH^1(\TTe) \; | \;
\jump{\pphi}_F \times \nn_F = \boldsymbol 0 \quad
\forall F \in \FFei \cup (\FFee \setminus \FF)
\right \}.
\end{equation}
Whenever $\vv \in \HH^1(\TTe)$, $\jump{\vv}^\ttau_\FF = \rr_\FF$ if and only if
$\ppi^\ttau_F(\jump{\vv}_F) = \rr_F$ for all $F \in \FF$. Note that
$\pphi\times \nn_F$ in~\eqref{eq_prescription_tang_jump_e} is uniquely defined for
all $\pphi \in \HH^1_{\ttau,\FF^{\mathrm{c}}}(\TTe)$.
\end{definition}

\subsubsection{Statement of the stability result in an edge patch}

Henceforth, if $\rr_\TT \in \RT_p(\TTe)$ is an elementwise Raviart--Thomas function,
we will employ the notation $\rr_K \eq \rr_\TT|_K$ for all $K \in \TTe$. In addition,
if $\vv_{\TT} \in \NN_{p}(\TTe)$ is an elementwise N\'ed\'elec function, the notations
$\div \rr_\TT$ and $\curl \vv_{\TT}$ will be understood elementwise.

\begin{definition}[Compatible data]
\label{definition_compatible_data}
Let $\rr_\TT \in \RT_p(\TTe)$ and $\rr_\FF \in \NN_p^\ttau(\Gamma_\FF)$.
We say that the data $\rr_\TT$ and $\rr_\FF$ are compatible if
\bse\begin{align}
\div \rr_\TT & = 0, \label{eq_comp_a} \\
\jump{\rr_\TT}_{F} \cdot \nn_F & = \scurl_F (\rr_F) \quad \forall F \in \FF, \label{eq_comp_b}
\end{align}
and with the following additional condition whenever the patch is either of interior or Neumann
boundary type:
\begin{alignat}{2}
\sum_{j\in\{1:n\}} \rr_{F_j}|_\edge \cdot \ttau_\edge
&=
0
&\qquad&
\textrm{(interior type)},
\label{eq_comp_c} \\
\sum_{j\in\{0:n-1\}} \rr_{F_j}|_\edge \cdot \ttau_\edge
&=
\rr_{F_n}|_\edge \cdot \ttau_\edge
&\qquad&
\textrm{(Neumann boundary type)}.
\label{eq_comp_d}
\end{alignat}\ese
\end{definition}

\begin{definition}[Broken patch spaces]\label{def_spaces}
Let $\rr_\TT \in \RT_p(\TTe)$ and $\rr_\FF \in \NN_p^\ttau(\Gamma_\FF)$
be compatible data as per Definition~\ref{definition_compatible_data}. We define
\bse\begin{align}
\boldsymbol V(\TTe) & \eq
\left \{
\vv \in \HH(\ccurl,\TTe) \; \left |
\begin{array}{rl}
\curl \vv &= \rr_\TT
\\
\jump{\vv}^\ttau_\FF &= \rr_\FF
\end{array}
\right .
\right \}, \label{eq_VTe}\\
\boldsymbol V_p(\TTe) & \eq \boldsymbol V(\TTe) \cap \NN_p(\TTe). \label{eq_VqTe}
\end{align}\ese
\end{definition}
We will show in Lemma~\ref{lem_EU} below that the space $\boldsymbol V_p(\TTe)$
(and therefore also $\boldsymbol V(\TTe)$) is nonempty.
We are now ready to present our central result of independent interest.
To facilitate the reading, the proof is postponed to Section~\ref{sec:proof_stability_patch}.

\begin{proposition}[Stable broken $\HH(\ccurl)$ polynomial extension in an edge patch]
\label{prop_stability_patch}
Let an edge $\edge\in\EE_h$ and the associated edge patch $\TTe$ with subdomain $\ome$
be fixed. Let the set of faces $\FF$ be specified in Table~\ref{Tab:type_of_calF}.
Then, for every polynomial degree $p \geq 0$, all $\rr_\TT \in \RT_p(\TTe)$, and
all $\rr_\FF \in \NN_p^\ttau(\Gamma_\FF)$ compatible as per
Definition~\ref{definition_compatible_data}, the following holds:
\begin{equation} \label{eq_stab_shift}
\min_{\vv_p\in {\boldsymbol V}_p(\TTe)} \|\vv_p\|_{\ome}
= \min_{\substack{
\vv_p \in \NN_p(\TTe)
\\
\curl \vv_p = \rr_\TT
\\
\jump{\vv_p}^\ttau_\FF = \rr_\FF
}}
\|\vv_p\|_{\ome}
\le C_{\mathrm{st},\edge}
\min_{\substack{
\vv \in \HH(\ccurl,\TTe)
\\
\curl \vv = \rr_\TT
\\
\jump{\vv}^\ttau_\FF = \rr_\FF
}}
\|\vv\|_{\ome} =
C_{\mathrm{st},\edge} \min_{\vv\in {\boldsymbol V}(\TTe)} \|\vv\|_{\ome}.
\end{equation}
Here, all the minimizers are uniquely defined and the constant $C_{\mathrm{st},\edge}$
only depends on the shape-regularity parameter $\kappa_\edge$ of the patch $\TTe$
defined in~\eqref{eq_regularities}.
\end{proposition}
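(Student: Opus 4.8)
The two equalities in~\eqref{eq_stab_shift} are nothing but the definitions of $\VV(\TTe)$ and $\VV_p(\TTe)$ from Definition~\ref{def_spaces}, so only the middle inequality is at stake, and the plan is to establish it by an explicit \emph{element-by-element sweep} through the patch; this construction will simultaneously yield the sweep form of Theorem~\ref{thm_sweep} through Remark~\ref{rem_sweep_brok}. First I would record that all minimizers are well defined: by Lemma~\ref{lem_EU} the sets $\VV_p(\TTe) \subseteq \VV(\TTe)$ are nonempty; since $\|\curl \vv\|_\ome = \|\rr_\TT\|_\ome$ is constant on $\VV(\TTe)$, minimizing $\|\vv\|_\ome$ there is equivalent to minimizing the $\HH(\ccurl,\TTe)$-graph norm over a closed affine subspace of a Hilbert space, which gives a unique minimizer $\vv^\star \in \VV(\TTe)$; uniqueness of the discrete minimizer is immediate since $\VV_p(\TTe)$ is a nonempty finite-dimensional affine set. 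It then suffices to exhibit \emph{one} $\vv_p \in \VV_p(\TTe)$ with $\|\vv_p\|_\ome \le \Cste \|\vv^\star\|_\ome$.

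\textbf{The sweep.} Enumerate $\TTe = \{K_1,\dots,K_n\}$ as in Theorem~\ref{thm_sweep} (arbitrarily for an interior edge; starting from a tetrahedron carrying a face in $\GeN$, or else in $\GeD$, for a boundary edge), so that $K_j$ and $K_{j+1}$ share the face $F_j$. I would build $\vv_p|_{K_j}$ recursively. On $K_j$, let $\FF_j \subseteq \FF_{K_j}$ collect the faces whose tangential component is already frozen: this is $\emptyset$ or $\{F_0\}$ for $K_1$, exactly $\{F_{j-1}\}$ for $2 \le j \le n-1$, and possibly enriched by the boundary face(s) $F_0$ and/or $F_n$ lying in $\FF$ according to Table~\ref{Tab:type_of_calF}, so that $\FF_n$ contains the two faces meeting along $\edge$ in the interior and Neumann cases. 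On each $F \in \FF_j$ I prescribe the tangential trace fixed by $\rr_F$ together with the jump relation $\jump{\vv_p}^\ttau_F = \rr_F$ relative to the already-constructed neighbour (or simply $\rr_F$ on an external boundary face), and set $\vv_p|_{K_j}$ equal to the minimizer furnished by Proposition~\ref{prop_stability_tetrahedra} on $K_j$ for the data $(\rr_{K_j}, (\rr_F)_{F \in \FF_j})$. The hypotheses of that proposition follow from Definition~\ref{definition_compatible_data}: $\div \rr_{K_j}=0$ from~\eqref{eq_comp_a}, $\rr_{K_j} \cdot \nn_F = \scurl_F(\rr_F)$ from~\eqref{eq_comp_b}, and the edge-compatibility~\eqref{eq_edge_compatibility_condition} is vacuous whenever $|\FF_j|=1$, while for $K_n$ with $|\FF_n|=2$ it holds precisely because of the loop-closing conditions~\eqref{eq_comp_c}--\eqref{eq_comp_d}, after one tracks how the $\ttau_\edge$-component of the prescribed traces propagates along the sweep.

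\textbf{Stability.} Set $\ww_j \eq \vv_p|_{K_j} - \vv^\star|_{K_j}$, an element of $\NN_p(K_j)$ with $\curl \ww_j = \bzero$ and vanishing tangential trace on $\FF_j$. At $K_j$, Proposition~\ref{prop_stability_tetrahedra} yields $\|\vv_p|_{K_j}\|_{K_j} \le C_{\mathrm{st},K_j}\, m_j$, with $m_j$ the continuous constrained minimum on $K_j$, and I would bound $m_j$ using the candidate $\vv^\star|_{K_j} + \ppsi^{\mathrm c}_{\TTT_j}(\ww_{j-1})$, where $\TTT_j \colon K_{j-1} \to K_j$ is the affine map equal to the identity on the shared face $F_{j-1}$ and sending the opposite vertex of $K_{j-1}$ onto that of $K_j$ (for $K_1$ one simply takes $\vv^\star|_{K_1}$). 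By~\eqref{eq_piola_commute} this candidate has element-wise curl $\rr_{K_j}$; since the covariant Piola map with $\TTT_j|_{F_{j-1}} = \mathrm{id}$ preserves the tangential trace on $F_{j-1}$ and the jump relation makes the required correction to the $F_{j-1}$-trace of $\vv^\star$ equal exactly to $\ppi^\ttau_{F_{j-1}}(\ww_{j-1})$, the candidate is admissible for $m_j$ — the extra traces it may create on the external faces $\Fdown_j, \Fup_j$ being harmless, as these never belong to $\FF$ — and the $\LL^2$-stability~\eqref{eq_stab_piola_L2} gives $m_j \le \|\vv^\star\|_{K_j} + C \|\ww_{j-1}\|_{K_{j-1}}$. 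Hence $\|\ww_j\|_{K_j} \le \|\vv_p|_{K_j}\|_{K_j} + \|\vv^\star\|_{K_j} \le A\|\vv^\star\|_{K_j} + B\|\ww_{j-1}\|_{K_{j-1}}$ with $A,B$ depending only on $\kappa_\edge$, so $\|\ww_j\|_{K_j} \le C(\kappa_\edge) \sum_{i \le j} \|\vv^\star\|_{K_i}$. As the number $n$ of elements around $\edge$ is itself bounded in terms of $\kappa_\edge$, summing the bounds on $\|\vv_p|_{K_j}\|_{K_j}$ and using $\sum_i \|\vv^\star\|_{K_i}^2 = \|\vv^\star\|_\ome^2$ yields~\eqref{eq_stab_shift} with $\Cste$ depending only on $\kappa_\edge$.

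\textbf{Main obstacle.} The genuinely delicate point is the last tetrahedron $K_n$ in the interior and Neumann-boundary cases, where two faces sharing $\edge$ are constrained at once: one must verify the edge-compatibility~\eqref{eq_edge_compatibility_condition} of the two frozen traces (this is exactly where~\eqref{eq_comp_c}--\eqref{eq_comp_d} are consumed, after a careful accounting of the $\ttau_\edge$-components through the sweep), and, for the stability estimate, augment the candidate above by a second curl-free correction on $K_n$ that fixes the loop-closing trace without disturbing the one already imposed on $F_{n-1}$, i.e.\ solve stably a homogeneous-curl two-face extension on $K_n$ — again a finite-dimensional, shape-regularity-controlled argument. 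Everything else parallels the $H^1$ and $\HH(\ddiv)$ patch constructions of~\cite{Brae_Pill_Sch_p_rob_09, Ern_Voh_p_rob_3D_20}, and since $\vv_p$ is produced by precisely the enumeration of Theorem~\ref{thm_sweep}, Remark~\ref{rem_sweep_brok} delivers that theorem as well.
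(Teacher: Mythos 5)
Your overall strategy coincides with the paper's: build the discrete competitor by an elementwise sweep using the single-tetrahedron extension of Proposition~\ref{prop_stability_tetrahedra}, and control the continuous constrained minimum on each $K_j$ by the competitor $\vv^\star|_{K_j}$ corrected by the Piola transport of $\ww_{j-1}\eq\vv_p|_{K_{j-1}}-\vv^\star|_{K_{j-1}}$ across the shared face. For the first and the middle elements, and for Dirichlet or mixed boundary patches (where the last element also carries a single constrained face), your argument matches the paper's step by step and is sound, including the bookkeeping of the $\ttau_\edge$-components that consumes~\eqref{eq_comp_c}--\eqref{eq_comp_d}.

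The gap sits exactly where you locate the ``main obstacle'', and your proposed fix would not work. For an interior or Neumann patch the last element carries the two constrained faces $F_{n-1}$ and $F_n$, and you propose to take the middle-element candidate (which fixes $F_{n-1}$) and add ``a second curl-free correction on $K_n$ that fixes the loop-closing trace without disturbing the one already imposed on $F_{n-1}$'', described as a finite-dimensional, shape-regularity-controlled argument. The trace to be corrected on $F_n$ is $\ppi^\ttau_{F_n}(\vv_p|_{K_1}-\vv^\star|_{K_1})$, a quantity originating on $K_1$ and controlled only through the volume norm $\|\ww_1\|_{K_1}$ (note that $\vv^\star|_{K_1}$ has no strong trace, only a weak tangential one). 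A lifting of this datum into $K_n$ that is constrained to vanish tangentially on $F_{n-1}$ can only be bounded in $\LL^2(K_n)$ via a trace norm of the datum, and passing from $\|\ww_1\|_{K_1}$ to a trace norm of $\ppi^\ttau_{F_n}(\ww_1)$ costs an inverse-type inequality whose constant degrades with $p$ --- precisely what the proposition must avoid. Conversely, the $\LL^2$-stable correction, namely the Piola image of the volume field $\ww_1$ under an affine map $K_1\to K_n$, does not vanish on $F_{n-1}$ and pollutes the trace already imposed there. The paper resolves this with a different device: it sets $\zzeta_n=\vv^\star_n-\sum_{j\in\{1:n-1\}}\epsilon_{j,n}\,\ppsi^{\mathrm c}_{j,n}(\xxi_p^{\star j}-\vv^\star_j)$, transporting the discrepancies from \emph{all} previous elements onto $K_n$ through affine maps chosen compatibly on the interfaces ($\TTT_{j,n}(F_j)=\TTT_{j+1,n}(F_j)$, which forces the even/odd case distinction and the auxiliary subdivision when $n$ is odd), and then verifies the two weak trace conditions simultaneously by a telescoping identity that exploits the fact that $\xxi_p^\star$ and $\vv^\star$ share the same prescribed jumps $\rr_\FF$ over the whole patch. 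This global cancellation, rather than a local two-face lifting on $K_n$, is the missing idea; without it the bound $\|\zzeta_n\|_{K_n}\lesssim\|\vv^\star\|_{\ome}$ with a $p$-independent constant is not established.
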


\begin{remark}[Converse inequality in Proposition~\ref{prop_stability_patch}]
\label{rem_conv}
Note that the converse to the inequality~\eqref{eq_stab_shift} holds trivially with constant one.
\end{remark}

\subsection{Equivalence of Theorem~\ref{theorem_stability} with Proposition~\ref{prop_stability_patch}}

$ $
\revision{We have the following important link, establishing Theorem~\ref{theorem_stability},
including the existence and uniqueness of the minimizers.

\begin{lemma}[Equivalence of Theorem~\ref{theorem_stability} with
Proposition~\ref{prop_stability_patch}]
Theorem~\ref{theorem_stability} holds if and only if Proposition~\ref{prop_stability_patch} holds.
{\color{black} More precisely, let $\hh_p^\star \in \NN_p(\TTe) \cap \HH_{\GeN}(\ccurl,\ome)$
and $\hh^\star \in \HH_{\GeN}(\ccurl,\ome)$ by any solutions to the minimization problems of
Theorem~\ref{theorem_stability} for the data
$\jj_h^{\edge} \in \RT_p(\TTe) \cap \HH_{\GeN}(\ddiv,\ome)$ with $\div \jj_h^{\edge} = 0$ and
$\ch_h \in \NN_p(\TTe)$. Let $\vv^\star_p \in {\boldsymbol V}_p(\TTe)$ and
$\vv^\star \in \boldsymbol V(\TTe)$ be any minimizers of
Proposition~\ref{prop_stability_patch} for the data
\begin{equation} \label{eq_data_eq}
\rr_\TT \eq \jj_h^{\edge}-\curl \ch_h, \qquad
\rr_F \eq - \ppi^\ttau_F \left (\jump{\ch_h}_F\right ) \quad \forall F \in \FF,
\end{equation}
where $\FF$ is specified in Table~\ref{Tab:type_of_calF}.} Then
{\color{black}
\begin{equation} \label{eq_eq}
\hh_p^\star - \ch_h =  \vv^\star_p, \quad
\hh^\star - \ch_h = \vv^\star.
\end{equation}
In the converse direction, for given data $\rr_\TT$ and $\rr_\FF$} in Proposition~\ref{prop_stability_patch},
{\color{black}
compatible as per Definition~\ref{definition_compatible_data}, taking any $\ch_h \in \NN_p(\TTe)$
such that $- \ppi^\ttau_F \left (\jump{\ch_h}_F\right ) = \rr_F$ for all $F \in \FF$ and
$\jj_h^{\edge} \eq \rr_\TT + \curl \ch_h$} gives minimizers of Theorem~\ref{theorem_stability} such that{\color{black} ~\eqref{eq_eq} holds true}.
\end{lemma}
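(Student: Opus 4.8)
The plan is to derive the entire statement from a single structural fact: the affine shift $\hh \mapsto \vv \eq \hh - \ch_h$ is a norm-preserving bijection between the two minimizing sets of Theorem~\ref{theorem_stability} and the two sets $\boldsymbol V(\TTe)$, $\boldsymbol V_p(\TTe)$ of Proposition~\ref{prop_stability_patch} once the latter are built from the data~\eqref{eq_data_eq}. Granting this, the inequality~\eqref{eq_stab} and the inequality~\eqref{eq_stab_shift} become literally the same statement with $\Cste = C_{\mathrm{st},\edge}$; minimizers are carried to minimizers, which is exactly~\eqref{eq_eq}; and the existence and uniqueness of the minimizers in Theorem~\ref{theorem_stability} transfer from those of Proposition~\ref{prop_stability_patch} (nonemptiness of the discrete set being Lemma~\ref{lem_EU}). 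The converse half of the lemma is then obtained by reversing the construction of the data.

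First I would check that the data $\rr_\TT \eq \jj_h^{\edge} - \curl \ch_h$ and $\rr_F \eq -\ppi^\ttau_F(\jump{\ch_h}_F)$ are admissible and compatible in the sense of Definition~\ref{definition_compatible_data}. Membership $\rr_\TT \in \RT_p(\TTe)$ holds because $\curl \ch_h$ is elementwise polynomial of degree $\le p$, hence in $\RT_p(\TTe)$; $\rr_\FF \in \NN_p^\ttau(\Gamma_\FF)$ holds by~\eqref{eq_tr_Te} with the witness field $-\ch_h$; condition~\eqref{eq_comp_a} is $\div \jj_h^{\edge} = 0$ together with $\div \curl = 0$; condition~\eqref{eq_comp_b} follows because $\jj_h^{\edge}$ has continuous normal trace across internal faces and zero normal trace on the $\GeN$-faces (the only external faces in $\FF$), while $\scurl_F(\rr_F) = -\jump{\curl \ch_h}_F \cdot \nn_F$ by~\eqref{eq_scurl_curl_patch} with witness $-\ch_h$, so $\jump{\rr_\TT}_F \cdot \nn_F = -\jump{\curl \ch_h}_F \cdot \nn_F = \scurl_F(\rr_F)$; finally the loop conditions~\eqref{eq_comp_c}--\eqref{eq_comp_d} come from the telescoping of the differences $\jump{\ch_h}_{F_j} = \ch_h|_{K_{j+1}} - \ch_h|_{K_j}$ around the ring of tetrahedra (using $K_{n+1} = K_1$ in the interior case, and the two end tetrahedra in the Neumann case, which leave precisely the term $\rr_{F_n}|_\edge \cdot \ttau_\edge$).

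Next I would set up the bijection. For $\hh$ in a minimizing set of Theorem~\ref{theorem_stability}, put $\vv \eq \hh - \ch_h$; then $\vv \in \HH(\ccurl,\TTe)$ and $\curl \vv = \jj_h^{\edge} - \curl \ch_h = \rr_\TT$. The decisive ingredient is the characterization of $\HH_{\GeN}(\ccurl,\ome)$ inside $\HH(\ccurl,\TTe)$ as the set of broken fields whose weak tangential jumps (Definition~\ref{definition_jumps}) across the internal faces vanish and whose tangential trace on $\GeN$ vanishes --- which, for the face set $\FF$ of Table~\ref{Tab:type_of_calF} and with the convention $\jump{\cdot}_F = \cdot|_F$ on external faces, is exactly ``$\jump{\hh}^\ttau_\FF = \bzero$''. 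Since $\ch_h \in \NN_p(\TTe) \subset \HH^1(\TTe)$ has weak tangential jumps equal to its strong ones, the linearity of the defining identity~\eqref{eq_prescription_tang_jump_e} in $(\vv,\rr_\FF)$ yields $\jump{\vv}^\ttau_\FF = \rr_\FF$, hence $\vv \in \boldsymbol V(\TTe)$; moreover $\vv \in \NN_p(\TTe)$ iff $\hh \in \NN_p(\TTe)$, giving $\vv \in \boldsymbol V_p(\TTe)$ in the discrete case. The inverse map $\vv \mapsto \vv + \ch_h$ is treated identically, using $\curl(\vv + \ch_h) = \rr_\TT + \curl \ch_h = \jj_h^{\edge}$ and the reverse translation of jumps (and here the ``gluing'' half of the structural characterization puts $\vv + \ch_h$ into $\HH_{\GeN}(\ccurl,\ome)$). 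Because $\|\vv\|_\ome = \|\hh - \ch_h\|_\ome$, the two pairs of minimization problems coincide term by term, which proves the ``only if'' implication, the identities~\eqref{eq_eq}, and the transfer of existence/uniqueness. For the ``if'' implication, given compatible $\rr_\TT, \rr_\FF$, choose $\ch_h \in \NN_p(\TTe)$ with $-\ppi^\ttau_F(\jump{\ch_h}_F) = \rr_F$ for all $F \in \FF$ (possible since $\rr_\FF \in \NN_p^\ttau(\Gamma_\FF)$) and put $\jj_h^{\edge} \eq \rr_\TT + \curl \ch_h$; compatibility~\eqref{eq_comp_a}--\eqref{eq_comp_b} then shows $\jj_h^{\edge} \in \RT_p(\TTe) \cap \HH_{\GeN}(\ddiv,\ome)$ with $\div \jj_h^{\edge} = 0$, and applying the first half to these data closes the argument.

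The one genuinely delicate point is the structural characterization invoked in the bijection: that vanishing of the weak tangential jumps of Definition~\ref{definition_jumps} glues a broken $\HH(\ccurl,\TTe)$ field into an element of $\HH_{\GeN}(\ccurl,\ome)$. The forward implication is a face-by-face integration by parts over $\TTe$, in which the internal-face terms cancel by tangential-trace continuity, the external $\GeD$-face terms drop because the test functions of $\HH^1_{\ttau,\FF^{\mathrm c}}(\TTe)$ are tangentially flat there, and the external $\GeN$-face terms vanish by the boundary condition. The reverse ``gluing'' implication is the $\HH(\ccurl)$-analogue of the standard fact that matching traces glue broken $H^1$ functions, and it is where the argument needs the most care; everything else is bookkeeping with the definitions.
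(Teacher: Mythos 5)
Your proposal is correct and follows essentially the same route as the paper: an affine shift by $\ch_h$ turning the two constrained minimizations of Theorem~\ref{theorem_stability} into those of Proposition~\ref{prop_stability_patch}, with the substantive work being the verification that the shifted data $\rr_\TT=\jj_h^{\edge}-\curl\ch_h$ and $\rr_F=-\ppi^\ttau_F(\jump{\ch_h}_F)$ are compatible in the sense of Definition~\ref{definition_compatible_data} (divergence-free, normal-jump/surface-curl matching via the $\HH(\ddiv)$-conformity of $\jj_h^{\edge}$, and the telescoping edge conditions). The paper treats the norm-preserving bijection between the minimizing sets as immediate, whereas you make explicit the underlying gluing characterization of $\HH_{\GeN}(\ccurl,\ome)$ via vanishing weak tangential jumps; this is a correct and harmless elaboration, not a different argument.
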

}

\begin{proof} \revision{The proof follows} via a shift by the datum $\ch_h$.
In order to show~\eqref{eq_eq} in the forward direction \revision{(the converse
direction is actually easier)}, we merely need to show that
$\rr_\TT$ and $\rr_\FF$ prescribed by~\eqref{eq_data_eq} are compatible data
as per Definition~\ref{definition_compatible_data}.
Indeed, to start with, since $\jj_h^{\edge}, \curl \ch_h \in \RT_p(\TTe)$,
we have $\rr_\TT \in \RT_p(\TTe)$. In addition, since $\div \jj_h^{\edge} = 0$
from~\eqref{eq_definition_jj_h},
\begin{equation*}
\div \rr_\TT = \div \jj_h^{\edge} - \div (\curl \ch_h) = 0,
\end{equation*}
which is~\eqref{eq_comp_a}.
Then, for all $j\in\{1:n\}$ if the patch is of interior type and for all $j\in\{1:n-1\}$
if the patch is of boundary type, we have
\begin{equation*}
\rr_{F_{j}}
=
\ppi^\ttau_{F_j} (\ch_h|_{K_j})
-
\ppi^\ttau_{F_j} (\ch_h|_{K_{j+1}}),
\end{equation*}
and therefore, recalling the definition~\eqref{eq_scurl_curl_el} of the surface curl,
we infer that
\begin{align*}
\scurl_{F_{j}} (\rr_{F_{j}})
&=
\scurl_{F_{j}} (\ppi^\ttau_{F_j} (\ch_h|_{K_j}))
-
\scurl_{F_{j}} (\ppi^\ttau_{F_j} (\ch_h|_{K_{j+1}}))
\\
&=
(\curl \ch_h)|_{K_{j}}|_{F_j} \cdot \nn_{F_{j}}
-
(\curl \ch_h)|_{K_{j+1}}|_{F_j} \cdot \nn_{F_{j}}
\\
&=
- \jump{\curl \ch_h}_{F_{j}} \cdot \nn_{F_{j}}.
\end{align*}
On the other hand, since $\jj_h^{\edge} \in \HH(\ddiv,\ome)$,
we have $\jump{\jj_h^{\edge}}_{F_{j}} \cdot \nn_{F_{j}} = 0$,
and therefore
\begin{align*}
\jump{\rr_\TT}_{F_{j}} \cdot \nn_{F_{j}}
&=
- \jump{\curl \ch_h}_{F_{j}} \cdot \nn_{F_{j}}
=
\scurl_{F_{j}} (\rr_{F_{j}}).
\end{align*}
Since a similar reasoning applies on the face $F_0$
if the patch is of Neumann or mixed boundary type and on the face $F_n$
if the patch is of Neumann boundary type, \eqref{eq_comp_b} is established.

It remains to show that $\rr_\FF$ satisfies the edge compatibility
condition~\eqref{eq_comp_c} or~\eqref{eq_comp_d} if the patch is interior or
Neumann boundary type, respectively. Let us treat the first case
(the other case is treated similarly). Owing to the convention $K_{n+1}=K_1$,
we infer that
\begin{equation*}
\sum_{j\in\{1:n\}} \rr_{F_{j}}|_\edge \cdot \ttau_\edge
=
\sum_{j\in\{1:n\}} (\ch_h|_{K_j} - \ch_h|_{K_{j+1}})|_\edge \cdot \ttau_\edge
=
0,
\end{equation*}
which establishes~\eqref{eq_comp_c}. We have thus shown that $\rr_\TT$ and $\rr_\FF$
are compatible data as per Definition~\ref{definition_compatible_data}.
\end{proof}

\subsection{Proof of Proposition~\ref{prop_stability_patch}}
\label{sec:proof_stability_patch}

The proof of Proposition~\ref{prop_stability_patch} is performed in two steps.
First we prove that ${\boldsymbol V}_p(\TTe)$ is nonempty by
providing a generic elementwise construction of a field in ${\boldsymbol V}_p(\TTe)$;
this in particular implies the existence and uniqueness of all minimizers in~\eqref{eq_stab_shift}.
Then we prove the inequality~\eqref{eq_stab_shift} by using
one such field $\xxi_p^\star \in \boldsymbol V_p(\TTe)$.
Throughout this section, if $A,B \geq 0$ are two real numbers, we employ
the notation $A \lesssim B$ to say that there exists a constant $C$ that only depends on the
shape-regularity parameter $\kappa_\edge$ of the patch $\TTe$ defined in~\eqref{eq_regularities},
such that $A \leq CB$. We note that in particular we have $n = |\TTe| \lesssim 1$
owing to the shape-regularity of the mesh $\TT_h$.

\subsubsection{Generic elementwise construction of fields in ${\boldsymbol V}_p(\TTe)$}

The generic construction of fields in ${\boldsymbol V}_p(\TTe)$ is based
on a loop over the mesh elements composing the edge patch $\TTe$. This loop is
enumerated by means of an index $j\in\{1:n\}$.

\begin{definition}[Element spaces]\label{def_spaces_elm}
For each $j\in\{1:n\}$, let $\emptyset\subseteq \FF_j \subseteq \FF_{K_j}$ be a
(sub)set of the faces of $K_j$. Let $\rr_{K_j} \in \RT_p(K_j)$ with $\div \rr_{K_j} = 0$,
and if $\emptyset\ne \FF_j$, let $\trr_{\FF_j}^j \in \NN_p^\ttau(\Gamma_{\FF_j})$
in the sense of~\eqref{eq_tr_K} be given data. We define
\bse\begin{align}
\boldsymbol V(K_j) & \eq \left \{
\vv \in \HH(\ccurl,K_j) \; \left | \;
\begin{array}{l}
\curl \vv = \rr_{K_j}
\\
\vv|^\ttau_{\FF_j} = \trr^j_{\FF_j},
\end{array}
\right .
\right \}, \label{eq_VKj} \\
\boldsymbol V_p(K_j) & \eq \boldsymbol V(K_j) \cap \NN_p(K_j). \label{eq_VqKj}
\end{align}\ese
\end{definition}

In what follows, we are only concerned with the cases where $\FF_j$ is either empty
or composed of one or two faces of $K_j$. In this situation, the subspace ${\boldsymbol V}_p(K_j)$
is nonempty if and only if
\bse \label{eq:cond_comp}
\begin{alignat}{2}
&\scurl_{F} (\trr_{F}^{j}) = \rr_{K_j} \cdot \nn_F&\quad&\forall F\in\FF_j,\label{eq:cond_comp1} \\
&\trr_{F_+}^{j}|_{\edge} \cdot \ttau_\edge = \trr_{F_-}^{j}|_{\edge} \cdot \ttau_\edge&\quad&\text{if $\FF_j=\{F_+,F_-\}$ with $\edge=F_+\cap F_-$}, \label{eq:cond_comp2}
\end{alignat}
\ese
where $\nn_F$ is the unit normal orienting $F$ used in the definition of the surface curl
(see~\eqref{eq_scurl_curl_el}). The second condition~\eqref{eq:cond_comp2} is relevant only if
$|\FF_j|=2$.


\begin{lemma}[Generic elementwise construction]\label{lem_EU}
Let $\edge\in\EE_h$, let $\TTe$ be the edge patch associated with $\edge$, and let
the set of faces $\FF$ be specified in Table~\ref{Tab:type_of_calF}.
Let $\rr_\TT \in \RT_p(\TTe)$ and $\rr_\FF \in \NN_p^\ttau(\Gamma_\FF)$ be compatible data
as per Definition~\ref{definition_compatible_data}. Define
$\rr_{K_j}\eq \rr_{\TT}|_{K_j}$ for all $j\in\{1:n\}$. Then,
the following inductive procedure yields a sequence of nonempty spaces
$({\boldsymbol V}_p(K_j))_{j\in\{1:n\}}$ in the sense of Definition~\ref{def_spaces_elm},
as well as a sequence of fields $(\xxi_p^j)_{j\in\{1:n\}}$
such that $\xxi_p^j\in {\boldsymbol V}_p(K_j)$ for all $j\in\{1:n\}$. Moreover, the field
$\xxi_p$ prescribed by $\xxi_p|_{K_j} \eq \xxi_p^j$ for all $j\in\{1:n\}$ belongs to the space
$\boldsymbol V_p(\TTe)$ of Definition~\ref{def_spaces}:
\\
\bse
{\bf 1)} First element ($j=1$): Set $\FF_1 \eq \emptyset$ if the patch is of interior or
Dirichlet boundary type and set $\FF_1 \eq \{F_0\}$ if the patch is of Neumann or mixed
boundary type together with
\begin{equation}
\trr^1_{F_0} \eq  \rr_{F_0}.\label{eq_F_0}
\end{equation}
Define the space ${\boldsymbol V}_p(K_1)$ according to~\eqref{eq_VqKj} and pick any
$\xxi_p^1\in {\boldsymbol V}_p(K_1)$. \\
{\bf 2)} Middle elements ($j\in\{2:n-1\}$): Set $\FF_j\eq \{F_{j-1}\}$ together with
\begin{equation}
\trr^j_{F_{j-1}} \eq \ppi^\ttau_{F_{j-1}} (\xxi^{j-1}_p) + \rr_{F_{j-1}}, \label{eq_F_j}
\end{equation}
with $\xxi^{j-1}_p$ obtained in the previous step of the procedure.
Define the space ${\boldsymbol V}_p(K_j)$ according to~\eqref{eq_VqKj} and pick any
$\xxi_p^j\in {\boldsymbol V}_p(K_j)$. \\
{\bf 3)} Last element ($j=n$): Set $\FF_{n} \eq \{F_{n-1}\}$ if the patch is of Dirichlet or mixed boundary type and set $\FF_n \eq \{ F_{n-1}, F_n \}$ if the patch is of interior or Neumann boundary type and define $\trr^n_{\FF_n}$ as follows:
For the four cases of the patch,
\begin{equation}
\trr^n_{F_{n-1}} \eq \ppi^\ttau_{F_{n-1}} (\xxi^{n-1}_p) + \rr_{F_{n-1}}, \label{eq_F_nn}
\end{equation}
with $\xxi^{n-1}_p$ obtained in the previous step of the procedure, and in the two cases where $\FF_n$ also contains $F_n$:
\begin{alignat}{2}
\trr_{F_n}^n & \eq \ppi^\ttau_{F_n} (\xxi_p^1) - \rr_{F_n} &\qquad&\text{interior type}, \label{eq_F_int}\\
\trr_{F_n}^n & \eq \rr_{F_n} &\qquad&\text{Neumann boundary type}. \label{eq_F_n}
\end{alignat}
Define the space ${\boldsymbol V}_p(K_n)$ according to~\eqref{eq_VqKj} and pick any
$\xxi_p^n\in {\boldsymbol V}_p(K_n)$. \\
\ese
\end{lemma}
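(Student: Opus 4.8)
The plan is to carry out the announced induction over $j\in\{1:n\}$, at each step verifying that the element data $(\rr_{K_j},\trr^j_{\FF_j})$ built by the procedure satisfy the compatibility conditions~\eqref{eq:cond_comp} — so that Proposition~\ref{prop_stability_tetrahedra} guarantees ${\boldsymbol V}_p(K_j)\neq\emptyset$ and the choice of $\xxi_p^j\in{\boldsymbol V}_p(K_j)$ is legitimate — and then to check that the glued field, $\xxi_p|_{K_j}\eq\xxi_p^j$, belongs to ${\boldsymbol V}_p(\TTe)$, which in particular establishes the nonemptiness of that space. The ingredients used over and over are that $\curl\xxi_p^i=\rr_{K_i}=\rr_\TT|_{K_i}$ for the fields produced at the previous steps, that $\rr_{K_j}$ is divergence-free by~\eqref{eq_comp_a}, and that the jump condition~\eqref{eq_comp_b} holds on the internal faces $\FFei\subseteq\FF$; each $\trr^j_{\FF_j}$ is moreover readily seen to lie in $\NN_p^\ttau(\Gamma_{\FF_j})$, the only nontrivial requirement being the edge-matching of its two components in the case $|\FF_j|=2$, which is exactly~\eqref{eq:cond_comp2}.

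First I would dispatch the surface-curl condition~\eqref{eq:cond_comp1}. For $j=1$ it is vacuous when $\FF_1=\emptyset$ (interior or Dirichlet patch); when $\FF_1=\{F_0\}$ (Neumann or mixed patch), since $F_0\in\FFee$ is external one has $\jump{\rr_\TT}_{F_0}=\rr_{K_1}|_{F_0}$, so~\eqref{eq_comp_b} gives $\scurl_{F_0}(\trr^1_{F_0})=\scurl_{F_0}(\rr_{F_0})=\rr_{K_1}|_{F_0}\cdot\nn_{F_0}$. For $j\in\{2:n\}$ and the incoming internal face $F_{j-1}$, I would combine the defining formula (see~\eqref{eq_F_j} and~\eqref{eq_F_nn}) $\trr^j_{F_{j-1}}=\ppi^\ttau_{F_{j-1}}(\xxi_p^{j-1})+\rr_{F_{j-1}}$ with $\scurl_{F_{j-1}}(\ppi^\ttau_{F_{j-1}}(\xxi_p^{j-1}))=\rr_{K_{j-1}}|_{F_{j-1}}\cdot\nn_{F_{j-1}}$ (since $\curl\xxi_p^{j-1}=\rr_{K_{j-1}}$) and $\scurl_{F_{j-1}}(\rr_{F_{j-1}})=\jump{\rr_\TT}_{F_{j-1}}\cdot\nn_{F_{j-1}}=(\rr_{K_j}-\rr_{K_{j-1}})|_{F_{j-1}}\cdot\nn_{F_{j-1}}$ from~\eqref{eq_comp_b} to obtain $\scurl_{F_{j-1}}(\trr^j_{F_{j-1}})=\rr_{K_j}|_{F_{j-1}}\cdot\nn_{F_{j-1}}$, which is~\eqref{eq:cond_comp1}. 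The analogous identity on the closing face $F_n$ when $F_n\in\FF_n$ (interior or Neumann patch, $j=n$) follows the same way, using $\curl\xxi_p^1=\rr_{K_1}$ together with $\jump{\rr_\TT}_{F_n}=(\rr_{K_1}-\rr_{K_n})|_{F_n}$ for the interior case (and~\eqref{eq_F_int}) and the externality of $F_n$ for the Neumann case (and~\eqref{eq_F_n}).

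The delicate point, which I expect to be the main obstacle, is the edge-compatibility condition~\eqref{eq:cond_comp2}: it is active only for $j=n$ in the interior and Neumann patches, where $\FF_n=\{F_{n-1},F_n\}$ with $F_{n-1}\cap F_n=\edge$, and it is exactly where the loop around the edge has to be closed. Restricting the recursions~\eqref{eq_F_j} and~\eqref{eq_F_nn} to $\edge$ and pairing with $\ttau_\edge$ — using $\edge\subset F_{j-1}$, so that $\ppi^\ttau_{F_{j-1}}(\ww)|_\edge\cdot\ttau_\edge=\ww|_\edge\cdot\ttau_\edge$ — one telescopes down to
\[
\trr^n_{F_{n-1}}|_\edge\cdot\ttau_\edge=\xxi_p^1|_\edge\cdot\ttau_\edge+\sum_{\ell\in\{1:n-1\}}\rr_{F_\ell}|_\edge\cdot\ttau_\edge.
\]
In the interior case, $\FF_1=\emptyset$ and $\trr^n_{F_n}=\ppi^\ttau_{F_n}(\xxi_p^1)-\rr_{F_n}$ by~\eqref{eq_F_int}, so~\eqref{eq:cond_comp2} reduces to $\sum_{j\in\{1:n\}}\rr_{F_j}|_\edge\cdot\ttau_\edge=0$, which is precisely~\eqref{eq_comp_c} (recall $F_n=F_0$). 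In the Neumann case, $\trr^1_{F_0}=\rr_{F_0}$ forces $\xxi_p^1|_\edge\cdot\ttau_\edge=\rr_{F_0}|_\edge\cdot\ttau_\edge$ while $\trr^n_{F_n}=\rr_{F_n}$ by~\eqref{eq_F_n}, so~\eqref{eq:cond_comp2} reduces to $\sum_{j\in\{0:n-1\}}\rr_{F_j}|_\edge\cdot\ttau_\edge=\rr_{F_n}|_\edge\cdot\ttau_\edge$, which is~\eqref{eq_comp_d}. Hence in all four patch types the conditions~\eqref{eq:cond_comp} hold and each $\xxi_p^j$ may be selected.

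Finally, for the glued field I would observe that $\xxi_p\in\NN_p(\TTe)\subset\HH^1(\TTe)$ and that $\curl\xxi_p=\rr_\TT$ holds elementwise by construction, so by the last sentence of Definition~\ref{definition_jumps} it remains only to check $\ppi^\ttau_F(\jump{\xxi_p}_F)=\rr_F$ for every $F\in\FF$. On an internal face $F_{j-1}$, $j\in\{2:n\}$, this reads $\ppi^\ttau_{F_{j-1}}(\xxi_p^j)-\ppi^\ttau_{F_{j-1}}(\xxi_p^{j-1})=\trr^j_{F_{j-1}}-\ppi^\ttau_{F_{j-1}}(\xxi_p^{j-1})=\rr_{F_{j-1}}$ by~\eqref{eq_F_j} and~\eqref{eq_F_nn}; on the external face $F_0$ (which belongs to $\FF$ for the Neumann and mixed patches) it reads $\ppi^\ttau_{F_0}(\xxi_p^1)=\trr^1_{F_0}=\rr_{F_0}$; and on $F_n$ (which belongs to $\FF$ for the interior and Neumann patches) it reads $\ppi^\ttau_{F_n}(\xxi_p^n)=\trr^n_{F_n}=\rr_{F_n}$ when $F_n$ is external (Neumann) and $\ppi^\ttau_{F_n}(\xxi_p^1)-\ppi^\ttau_{F_n}(\xxi_p^n)=\ppi^\ttau_{F_n}(\xxi_p^1)-\trr^n_{F_n}=\rr_{F_n}$ by~\eqref{eq_F_int} when $F_n=F_0$ is internal (interior). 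Therefore $\jump{\xxi_p}^\ttau_\FF=\rr_\FF$ and $\xxi_p\in{\boldsymbol V}_p(\TTe)$. Beyond the loop-closing telescoping, the remaining effort is the systematic bookkeeping of the four patch types and of the internal-versus-external normal conventions.
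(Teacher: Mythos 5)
Your proposal is correct and follows essentially the same route as the paper's proof: an induction over the elements verifying the surface-curl condition via~\eqref{eq_comp_b}, the telescoping along $\edge$ to reduce the closing edge-compatibility to~\eqref{eq_comp_c} or~\eqref{eq_comp_d}, and the final face-by-face check that the glued field realizes the prescribed tangential jumps. All sign and orientation conventions are handled consistently with the paper.
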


\begin{proof}
We first show that $\xxi^j_p$ is well-defined in ${\boldsymbol V}_p(K_j)$ for all
$j\in\{1:n\}$. We do so by verifying that $\trr^{j}_{\FF_{j}} \in \NN_p^\ttau(\Gamma_{\FF_{j}})$
(recall~\eqref{eq_tr_K}) and that the conditions~\eqref{eq:cond_comp} hold true for all
$j\in\{1:n\}$. Then, we show that $\xxi_p \in \boldsymbol V_p(\TTe)$.

{\bf (1)} First element ($j=1$). If the patch is of interior or Dirichlet boundary type,
there is nothing to verify since $\FF_1$ is empty. If the patch is of Neumann or mixed
boundary type, $\FF_1 = \{F_0\}$ and we need to verify that
$\trr^1_{F_0} \in \NN_p^\ttau(\Gamma_{\{F_0\}})$ and that
$\scurl_{F_0} (\trr^{1}_{F_0})=\rr_{K_1} \cdot \nn_{F_0}$, see~\eqref{eq:cond_comp1}.
Since $\trr^1_{\FF_1}=\rr_{F_0}\in \NN_p^\ttau(\Gamma_{\{F_0\}})$ by assumption, the first
requirement is met. The second one follows from
$\rr_{K_1} \cdot \nn_{F_0} = \jump{\rr_\TT}_{F_0} \cdot \nn_{F_0} =
\scurl_{F_0}(\rr_{F_0}) = \scurl_{F_0} (\trr^{1}_{F_0})$ owing to~\eqref{eq_comp_b}.

{\bf (2)} Middle elements ($j\in\{2:n-1\}$). Since $\FF_j = \{F_{j-1}\}$, we need to show
that $\trr^j_{F_{j-1}} \in \NN_p^\ttau(\Gamma_{\{F_{j-1}\}})$ and that
$\scurl_{F_{j-1}} (\trr^{j}_{F_{j-1}})=\rr_{K_j} \cdot \nn_{F_{j-1}}$.
The first requirement follows from the definition~\eqref{eq_F_j} of $\trr^j_{F_{j-1}}$.
To verify the second requirement, we recall the
definition~\eqref{eq_scurl_curl_el} of the surface curl
and use the curl constraint from~\eqref{eq_VKj} to infer that
\begin{align*}
\scurl_{F_{j-1}} (\trr_{F_{j-1}}^{j})
&=
\scurl_{F_{j-1}} (\ppi^\ttau_{F_{j-1}} (\xxi_p^{j-1})) + \scurl_{F_{j-1}} (\rr_{F_{j-1}})
\\
&=
\left (\curl \xxi_p^{j-1}\right )|_{F_{j-1}} \cdot \nn_{F_{j-1}} + \scurl_{F_{j-1}} (\rr_{F_{j-1}})
\\
&=
\rr_{K_{j-1}} \cdot \nn_{F_{j-1}} + \scurl_{F_{j-1}} (\rr_{F_{j-1}}).
\end{align*}
By virtue of assumption~\eqref{eq_comp_b}, it follows that
\begin{align*}
\rr_{K_{j}} \cdot \nn_{F_{j-1}} - \scurl_{F_{j-1}} (\trr_{F_{j-1}}^{j})
&=
\rr_{K_{j}} \cdot \nn_{F_{j-1}} - \rr_{K_{j-1}} \cdot \nn_{F_{j-1}} - \scurl_{F_{j-1}} (\rr_{F_{j-1}})
\\
&=
\jump{\rr_\TT}_{F_{j-1}} \cdot \nn_{F_{j-1}} - \scurl_{F_{j-1}} (\rr_{F_{j-1}}) = 0.
\end{align*}

{\bf (3)} Last element ($j=n$). We distinguish two cases.

{\bf (3a)} Patch of Dirichlet or mixed boundary type. In this case, $\FF_n = \{F_{n-1}\}$
and the reasoning is identical to the case of a middle element.

{\bf (3b)} Patch of interior or Neumann boundary type. In this case, $\FF_n = \{F_{n-1},F_n\}$.
First, the prescriptions~\eqref{eq_F_nn}--\eqref{eq_F_int}--\eqref{eq_F_n} imply
that $\trr^n_{\FF_n} \in \NN^\ttau_p(\Gamma_{\FF_n})$ in the sense of~\eqref{eq_tr_K}.
It remains to show~\eqref{eq:cond_comp1}, i.e.
\begin{equation} \label{eq:cond_comp_n1}
\scurl_{F_{n-1}}(\trr^n_{F_{n-1}})=\rr_{K_n}\cdot\nn_{F_{n-1}}, \qquad
\scurl_{F_{n}}(\trr^n_{F_{n}})=\rr_{K_n}\cdot\nn_{F_{n}},
\end{equation}
and, since $\FF_n$ is composed of two faces, we also need to show the edge compatibility
condition~\eqref{eq:cond_comp2}, i.e.
\begin{equation} \label{eq:cond_comp_n2}
\trr^n_{F_{n-1}}|_\edge \cdot \ttau_\edge = \trr^n_{F_{n}}|_\edge \cdot \ttau_\edge.
\end{equation}
The proof of the first identity in~\eqref{eq:cond_comp_n1} is as above, so we now detail
the proof of the second identity in~\eqref{eq:cond_comp_n1} and the proof
of~\eqref{eq:cond_comp_n2}.

{\bf (3b-I)} Let us consider the case of a patch of interior type. To prove the
second identity in~\eqref{eq:cond_comp_n1}, we use definition~\eqref{eq_scurl_curl_el}
of the surface curl together with the curl constraint in~\eqref{eq_VKj} and infer that
\begin{align*}
\scurl_{F_n} (\trr_{F_n}^n)
&=
\scurl_{F_n} (\ppi^\ttau_{F_n}(\xxi_p^1) - \rr_{F_n})
\\
&=
\curl \xxi_p^1 \cdot \nn_{F_n} - \scurl_{F_n} (\rr_{F_n})
\\
&=
\rr_{K_1} \cdot \nn_{F_n} - \scurl_{F_n} (\rr_{F_n}).
\end{align*}
This gives
\begin{align*}
\rr_{K_n} \cdot \nn_{F_n} - \scurl_F (\trr_{F_n}^n)
&=
(\rr_{K_n} - \rr_{K_1}) \cdot \nn_{F_n} + \scurl_F (\rr_{F_n})
\\
&=
-\jump{\rr}_{F_n} \cdot \nn_{F_n} + \scurl_F (\rr_{F_n})
=
0,
\end{align*}
where the last equality follows from~\eqref{eq_comp_b}.
This proves the expected identity on the curl.

Let us now prove~\eqref{eq:cond_comp_n2}.
For all $j\in\{1:n-1\}$, since $\xxi_p^j \in \NN_p(K_j)$, its tangential traces satisfy the
edge compatibility condition
\begin{equation} \label{eq:edge_compatibility_xxi}
\left . \left (\ppi^\ttau_{F_{j-1}} (\xxi_p^j)\right ) \right |_\edge \cdot \ttau_\edge
=
\left .\left (\ppi^\ttau_{F_{j}} (\xxi_p^j)\right ) \right |_\edge \cdot \ttau_\edge.
\end{equation}
Moreover, for all $j\in\{1:n-2\}$, we have $F_j\in\FF_{j+1}$, so that by~\eqref{eq_VKj}
and the definition~\eqref{eq_F_j} of $\trr^{j+1}_{F_{j}}$, we have
\begin{equation*}
\ppi^\ttau_{F_{j}} (\xxi_p^{j+1}) = \trr^{j+1}_{F_{j}}
=
\ppi^\ttau_{F_{j}} (\xxi_p^j) +  \rr_{F_{j}},
\end{equation*}
and, therefore, using~\eqref{eq:edge_compatibility_xxi} yields
\begin{equation*}
\left . \left (\ppi^\ttau_{F_{j-1}} (\xxi_p^j)\right ) \right |_\edge \cdot \ttau_\edge
=
\left . \left (\ppi^\ttau_{F_{j}} (\xxi_p^{j+1})\right ) \right |_\edge \cdot \ttau_\edge -
\rr_{F_{j}}|_\edge \cdot \ttau_\edge.
\end{equation*}
Summing this identity for all $j\in\{1:n-2\}$ leads to
\[
\left . \left (\ppi^\ttau_{F_{0}} (\xxi_p^1)\right ) \right |_\edge \cdot \ttau_\edge
=
\left . \left (\ppi^\ttau_{F_{n-2}} (\xxi_p^{n-1})\right ) \right |_\edge \cdot \ttau_\edge
- \sum_{j\in\{1:n-2\}} \rr_{F_{j}}|_\edge \cdot \ttau_\edge.
\]

In addition, using again the edge compatibility condition~\eqref{eq:edge_compatibility_xxi}
for $j=n-1$ and the definition~\eqref{eq_F_nn} of $\trr_{F_{n-1}}^n$ leads to
\[
\left . \left (\ppi^\ttau_{F_{n-2}} (\xxi_p^{n-1})\right ) \right |_\edge \cdot \ttau_\edge
= \trr_{F_{n-1}}^n|_\edge \cdot \ttau_\edge - \rr_{F_{n-1}}|_\edge \cdot \ttau_\edge.
\]
Summing the above two identities gives
\begin{equation}
\label{tmp_induction_edge2}
\left . \left (\ppi^\ttau_{F_{0}} (\xxi_p^1)\right ) \right |_\edge \cdot \ttau_\edge
=
\trr_{F_{n-1}}^n|_\edge \cdot \ttau_\edge -
\sum_{j\in\{1:n-1\}} \rr_{F_{j}}|_\edge \cdot \ttau_\edge.
\end{equation}
Since $F_0=F_n$ for a patch of interior type and $\trr_{F_n}^n = \ppi^\ttau_{F_n} (\xxi_p^1) - \rr_{F_n}$ owing to~\eqref{eq_F_int}, the identity~\eqref{tmp_induction_edge2} gives
\begin{align*}
\trr_{F_n}^n |_\edge\cdot \ttau_\edge
&=
\left . \left (\ppi^\ttau_{F_{0}} (\xxi_p^1)\right ) \right |_\edge \cdot \ttau_\edge
-
\left (
\rr_{F_n} |_\edge \cdot \ttau_\edge
\right )
\\
&= \trr_{F_{n-1}}^n|_\edge \cdot \ttau_\edge
-
\sum_{j\in\{1:n-1\}} \rr_{F_{j}}|_\edge \cdot \ttau_\edge
-
\left (
\rr_{F_n} |_\edge \cdot \ttau_\edge
\right )
\\
&=
\trr^n_{F_{n-1}}|_\edge \cdot \ttau_\edge
-
\sum_{j\in\{1:n\}} \rr_{F_{j}}|_\edge \cdot \ttau_\edge = \trr^n_{F_{n-1}}|_\edge \cdot \ttau_\edge,
\end{align*}
where we used the edge compatibility condition~\eqref{eq_comp_c} satisfied by $\rr_\FF$ in the last equality. This proves~\eqref{eq:cond_comp_n2} in the interior case.

{\bf (3b-N)} Let us finally consider a patch of Neumann boundary type. The
second identity in~\eqref{eq:cond_comp_n1} follows directly from~\eqref{eq_comp_b}
and~\eqref{eq_F_n}. Let us now prove~\eqref{eq:cond_comp_n2}.
The identity~\eqref{tmp_induction_edge2} still holds true. Using that
$(\ppi^\ttau_{F_{0}} (\xxi_p^1)) |_\edge \cdot \ttau_\edge = \rr_{F_0}|_\edge \cdot \ttau_\edge$,
this identity is rewritten as
\[
\trr_{F_{n-1}}^n|_\edge \cdot \ttau_\edge = \sum_{j\in\{0:n-1\}} \rr_{F_{j}}|_\edge \cdot \ttau_\edge = \rr_{F_n}|_\edge \cdot \ttau_\edge,
\]
where the last equality follows from the edge compatibility condition~\eqref{eq_comp_d} satisfied by
$\rr_\FF$. But since $\trr_{F_n}^n = \rr_{F_n}$ owing to~\eqref{eq_F_n}, this again
proves~\eqref{eq:cond_comp_n2}.

{\bf (4)} It remains to show that $\xxi_p \in \boldsymbol V_p(\TTe)$ as per
Definition~\ref{def_spaces}. By construction, we have
$\ppi^\ttau_{F_{j}}(\xxi_p|_{K_{j+1}}) - \ppi^\ttau_{F_{j}}(\xxi_p|_{K_j}) = \rr_{F_{j}}$
for all $j\in\{1:n-1\}$,
$\ppi^\ttau_{F_{n}}(\xxi_p|_{K_{0}}) - \ppi^\ttau_{F_{n}}(\xxi_p|_{K_n}) = \rr_{F_{n}}$
if the patch is of interior type, $\ppi^\ttau_{F_{0}}(\xxi_p|_{K_{1}}) = \rr_{F_{0}}$
if the patch is of Neumann or mixed boundary type, and
$\ppi^\ttau_{F_{n}}(\xxi_p|_{K_{n}}) = \rr_{F_{n}}$ if the patch is of Neumann type.
This proves that $\ppi^\ttau_F(\jump{\xxi_p}_F) = \rr_F$ for all $F \in \FF$, i.e.,
$\jump{\xxi_p}^\ttau_\FF = \rr_\FF$ in the sense of Definition~\ref{definition_jumps}.
\end{proof}

\subsubsection{The actual proof}

We are now ready to prove Proposition~\ref{prop_stability_patch}.

\begin{proof}[Proof of Proposition~\ref{prop_stability_patch}]
Owing to Lemma~\ref{lem_EU}, the fields
\begin{equation} \label{eq_min_K}
\xxi_p^{\star j} \eq \argmin{\vv_p \in \boldsymbol V_p(K_j)} \|\vv_p\|_{K_j},
\qquad j\in\{1:n\},
\end{equation}
are uniquely defined in ${\boldsymbol V}_p(K_j)$, and the field $\xxi_p^{\star}$ such that
$\xxi_p^{\star}|_{K_j} \eq \xxi_p^{\star j}$ for all $j\in\{1:n\}$ satisfies
$\xxi_p^\star \in \boldsymbol V_p(\TTe)$. Since the minimizing sets in~\eqref{eq_stab_shift}
are nonempty (they all contain $\xxi_p^{\star}$), both the discrete and the continuous minimizers
are uniquely defined owing to standard convexity arguments.  Let us set
\begin{equation*}
\vv^\star \eq \argmin{\vv \in \boldsymbol V(\TTe)} \|\vv\|_{\ome},
\qquad \vv^\star_j \eq \vv^\star|_{K_j}, \quad j\in\{1:n\}.
\end{equation*}
To prove Proposition~\ref{prop_stability_patch}, it is enough to show that
\begin{equation} \label{eq res}
\|\xxi_p^\star\|_{\ome} \lesssim \|\vv^\star\|_{\ome}.
\end{equation}
Owing to Proposition~\ref{prop_stability_tetrahedra} applied with $K\eq K_j$ and $\FF\eq\FF_j$
for all $j\in\{1:n\}$, we have
\begin{equation} \label{eq_ineq_K}
\|\xxi_p^\star\|_{K_j} \lesssim
\min_{\zzeta \in \boldsymbol V(K_j)} \|\zzeta\|_{K_j},
\end{equation}
where $\boldsymbol V(K_{j})$ is defined in~\eqref{eq_VKj}.
Therefore, recalling that $|\TTe|\lesssim 1$,
\eqref{eq res} will be proved if for all $j\in\{1:n\}$, we can construct a field
$\zzeta_j \in \boldsymbol V(K_j)$ such that
$\|\zzeta_j\|_{K_j} \revision{\lesssim} \|\vv^\star\|_{\ome}$.
To do so, we proceed once again by induction.

{\bf (1)} First element ($j=1$).
Since $\vv^\star_1 \in \boldsymbol V(K_1)$, the claim is established with
$\zzeta_1\eq \vv^\star_1$ which trivially satisfies
$\|\zzeta_1\|_{K_1} = \|\vv^\star\|_{K_1} \leq \|\vv^\star\|_{\ome}$.

{\bf (2)} Middle elements ($j\in\{2:n-1\}$).
We proceed by induction. Given $\zzeta_{j-1}\in {\boldsymbol V}(K_{j-1})$ such that
$\|\zzeta_{j-1}\|_{K_{j-1}} \lesssim \|\vv^\star\|_{\ome}$,
let us construct a suitable $\zzeta_{j}\in {\boldsymbol V}(K_{j})$
such that $\|\zzeta_j\|_{K_j} \lesssim \|\vv^\star\|_{\ome}$.
We consider the affine geometric mapping $\TTT_{j-1,j}: K_{j-1} \to K_{j}$
that leaves the three vertices $\adown$, $\aaa_{j-1}$, and $\aup$
(and consequently the face $F_{j-1}$) invariant, whereas $\TTT_{j-1,j}(\aaa_{j-2}) = \aaa_{j}$.
We denote by $\ppsi^{\mathrm{c}}_{j-1,j} \eq \ppsi^{\mathrm{c}}_{\TTT_{j-1,j}}$
the associated Piola mapping, see Section~\ref{sec_Piola}.
Let us define the function $\zzeta_{j} \in \HH(\ccurl,K_{j})$ by
\begin{equation} \label{eq_zeta_j}
\zzeta_{j} \eq \vv^\star_{j} - \epsilon_{j-1,j} \ppsi^{\mathrm{c}}_{j-1,j}(\xxi_p^{\star j-1} - \vv^\star_{j-1}),
\end{equation}
where $\epsilon_{j-1,j} \eq \sign \left (\det \JJJ_{\TTT_{j-1,j}}\right)$
\revision{(notice that here $\epsilon_{j-1,j}=-1$)}.
Using the triangle inequality, the $L^2$-stability of the Piola mapping
(see~\eqref{eq_stab_piola_L2}), inequality~\eqref{eq_ineq_K}, and the induction hypothesis,
we have
\begin{equation} \label{eq_bound} \begin{split}
\|\zzeta_{j}\|_{K_j}
&\leq
\|\vv^\star\|_{K_j} + \|\ppsi^{\mathrm{c}}_{j-1,j}(\xxi_p^{\star j-1} - \vv^\star_{j-1})\|_{K_j}
\\
&\lesssim
\|\vv^\star\|_{K_j} + \|\xxi_p^\star - \vv^\star\|_{K_{j-1}}
\\
&\leq
\|\vv^\star\|_{K_j} + \|\xxi_p^\star\|_{K_{j-1}} + \|\vv^\star\|_{K_{j-1}}\\
&\lesssim
\|\vv^\star\|_{K_j} + \|\zzeta_{j-1}\|_{K_{j-1}} + \|\vv^\star\|_{K_{j-1}}
\lesssim
\|\vv^\star\|_{\ome}.
\end{split}\end{equation}
Thus it remains to establish that $\zzeta_{j} \in \boldsymbol V(K_{j})$
in the sense of Definition~\ref{def_spaces_elm}, i.e., we need to show that
$\curl \zzeta_{j}=\rr_{K_{j}}$ and $\zzeta_{j}|^\ttau_{\FF_{j}} = \trr^{j}_{\FF_{j}}$.
Recalling the curl constraints in~\eqref{eq_VTe} and~\eqref{eq_VKj} which yield
$\curl \xxi_p^\star = \curl \vv^\star = \rr_{\TT}$ and using~\eqref{eq_piola_commute}, we have
\begin{equation}
\label{tmp_curl_vv_middle}
\begin{split}
\curl \zzeta_{j}
&=
\curl \vv^\star_{j} - \epsilon_{j-1,j}\curl \ppsi^{\mathrm{c}}_{j-1,j}(\xxi_p^{\star j-1} - \vv^\star_{j-1})
\\
&=
\rr_{K_{j}} - \epsilon_{j-1,j} \ppsi^{\mathrm{d}}_{j-1,j}\left (\curl (\xxi_p^{\star j-1} - \vv^\star_{j-1})\right )
=
\rr_{K_{j}},
\end{split} \end{equation}
which proves the expected condition on the curl of $\zzeta_j$.

It remains to verify the weak tangential trace condition
$\zzeta_{j}|^\ttau_{\FF_{j}} = \trr^{j}_{\FF_{j}}$
as per Definition~\ref{definition_partial_trace}.
To this purpose, let $\pphi \in \HH^1_{\ttau,\FF_{j}^{\mathrm{c}}}(K_{j})$
and define $\tpphi$ by
\begin{equation}\label{eq_thpi}
\tpphi|_{K_{j}} \eq \pphi \quad \tpphi|_{K_{j-1}} \eq (\ppsi_{j-1,j}^{\mathrm{c}})^{-1}(\pphi),
\quad
\tpphi|_{K_l} = \boldsymbol 0 \quad \forall l \in\{1:n\}\setminus\{ j-1,j\}.
\end{equation}
These definitions imply that
$\tpphi \in \HH(\ccurl,\ome) \cap \HH^1_{\ttau,\FF^{\mathrm{c}}}(\TTe)$
(recall~\eqref{eq_Htpatch}) with
\begin{equation*}
\left . \left (\tpphi|_{K_{j-1}} \right ) \right |_{F_{j-1}} \times \nn_{F_{j-1}}
=
\left . \left (\tpphi|_{K_{j}} \right ) \right |_{F_{j-1}} \times \nn_{F_{j-1}}
=
\pphi|_{F_{j-1}} \times \nn_{F_{j-1}},
\end{equation*}
as well as
\begin{equation*}
\tpphi|_F \times \nn_F = \boldsymbol 0 \quad \forall F \in \FFe \setminus \{F_{j-1}\}.
\end{equation*}
(Note that $\tpphi|_F\times\nn_F$ is uniquely defined by assumption.)
Recalling definition~\eqref{eq_zeta_j} of $\zzeta_j$ and that
$\curl \zzeta_j = \rr_{K_{j}} = \curl\vv^\star_{j}$, see~\eqref{tmp_curl_vv_middle}, we have
\begin{align*}
&
(\curl \zzeta_j,\pphi)_{K_{j}}
-
(\zzeta_j,\curl \pphi)_{K_{j}}
\\
&=
(\curl \vv^\star,\pphi)_{K_{j}}
-
(\vv^\star,\curl \pphi)_{K_{j}}
+
\epsilon_{j-1,j}
(\ppsi^{\mathrm{c}}_{j-1,j} (\xxi_p^{\star j-1} - \vv^\star_{j-1}),\curl \pphi)_{K_{j}}
\\
&=
(\curl \vv^\star,\tpphi)_{K_{j}}
-
(\vv^\star,\curl \tpphi)_{K_{j}}
+ (\xxi_p^\star - \vv^\star,\curl \tpphi )_{K_{j-1}},
\end{align*}
where we used the definition of $\tpphi$, properties~\eqref{eq_piola_adjoint},
\eqref{eq_piola_commute} of the Piola mapping, and the definition of $\epsilon_{j-1,j}$
to infer that
\begin{align*}
\epsilon_{j-1,j}
(\ppsi^{\mathrm{c}}_{j-1,j} (\xxi_p^{\star j-1} - \vv^\star_{j-1}),\curl \pphi)_{K_{j}}
&=
\epsilon_{j-1,j}^2
(\xxi_p^\star - \vv^\star,\curl \left ((\ppsi^{\mathrm{c}}_{j-1,j})^{-1}\pphi|_{K_j} \right ))_{K_{j-1}}.
\\
&=
(\xxi_p^\star - \vv^\star,\curl \tpphi )_{K_{j-1}}.
\end{align*}
Since $\curl \xxi_p^\star = \rr_{\TT} = \curl \vv^\star$ and
$\tpphi = \boldsymbol 0$ outside $K_{j-1} \cup K_{j}$, this gives
\begin{align}
\label{tmp_middle_trace0}
&
(\curl \zzeta_j,\pphi)_{K_{j}}
-
(\zzeta_j,\curl \pphi)_{K_{j}}
\\
\nonumber
&=
(\curl \vv^\star,\tpphi)_{K_{j}}
-
(\vv^\star,\curl \tpphi)_{K_{j}}
+
(\xxi_p^\star - \vv^\star,\curl \tpphi )_{K_{j-1}}
+
(\curl (\vv^\star-\xxi_p^\star),\tpphi)_{K_{j-1}}
\\
\nonumber
&= \sum_{K \in \TTe}
\left \{
(\curl \vv^\star,\tpphi)_K - (\vv^\star,\curl \tpphi)_K
\right \}
-
\left (
(\curl \xxi_p^\star,\tpphi)_{K_{j-1}} - (\xxi_p^\star,\curl \tpphi)_{K_{j-1}}
\right ).
\end{align}

Since $\vv^\star \in \boldsymbol V(\TTe)$, $\tpphi \in \HH^1_{\ttau,\FF^{\mathrm{c}}}(\TTe)$,
and $\jump{\vv^\star}^\ttau_\FF=\rr_\FF$, we have from Definitions~\ref{definition_jumps}
and~\ref{def_spaces}
\begin{align}
\label{tmp_middle_trace1}
\sum_{K \in \TTe}
\left \{
(\curl \vv^\star,\tpphi)_K - (\vv^\star,\curl \tpphi)_K
\right \}
&=
\sum_{F \in \FF} (\rr_F,\tpphi \times \nn_F)_{F}
\\
\nonumber
&=
(\rr_{F_{j-1}},\pphi \times\nn_{F_{j-1}})_{F_{j-1}},
\end{align}
where in the last equality, we employed the definition~\eqref{eq_thpi} of $\tpphi$.
On the other hand, since $\xxi^\star_p|_{K_{j-1}},\tpphi|_{K_{j-1}} \in \HH^1(K_{j-1})$,
we can employ the pointwise definition of the trace and infer that
\begin{align}
\label{tmp_middle_trace2}
(\curl \xxi_p^\star,\tpphi)_{K_{j-1}} - (\xxi_p^{\star},\curl \tpphi)_{K_{j-1}}
&=
(\ppi^\ttau_{F_{j-1}} (\xxi_p^{\star j-1}),\tpphi|_{K_{j-1}} \times \nn_{K_{j-1}})_{F_{j-1}}
\\
\nonumber
&=
-(\ppi^\ttau_{F_{j-1}} (\xxi_p^{\star j-1}),\pphi \times \nn_{F_{j-1}})_{F_{j-1}},
\end{align}
where we used that $\nn_{K_{j-1}} = -\nn_{F_{j-1}}$.
Then, plugging~\eqref{tmp_middle_trace1} and~\eqref{tmp_middle_trace2}
into~\eqref{tmp_middle_trace0} and employing~\eqref{eq_F_j} and $\nn_{K_{j}} = \nn_{F_{j-1}}$,
we obtain
\begin{align*}
(\curl \zzeta_j,\pphi)_{K_{j}}
-
(\zzeta_j,\curl \pphi)_{K_{j}}
&=
(\rr_{F_{j-1}},\pphi \times\nn_{F_{j-1}})_{F_{j-1}}
+(\ppi^\ttau_{F_{j-1}} (\xxi_p^{\star j-1}),\pphi \times \nn_{F_{j-1}})_{F_{j-1}}
\\
&=
(\rr_{F_{j-1}}+\ppi^\ttau_{F_{j-1}} (\xxi_p^{\star j-1}),\pphi \times \nn_{F_{j-1}})_{F_{j-1}}
\\
&=
(\trr^{j}_{F_{j-1}},\pphi \times \nn_{K_{j}})_{F_{j-1}}.
\end{align*}
Since $\FF_j\eq \{F_{j-1}\}$, this shows that $\zzeta_j$ satisfies the weak tangential
trace condition in $\boldsymbol V(K_{j})$ by virtue of Definition~\ref{definition_partial_trace}.

{\bf (3)} Last element ($j=n$). We need to distinguish the type of patch.

{\bf (3a)} Patch of Dirichlet or mixed boundary type. In this case, we can employ the same argument as
for the middle elements since $\FF_n=\{F_{n-1}\}$ is composed of only one face.

{\bf (3b)} Patch of interior type. Owing to the induction hypothesis,
we have $\|\zzeta_j\|_{K_j} \lesssim \|\vv^\star\|_{\ome}$ for all $j\in\{1:n-1\}$.
Let us first assume that there is an even number of tetrahedra in the patch $\TTe$, as in
Figure~\ref{fig_patch}, left. The case where this number is odd will be discussed below.
We build a geometric mapping $\TTT_{j,n}:K_j\to K_n$ for all $j\in\{1:n-1\}$ as follows:
$\TTT_{j,n}$ leaves the edge $\edge$ pointwise invariant, $\TTT_{j,n}(\aaa_{j-1})\eq \aaa_{n}$,
$\TTT_{j,n}(\aaa_j)\eq \aaa_{n-1}$ if $(n-j)$ is odd, and $\TTT_{j,n}(\aaa_{j})\eq \aaa_{n}$,
$\TTT_{j,n}(\aaa_{j-1})\eq \aaa_{n-1}$ if $(n-j)$ is even. Since $n$ is by assumption even,
one readily sees that $\TTT_{j,n}(F_j)=\TTT_{j+1,n}(F_j)$ with $F_j=K_j\cap K_{j+1}$ for all
$j\in\{1;n-2\}$.

We define $\zzeta_n \in \HH(\ccurl,K_n)$ by setting
\begin{equation}
\label{eq_v_def}
\zzeta_n \eq \vv^\star_n -
\sum_{j\in\{1:n-1\}} \epsilon_{j,n} \ppsi^{\mathrm{c}}_{j,n}(\xxi_p^{\star j}-\vv^\star_j),
\end{equation}
where $\epsilon_{j,n} \eq \sign(\det \JJJ_{\TTT_{j,n}})$ and $\ppsi^{\mathrm{c}}_{j,n}$ is the
Piola mapping associated with $\TTT_{j,n}$.
Reasoning as above in~\eqref{eq_bound} shows that
\begin{equation*}
\|\zzeta_n\|_{K_n} \lesssim \|\vv^\star\|_{\ome}.
\end{equation*}

It now remains to establish that $\zzeta_{n} \in \boldsymbol V(K_{n})$
as per Definition~\ref{def_spaces_elm},
i.e. $\curl \zzeta_{n}=\rr_{K_{n}}$ and $\zzeta_{n}|^\ttau_{\FF_{n}} = \trr^{n}_{\FF_{n}}$
with $\FF_n\eq\{F_{n-1},F_n\}$. Since $\curl \xxi_p^\star = \rr_{\TT} = \curl \vv^\star$,
using~\eqref{eq_piola_commute} leads to $\curl \zzeta_n = \curl \vv^\star_n = \rr_{K_n}$
as above in~\eqref{tmp_curl_vv_middle}, which proves the expected condition on the curl of
$\zzeta_n$. It remains to verify the weak tangential trace condition as per
Definition~\ref{definition_partial_trace}. To this purpose, let
$\pphi \in \HH_{\ttau,\FF_n^{\mathrm{c}}}^1(K_n)$ and define $\tpphi$ by
\begin{equation}
\label{eq_ttet}
\tpphi|_{K_n} \eq \pphi, \qquad
\tpphi|_{K_j} \eq \left (\ppsi_{j,n}^{\mathrm{c}}\right )^{-1}(\pphi)
\quad \forall j\in\{1:n-1\}.
\end{equation}
As $\pphi \in \HH_{\ttau,\FF_n^{\mathrm{c}}}^1(K_n)$, its trace is defined in
a strong sense, and the preservation of tangential traces by Piola mappings shows
that $\tpphi \in \HH^1_{\ttau,\FF^{\mathrm{c}}}(\TTe)$ in the sense of~\eqref{eq_Htpatch}.
Then, using $\curl \zzeta_n = \curl \vv^\star_n$ and~\eqref{eq_v_def}, we have
\begin{align*}
&
(\curl \zzeta_n,\pphi)_{K_n}
-
(\zzeta_n,\curl \pphi)_{K_n}
\\
&=
(\curl \vv^\star,\tpphi)_{K_n}
-
(\vv^\star,\curl \tpphi)_{K_n}
+
\sum_{j\in\{1:n-1\}} \epsilon_{j,n}
(\ppsi^{\mathrm{c}}_{j,n}(\xxi^{\star j}_p - \vv^\star_{j}),\curl \pphi)_{K_n},
\end{align*}
where we used the definition of $\tpphi$ for the first two terms on the right-hand side.
Moreover, using~\eqref{eq_piola_adjoint} and~\eqref{eq_piola_commute} for all $j\in\{1:n-1\}$,
we have
\begin{align*}
\epsilon_{j,n}
(\ppsi^{\mathrm{c}}_{j,n}(\xxi^{\star j}_p - \vv^\star_{j}),\curl \pphi)_{K_n}
&=
\epsilon_{j,n}^2
(\xxi^\star_p - \vv^\star,\curl ((\ppsi^{\mathrm{c}}_{j,n})^{-1}(\pphi|_{K_n})))_{K_j}
\\
&=
(\xxi^\star_p - \vv^\star,\curl \tpphi)_{K_j}
\\
&=
(\xxi^\star_p - \vv^\star,\curl \tpphi)_{K_j} - (\curl (\xxi^\star_p - \vv^\star),\tpphi)_{K_j},
\end{align*}
since $\curl \xxi^\star_p = \rr_\TT = \curl \vv^\star$.
It follows that
\begin{align*}
& (\curl \zzeta_n,\pphi)_{K_n}
-
(\zzeta_n,\curl \pphi)_{K_n}\\
&=
\sum_{j\in\{1:n\}}
\left \{
(\curl \vv^\star,\tpphi)_{K_j}
-
(\vv^\star,\curl \tpphi)_{K_j}
\right \}
-
\sum_{j\in\{1:n-1\}}
\left \{
(\curl \xxi_p^\star,\tpphi)_{K_j}
-
(\xxi_p^\star,\curl \tpphi)_{K_j}
\right \}
\\
&=
(\curl \xxi_p^\star,\tpphi)_{K_n}
-
(\xxi_p^\star,\curl \tpphi)_{K_n}
+
\sum_{j\in\{1:n\}}
\left \{
(\curl \vv^\star,\tpphi)_{K_j}
-
(\vv^\star,\curl \tpphi)_{K_j}
\right \}
\\
& \qquad -
\sum_{j\in\{1:n\}}
\left \{
(\curl \xxi_p^\star,\tpphi)_{K_j}
-
(\xxi_p^\star,\curl \tpphi)_{K_j}
\right \}
\\
&=
(\curl \xxi_p^\star,\tpphi)_{K_n}
-
(\xxi_p^\star,\curl \tpphi)_{K_n}
=
(\curl \xxi_p^\star,\pphi)_{K_n}
-
(\xxi_p^\star,\curl \pphi)_{K_n}\\
&=  \sum_{F\in\FF_n} (\trr^{n}_{F},\pphi \times \nn_{K_n})_{F},
\end{align*}
where we employed the fact that, since both $\xxi_p^\star,\vv^\star \in \VV(\TTe)$,
Definition~\ref{definition_jumps} gives
\begin{align*}
\sum_{j\in\{1:n\}}
\left \{
(\curl \vv^\star,\tpphi)_{K_j}
-
(\vv^\star,\curl \tpphi)_{K_j}
\right \}
&=
\sum_{F \in \FF} (\rr_F,\tpphi \times \nn_F)_{F}
\\
&=
\sum_{j\in\{1:n\}}
\left \{
(\curl \xxi_p^\star,\tpphi)_{K_j}
-
(\xxi_p^\star,\curl \tpphi)_{K_j}
\right \}.
\end{align*}
Thus $\zzeta_n|^\ttau_{\FF_n} = \trr^n_{\FF_n}$ in the sense of
Definition~\ref{definition_partial_trace}. This establishes the weak tangential
trace condition on $\zzeta_n$ when $n$ is even.

If $n$ is odd, one can proceed as
in~\cite[Section~6.3]{Ern_Voh_p_rob_3D_20}. For the purpose of the proof only,
one tetrahedron different from $K_n$ is subdivided into two subtetrahedra as
in~\cite[Lemma~B.2]{Ern_Voh_p_rob_3D_20}. Then, the above construction of $\zzeta_n$
can be applied on the newly created patch which has an even number of elements, and one
verifies as above that $\zzeta_n\in {\boldsymbol V}(K_n)$.

{\bf (3c)} Patch of Neumann boundary type. In this case, a similar argument as for
a patch of interior type applies, and we omit the proof for the sake of brevity.

\end{proof}

\begin{remark}[Quasi-optimality of $\xxi_p^\star$] \label{rem_sweep_brok}
Let $\xxi_p^\star\in{\boldsymbol V}_p(\TTe)$ be defined in the above proof
(see in particular~\eqref{eq_min_K}).
Since $\|\vv^\star\|_{\ome} \le \min_{\vv_p\in {\boldsymbol V}_p(\TTe)}\|\vv_p\|_{\ome}$,
inequality~\eqref{eq res} implies that
$\|\xxi_p^\star\|_{\ome} \lesssim
\min_{\vv_p\in {\boldsymbol V}_p(\TTe)}\|\vv_p\|_{\ome}$
(note that the converse inequality is trivial with constant one).
This elementwise minimizer is the one used in \revision{Theorem~\ref{thm_sweep} and in}
the simplified a posteriori error estimator~\eqref{eq_definition_estimator_sweep_2}.
\end{remark}

\bibliographystyle{amsplain}
\bibliography{biblio}

\appendix

\section{Poincar\'e-like inequality using the curl of divergence-free fields}
\label{appendix_weber}

\begin{theorem}[Constant in the Poincar\'e-like inequality~\eqref{eq_local_poincare_vectorial}]
For every edge $\edge \in \EE_h$, the constant
\begin{equation}
\CPVe \eq \frac{1}{h_\ome} \sup_{\substack{
\vv \in \HH_{\GeD}(\ccurl,\ome) \cap \HH_{\GeN}(\ddiv,\ome)
\\
\div\vv=0\\
\|\curl \vv\|_{\ome} = 1
}}
\|\vv\|_{\ome}
\end{equation}
only depends on the shape-regularity parameter $\kappa_\edge$ of the edge patch $\TT^\edge$.
\end{theorem}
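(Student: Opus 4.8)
The plan is to combine a scaling reduction, a compactness argument over shape-regular edge patches, and the classical Friedrichs--Weber inequality on a \emph{fixed} Lipschitz domain. First, note that $\CPVe$ is invariant under rigid motions and dilations of the patch: dilating $\ome$ by $\lambda>0$ multiplies $h_\ome$, $\|\vv\|_\ome$ and $\|\curl\vv\|_\ome$ by $\lambda$, $\lambda^{3/2}$ and $\lambda^{1/2}$ respectively, so the ratio defining $\CPVe$ is unchanged. Hence $\CPVe$ depends only on the similarity class of the labelled patch $(\ome,\TTe,\GeD,\GeN)$, and it suffices to bound $\CPVe$ over the family $\mathcal P(\kappa)$ of edge patches with $\kappa_\edge\le\kappa$, normalized so that $h_\ome=1$ and $\adown=\bzero$. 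For such a patch, $\rho_K\ge\rho_\edge=h_\ome/\kappa_\edge\ge\kappa^{-1}$ for every $K\in\TTe$ by~\eqref{eq_patch_not}--\eqref{eq_regularities}, so each tetrahedron contains a ball of radius $(2\kappa)^{-1}$ and lies in $B(\bzero,1)$; therefore $n\eq|\TTe|$ is bounded by some $N(\kappa)$, and for each fixed $n\le N(\kappa)$ and each of the four patch types the corresponding part of $\mathcal P(\kappa)$ --- described by the finitely many vertex coordinates --- is compact. Each patch of $\mathcal P(\kappa)$ is simply connected with connected Dirichlet part, so its cohomology space is trivial and the Friedrichs--Weber inequality holds on it with a finite constant (see~\cite{Weber:80} or~\cite[Proposition~7.4]{Fer_Gil_Maxw_BC_97}); thus $\CPVe$ is finite on each individual patch, and it remains only to preclude a blow-up along sequences.

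I would argue by contradiction: suppose $\ome_k\in\mathcal P(\kappa)$, all of one combinatorial type with $n$ elements, satisfy $\CPVe(\ome_k)\to\infty$. After extracting a subsequence the vertices converge, so $\ome_k\to\ome_\infty\in\mathcal P(\kappa)$, and for $k$ large there is a piecewise-affine homeomorphism $\Phi_k\colon\ome_\infty\to\ome_k$ mapping vertices to vertices (hence respecting the $\GeD/\GeN$ labelling), with $\Phi_k\to\mathrm{id}$, $\Phi_k^{-1}\to\mathrm{id}$ and their piecewise-constant Jacobians converging to the identity. Pick $\vv_k\in\HH_{\GeD}(\ccurl,\ome_k)\cap\HH_{\GeN}(\ddiv,\ome_k)$ with $\div\vv_k=0$, $\|\vv_k\|_{\ome_k}=1$ and $\|\curl\vv_k\|_{\ome_k}\to0$. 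Pull $\vv_k$ back to $\ome_\infty$ by the covariant Piola map associated with $\Phi_k^{-1}$, obtaining $\hvv_k$: this map preserves $\HH_{\GeD}(\ccurl,\cdot)$ and intertwines $\curl$ with the contravariant Piola map (see~\eqref{eq_piola_commute}), so $\hvv_k\in\HH_{\GeD}(\ccurl,\ome_\infty)$, $\|\hvv_k\|_{\ome_\infty}\to1$ and $\|\curl\hvv_k\|_{\ome_\infty}\lesssim\|\curl\vv_k\|_{\ome_k}\to0$.

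On the fixed domain $\ome_\infty$ I would then take the Helmholtz decomposition $\hvv_k=\grad q_k+\ww_k$ with $q_k\in H^1_{\GeD}(\ome_\infty)$ solving $(\grad q_k,\grad\phi)_{\ome_\infty}=(\hvv_k,\grad\phi)_{\ome_\infty}$ for all $\phi\in H^1_{\GeD}(\ome_\infty)$. Then $\ww_k\in\HH_{\GeD}(\ccurl,\ome_\infty)\cap\HH_{\GeN}(\ddiv,\ome_\infty)$ with $\div\ww_k=0$ and $\curl\ww_k=\curl\hvv_k$, so the Friedrichs--Weber inequality on $\ome_\infty$ gives $\|\ww_k\|_{\ome_\infty}\to0$. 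Pushing the decomposition back to $\ome_k$ by the covariant Piola map of $\Phi_k$, and using that it sends gradients to gradients, I obtain $\vv_k=\grad\tq_k+\rr_k$ with $\tq_k\eq q_k\circ\Phi_k^{-1}\in H^1_{\GeD}(\ome_k)$ and $\|\rr_k\|_{\ome_k}\lesssim\|\ww_k\|_{\ome_\infty}\to0$.

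The contradiction comes from the divergence constraint, which enters only at this last stage: since $\div\vv_k=0$ and $\vv_k\in\HH_{\GeN}(\ddiv,\ome_k)$, testing against $\tq_k\in H^1_{\GeD}(\ome_k)$ gives $(\vv_k,\grad\tq_k)_{\ome_k}=0$, whence $\|\grad\tq_k\|_{\ome_k}^2=-(\rr_k,\grad\tq_k)_{\ome_k}$ and so $\|\grad\tq_k\|_{\ome_k}\le\|\rr_k\|_{\ome_k}\to0$; then $1=\|\vv_k\|_{\ome_k}\le\|\grad\tq_k\|_{\ome_k}+\|\rr_k\|_{\ome_k}\to0$, which is absurd. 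This shows $\sup_{\mathcal P(\kappa)}\CPVe<\infty$ for every $\kappa$, so $\CPVe$ is bounded by a function of $\kappa_\edge$ alone. The main obstacle, and the reason the proof is organized this way, is that a single affine transfer cannot preserve both the $\HH(\ccurl)$ structure (covariant Piola) and the divergence-free constraint (contravariant Piola); I therefore transfer only the curl/gradient structure to the fixed domain and recover the information carried by $\div\vv_k=0$ afterwards, back on the original patch.
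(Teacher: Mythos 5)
Your argument is correct in its analytic core and reproduces exactly the key mechanism of the paper's proof: a covariant Piola transfer to a fixed domain (which destroys the divergence constraint and the normal trace condition), a gradient correction there so that the Friedrichs--Weber inequality applies, and the recovery of the lost information back on the original patch through the orthogonality $(\vv,\grad \tq)_{\ome}=0$ furnished by $\div\vv=0$, $\vv\in\HH_{\GeN}(\ddiv,\ome)$ and $\tq\in H^1_{\GeD}(\ome)$ --- your closing remark about why no single Piola map can preserve both structures is precisely the content of Step (1) of the paper's proof. Where you genuinely diverge is in how the reduction to finitely many configurations is organized. The paper fixes, for each admissible number of tetrahedra and each labelling of boundary faces, one reference patch $(\widehat\TT,\widehat\Gamma)$, maps it onto the actual patch by a piecewise affine bilipschitz map, and bounds the resulting factor $\|\JJJ\|^2/|\det\JJJ|$ explicitly by $\kappa_\edge^2 h_{\ome}$ times reference-patch quantities, which yields a semi-explicit bound $\CPVe\lesssim\kappa_\edge^2\max C_{\rm PFW}(\widehat\omega,\widehat\Gamma)$. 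You instead normalize by scaling and run a compactness/contradiction argument over the space of vertex configurations; this avoids exhibiting reference patches and Jacobian estimates, but it is purely qualitative and, crucially, it requires taking limits of patches, which the paper never does.

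That limit is where I would press you. The constraints $\rho_K\ge\kappa^{-1}$ and $h_{\ome}=1$ prevent individual tetrahedra from degenerating and bound $n$, but they do not make the boundary-type part of $\mathcal P(\kappa)$ closed: the dihedral angles at $\edge$ of a boundary patch sum to at most $2\pi$, and nothing in $\kappa_\edge$ forbids this sum from tending to $2\pi$, in which case the two exterior faces $F_0$ and $F_n$ containing $\edge$ coincide in the limit. The limit configuration is then not a valid edge patch: the open set $\bigl(\bigcup_j\overline{K_j}\bigr)^\circ$ is a full ring around $\edge$, whereas the domain carrying the boundary labels is that ring slit along $F_0=F_n$, which is not Lipschitz. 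Your contradiction argument needs a Friedrichs--Weber inequality on this limit object with the inherited mixed boundary conditions, and your assertion ``$\ome_k\to\ome_\infty\in\mathcal P(\kappa)$'' fails for such sequences. The gap is repairable --- the slit ring is bilipschitz to an honest boundary patch (unfold the slit), so the Maxwell compactness property and hence the inequality survive, and the homeomorphisms $\Phi_k$ and the Piola transfer still make sense there --- but the repair is not in your text, and without it the extraction step is incomplete for the three boundary patch types.
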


\begin{proof}
We proceed in two steps.

(1) Let us first establish a result regarding the transformation of this type
of constant by a bilipschitz mapping.
Consider a Lipschitz and simply connected domain $U$ with its boundary
$\partial U$ partitioned into two disjoint relatively open subdomains
$\Gamma$ and $\Gamma_{\mathrm{c}}$. Let $\TTT: U \to \tU$ be a bilipschitz mapping
with Jacobian matrix $\JJJ$, and let $\tG \eq \TTT(\Gamma)$ and
$\tG_{\mathrm{c}} \eq \TTT(\Gamma_{\mathrm{c}})$. Let us set
\begin{equation*}
C_{\rm PFW}(U,\Gamma)
\eq
\sup_{\substack{
\uu \in \HH_{\Gamma}(\ccurl,U) \cap \HH_{\Gamma_{\mathrm{c}}}(\ddiv,U)
\\
\div \uu = 0\\
\|\curl \uu\|_U = 1
}}
\|\uu\|_U,
\qquad
C_{\rm PFW}(\tU,\tG)
\eq
\sup_{\substack{
\tuu \in \HH_{\tG}(\ccurl,\tU) \cap \HH_{\tG_{\mathrm{c}}}(\ddiv,\tU)
\\
\div \tuu = 0\\ \|\curl \tuu\|_{\tU} = 1
}}
\|\tuu\|_{\tU}.
\end{equation*}
Remark that both constants are well-defined real numbers owing to
\cite[Proposition 7.4]{Fer_Gil_Maxw_BC_97}.
Then, we have
\begin{equation} \label{eq:transfo_C_PFW}
C_{\rm PFW}(U,\Gamma) \leq
\|\phi\|_{L^\infty(U)}^2
C_{\rm PFW}(\tU,\tG),
\end{equation}
with $\phi(\xx)\eq |\det \JJJ(\xx)|^{-\frac12} \|\JJJ(\xx)\|$ for all $\xx\in U$.
To show~\eqref{eq:transfo_C_PFW}, let
$\uu \in \HH_{\Gamma}(\ccurl,U) \cap \HH_{\Gamma_{\mathrm{c}}}(\ddiv,U)$ be such that $\div\uu=0$.
Let us set $\tuu \eq (\ppsi^{\mathrm{c}}_U)^{-1}(\uu)$ where
$\ppsi^{\mathrm{c}}_U:\HH_{\tG}(\ccurl,\tU)\to \HH_{\Gamma}(\ccurl,U)$ is the
covariant Piola mapping. Since $\tuu$ is not necessarily divergence-free and does not have necessarily a zero normal trace on $\tG_{\mathrm{c}}$,
we define (up to a constant) the function $\tq \in H^1_{\tG}(\tU)$ such that
\begin{equation*}
(\grad \tq,\grad {\widetilde w})_{\tU} = (\tuu,\grad {\widetilde w})_{\tU}
\qquad
\forall {\widetilde w} \in H^1_{\tG}(\tU).
\end{equation*}
Then, the field
$\tvv \eq \tuu-\grad \tq$ is in $\HH_{\tG}(\ccurl,\tU) \cap \HH_{\tG_{\mathrm{c}}}(\ddiv,\tU)$
and is divergence-free. Therefore, we have
\begin{equation*}
\|\tvv\|_{\tU}
\leq
C_{\rm PFW}(\tU,\tG) \|\curl \tvv\|_{\tU}
=
C_{\rm PFW}(\tU,\tG) \|\curl \tuu\|_{\tU}.
\end{equation*}
Let us set
\begin{equation*}
\vv \eq \ppsi^{\mathrm{c}}_U(\tvv) = \uu - \ppsi^{\mathrm{c}}_U(\grad \tq)
= \uu - \grad q,
\end{equation*}
with $q\eq\psi^{\mathrm{g}}_U(\tq)\eq \tq\circ \TTT$. Since
$\uu \in \HH_{\Gamma_{\mathrm{c}}}(\ddiv,U)$ with $\div\uu=0$ and $q \in H^1_{\Gamma}(U)$,
there holds $(\uu,\grad q)_{U} = 0$, which implies that
$\|\uu\|_U \leq \|\uu-\grad q\|_U=\|\vv\|_U$. Moreover, proceeding as in the proof of
\cite[Lemma 11.7]{Ern_Guermond_FEs_I_21} shows that
\begin{equation*}
\|\vv\|_U
\leq \|\phi\|_{L^\infty(U)} \|\tvv\|_{\tU}.
\end{equation*}
Combining the above bounds shows that
\begin{equation*}
\|\uu\|_U \le \|\phi\|_{L^\infty(U)} C_{\rm PFW}(\tU,\tG) \|\curl \tuu\|_{\tU}.
\end{equation*}
Finally, we have $\curl \tuu=(\ppsi^{\mathrm{d}}_U)^{-1}(\curl \uu)$ where $\ppsi^{\mathrm{d}}_U$
is the contravariant Piola mapping, and proceeding as in the proof of
\cite[Lemma 11.7]{Ern_Guermond_FEs_I_21} shows that
\begin{equation*}
\|\curl\tuu\|_{\tU}
\leq \|\phi\|_{L^\infty(U)} \|\curl\uu\|_{U}.
\end{equation*}
Altogether, this yields
\begin{equation*}
\|\uu\|_U \le \|\phi\|_{L^\infty(U)}^2 C_{\rm PFW}(\tU,\tG) \|\curl\uu\|_{U},
\end{equation*}
and \eqref{eq:transfo_C_PFW} follows from the definition of $C_{\rm PFW}(U,\Gamma)$.

(2) The maximum
value of the shape-regularity parameter $\kappa_\edge$ for all $\edge\in\EE_h$
implicitly constrains
the minimum angle possible between two faces of each tetrahedron in the edge patch
$\TT^\edge$. Therefore, there exists an integer $n(\kappa_\edge)$ such that
$|\TT^\edge| \leq n(\kappa_\edge)$. Moreover, there is a finite possibility for
choosing the Dirichlet faces composing $\GeD$.
As a result, there exists a finite set of pairs $\{(\widehat \TT,\widehat \Gamma)\}$
(where $\widehat\TT$ is a reference edge patch and $\widehat\Gamma$ is a (possibly
empty) collection of its boundary faces)
such that, for every $\edge\in \EE_h$, there is a pair
$(\widehat \TT,\widehat \Gamma)$ and a
bilipschitz, piecewise affine mapping satisfying
$\TTT_\edge: \widehat \omega \to \ome$ and $\TTT_\edge(\widehat \Gamma) = \GeD$,
where $\widehat \omega$ is the simply connected domain associated with
$\widehat \TT$. Step (1) above implies that
\begin{equation*}
\CPVe \leq
\max_{\widehat \xx \in \widehat \omega} \left (
\frac{\|\JJJ_\edge(\widehat \xx)\|^2}{|\det \JJJ_\edge(\widehat \xx)|}
\right )
C_{\rm PFW}(\widehat \omega,\widehat \Gamma),
\end{equation*}
where $\JJJ_\edge$ is the Jacobian matrix of $\TTT_\edge$.
Standard properties of affine mappings show that
\begin{equation*}
\max_{\widehat \xx \in \widehat \omega} \left (
\frac{\|\JJJ_\edge(\widehat \xx)\|^2}{|\det \JJJ_\edge(\widehat \xx)|}
\right )
=
\max_{K \in \TT^\edge}
\frac{h_{\widehat K}^2}{\rho_K^{2}}\frac{|K|}{|\widehat K|},
\end{equation*}
where $\widehat K=\TTT_\edge^{-1}(K)$ for all $K\in\TT_\edge$.
Since $|K| \leq h_K^3$, we have
\begin{equation*}
\max_{\widehat \xx \in \widehat \omega} \left (
\frac{\|\JJJ(\widehat \xx)\|^2}{|\det \JJJ(\widehat \xx)|}
\right )
\leq
\left (
\max_{\widehat K \in \widehat \TT} \frac{h_{\widehat K}^2}{|\widehat K|}
\right )
\kappa_\edge^2 h_{\ome}.
\end{equation*}
This concludes the proof.
\end{proof}

\end{document}